\newcommand{\Grad}{{\rm Grad\,}}
\newcommand{\Om}{\Omega}
\newcommand{\pa}{\partial}
\newcommand{\ov}{\overline}
\newcommand{\I}{{\rm Im}}
\newcommand{\Rt}{{\rm Re}}
\newcommand{\wid}{\widetilde}
\newcommand{\na}{\nabla}
\newcommand{\mat}{\mathbb}
\newcommand{\R}{{\mat R}}
\newcommand{\N}{{\mat N}}
\newcommand{\C}{{\mat C}}
\newcommand{\Sp}{{\mat S}}
\newcommand{\be}{\begin{eqnarray}}
\newcommand{\ben}{\begin{eqnarray*}}
\newcommand{\en}{\end{eqnarray}}
\newcommand{\enn}{\end{eqnarray*}}
\newtheorem{remark}[theorem]{Remark}
\begin{document}
\renewcommand{\theequation}{\arabic{section}.\arabic{equation}}

\title{\bf direct and inverse acoustic scattering by global rough surfaces}
\author{Chengyu Wu\thanks{School of Mathematics and Statistics, Xi'an Jiaotong University,
Xi'an 710049, Shaanxi, China ({\tt wucy99@stu.xjtu.edu.cn})}
\and
Jiaqing Yang\thanks{School of Mathematics and Statistics, Xi'an Jiaotong University,
Xi'an 710049, Shaanxi, China ({\tt jiaq.yang@xjtu.edu.cn})}
}
\date{}
\maketitle

%\vspace{.2in}

\begin{abstract}
  In this paper, we investigate on the forward and inverse acoustic scattering problem by an infinite rough interface in a lossless medium for both the cases that the real transmission coefficient $\mu\neq1$ and $\mu=1$. We first prove the existence of the unique solution in the bounded and continuous function space utilizing the integral equation method through an elaborate analysis, which also leads to the $L^p$ well-posedness. Then we carefully consider the singularity of the solutions to the problem with incident point source or hypersingular point source, where a simple and novel perspective is given for the derivation of the singularity. Finally, a global uniqueness theorem for the inverse problem is proven on the unique determination of the unbounded rough surface, the transmission coefficient and the wave number in the lower half plane from the measurements of the near field only on a line segment above the interface at a fixed frequency. 
\end{abstract}

\begin{keywords}
global rough surfaces, transmission scattering problem, boundary integral equations, well-posedness, singularity analysis, inverse scattering, global uniqueness. 
\end{keywords}

\begin{AMS}
35L05, 45E10, 78A46. 
\end{AMS}

\pagestyle{myheadings}
\thispagestyle{plain}
\markboth{C. Wu and J. Yang}{Direct and inverse scattering by unbounded penetrable rough surfaces}

\section{Introduction}\label{sec1}
\setcounter{equation}{0}
The present paper concerns the direct and inverse problem of scattering of time harmonic waves by an infinite rough interface. A transmission condition is imposed on the rough surface, and the medium above and below the rough surface are assumed to be homogeneous and isotropic. By the phrase rough surface, we denote a surface which is a (usually nonlocal) perturbation of an infinite plane surface such that the whole surface lies within a finite distance of the original plane. Rough surface scattering problems have arisen more and more interests in recent years due to their large number of applications in various fields, e.g., in optics, acoustics, geophysical exploration and underwater detection, and have been studied extensively through numerical and analytic methods (see, e.g., \cite{GP13,GP14,SC05,JA02,JGR16,XBH18,JA91,AG94,KW01} and the references quoted there). 

Till now, despite the numerous literatures on numerical algorithms, much less rigorous mathmatical results concerning the rough surface have been carried out. The strict analysis on the direct rough surface scattering problems can date back to the end of the last centery. 
%In 1995, Chandler-Wilde and Ross \cite{SC95} obtained a uniqueness result for the Dirichlet boundary value problem of the Helmholtz equation in an arbitrary unbounded lossy medium in $\R^n$ $(n\geq2)$. 
In 1996, Chandler-Wilde and Ross \cite{SC96} considered the Dirichlet boundary value problem for the Helmholtz equation in a nonlocally perturbed half-plane and proposed a new boundary integral equation formulation using the impedance Green's function for an half-plane. They proved that the integral equation is well-posed in the space of bounded and continuous functions with the boundary being both Lyapunov and a small perturbation of a flat boundary. Later in \cite{SB98}, Chandler-Wilde and Zhang proposed a radiation condition called the upward propagating radiation condition for the two-dimensional rough surface scattering problem, which generalizes both the classical Sommerfeld radiation condition and the Rayleigh expansion condition for diffraction gratings. They studied the aforementioned Dirichlet problem and proved the uniqueness of the solution when the boundary is piecewise Lyapunov. In addition, based on the previous results \cite{SC96}, they showed the existence of the solution satisfying the new radiation condition in the case that the whole surface is both Lyapunov and a small perturbation of a flat boundary. Chandler-Wilde, Ross and Zhang \cite{SCB99} then further proved the well-posedness of this Dirichlet problem with the upward propagating radiation condition for general Lyapunov rough surfaces and all wavenumbers empolying the results \cite{SBC00} on the solvability of integral equations on the real line. From then on, a rather complete theory about the integral equation method for two-dimensional rough surface scattering problem was established. Many other models concerning the rough surface and inhomogeneous layer were investigated (see, e.g.,  \cite{SNB98,BS98,SB99,DTS03,BS03}). 
%such as the scattering of electromagnetic waves by an inhomogeneous conducting or dielectric layer on a perfectly conducting plate \cite{SNB98}, and of acoustic waves by an inhomogeneous layer on a rigid plate \cite{BS98}, and of electromagnetic waves by rough interfaces and inhomogeneous layers \cite{SB99}, and of acoustic waves by a penetrable unbounded obstacle in piecewise homogeneous medium \cite{DTS03}, and of acoustic waves by an infinite impedance rough surfaces \cite{BS03}. 
These work showed the well-posedness of the corresponding problem through the integral equation method in a similar manner, which is followed by the present study, where the uniqueness was obtained by establishing the integral representation and some relating basic inequalities for the solutions and utlizing the technical lemma \cite[Lemma A]{SB98}, and the existence proof depended on the general results on the solvability of weakly singular second-kind integral equations on unbounded regions in \cite{SB97,SB02,SBC00}. It is noted that the same procedure can not deal with the three-dimensional rough surface scattering due to the slow decay at infinity of the fundamental solution of the three-dimensional Helmholtz equation, and we refer to \cite{SE06a,SE06b} for the integral equation method in the three-dimensional case. Moreover, the variational methods were developed in \cite{SJ10,SP05,SPM07,MT06,GXFB15,AS10,PJ12,GHPL19}, where the radiation condition angular spectrum representation is usually imposed, i.e., the solutions can be represented in integral form as a superposition of upward traveling and evanescent plane waves, which is known to be equivalent to the upward propagating radiation condition. We also refer to \cite{TT05} for some further discussion about these radiation conditions. 

It should be noticed that the majority of the preceding work are concerning the half-plane problems, while less attention has been paid to the whole space problems, which require a substantially more elaborate analysis due to the presence of the transmitted wave.  Particularly, \cite{SB99,AS10,MT06} were concerned with the scattering by inhomogeneous layer embedded in the whole space. In \cite{PJ12}, Li and Shen studied the scattering by a penetrable rough surface in a lossy medium. Bao et al. \cite{GHPL19} considered the model of scattering by an unbounded rough interface with an impenetrable obstacle, while the background medium is also supposed to be lossy. We further note that all these work were only considering the case of the transmission coefficient $\mu=1$. In other words, the solutions and their normal derivatives are assumed to be continuous across the penetrable interface, which would make the analysis much easier. In \cite{DTS03}, Natroshvili, Arens and Chandler-Wilde proved the well-posedness of a two-dimensional transmission problem with the transmission coefficient $\mu\neq1$ (actually here requires $\I\mu<0$) but only when the medium is lossy. Very few work have undertaken a rigorous theoretical study on the whole space problem with real transmission coefficient $\mu\neq1$ and lossless medium simultaneously, which is significant for both mathmatical and practical interests. Thus in this paper we aim to study the transmission scattering problem formulated in \cite{DTS03} with lossless medium and real transmission coefficient, where the cases $\mu=1$ and $\mu\neq1$ are both considered.  

In the present work, we obtain the existence of the unique solution for the aforementioned problem applying the theory developed by Chandler-Wilde and his collaborators in \cite{SNB98,BS98,SB99,DTS03,BS03,SC96,SB98,SCB99,SB97,SB02,SBC00}. The uniqueness for the forward problem is first proved under some a priori assumptions on the transmission coefficient and the wave numbers, which, to the best of the author's knowledge, is the first uniqueness result in the direct rough surface scattering for $\mu\neq1$ in a lossless medium. The scattering problem is then reduced to an equivalent boundary integral equation system, where the existence of the solution follows from certain properties of the relevant integral operators derived in \cite{DTS03}. Moreover, it shoule be emphasized that our work on the direct problem is not a trivial generalization to \cite{DTS03}, since the conditions $\I\mu<0$ and the lossy medium in \cite{DTS03} do simplify the mathmatical analysis a lot, and a more comprehensive consideration is needed here. After establishing the well-posedness, we turn to further study the property of the solutions to the transmission scattering problem with point source and hypersingular point source incident waves, which is well known that plays a key role in the inverse scattering problem (see, e.g., \cite{VI08,VI90,AR93}). It is a natural thought that the solutions have a same singularity as the incident point sources (or hypersingular point sources) with the source positions approaching the interface. Here we give the first rigorous proof for such thought. We propose a simple and novel perspective to show that the solutions restricted to the neighborhood of the source position and the interface do possess the same singularity as the singular incidence in the $H^1$ sense with a coefficient relative to the transmission coefficent $\mu$. Our method mainly based on the detailed analysis on the integral equations and can be applied to many other cases, such as the electromagnetic and elastic scattering whether by bounded regions or unbounded structures. This local property of the solutions is an important corollary to our well-posedness theorem and also plays a fundamental part in the uniqueness proof of the inverse problem. 

The second part of this paper is devoted to the inverse problem for uniquely determining the scatterer. There exist many uniqueness results on inverse scattering problems on periodic structures \cite{BS99,JB11,JB12,GB94,FA97} and local rough surfaces (or cavities) \cite{JJB22,GJP11,HB13,PL12}, which can be viewed as special cases of rough surfaces. However, the methods utilized in these cases usually rely heavily on the a priori knowledge on the surface at infinity and thus can not be extended to the general unbounded non-periodic rough surfaces. Due to the great difficulty, few related work are available on the uniqueness results for the inverse rough surface scattering. We refer to \cite{GL18,SC95,GH12} for the determination of impenetrable rough surfaces. As for penetrable rough surfaces, 
%in \cite{GH12} Hu also uniquely recovered the rough layers (with transmission coefficient $\mu\neq1$) from the scattered field with point source incidence. Lu and Zhang \cite{YB16} proved that the rough interface and the buried obstacle can be uniquely determined by the scattered near-field data due to point source waves. 
Bao et al. \cite{GHPL19} showed that the rough surface and the obstacle embedded in a lossy medium both can be uniquely identified by the measured fields corresponding to a single point incident wave. Different from the direct problem, the inverse problem becomes much harder when the transmission coefficient $\mu=1$, for which we note that many uniqueness results on inverse problems in the aforementioned work do not contain the case that $\mu=1$. A recent exception is  \cite{JBH18} by Yang et al., where the uniqueness of inverse scattering from bounded penetrable obstacles is proved under both the situations of $\mu=1$ and $\mu\neq1$. Their main idea is to construct a well-posed interior transmission problem in a small domain near the interface using the scattered fields corresponding to the incident point source or hypersingular point source, which motivates us to study the local property of these solutions in the rough surface case and also construct such interior transmission problem. 
%Then the local singularity of the solutions would contradict with the well-posedness of the interior transmission problem and a global uniqueness theorem follows. 
With these preparations, we prove that the entire scatterer, i.e., the unbounded penetrable rough surface, the transmission coefficient and the wave number below the surface can be uniquely determined by the measured near field only on a line segment above the rough surface. 

The remaining part of the paper is organized as follows. In the next section, we introduce the mathmatical formulation for the model transmission problem in details, and present some useful notations and inequalities. Based on the previous work on the upward (downward) propagating radiation condition, in section \ref{sec3} we obtain the uniqueness of the direct problem following the standard procedure but with a more elaborate analysis. Then in section \ref{sec4} we further investigate on the existence of the solution applying the boundary integral equations method. Section \ref{sec5} is devoted to derive the detailed singularity of the solutions with the point source or hypersingular point source incident waves, where a simple and novel perspective for the derivation is proposed. Finally, in section \ref{sec6} we prove a global uniqueness result for the inverse problem. We show that the whole scatterer system can be uniquely recovered from the knowledge of the near field at a fixed frequency.

\section{The model problem}\label{sec2}
\setcounter{equation}{0}
In this section, we present the mathmatical model of the forward transmission scattering problem, and introduce some notations and estimates used thoughout the paper.  

Denote by $B_r(x)$ the open disk centered at $x\in\R^2$ with radius $r>0$. For $h\in\R$, define $U_h^\pm:=\{x=(x_1,x_2)\in\R^2|x_2\gtrless h\}$ and $\Gamma_h:=\{x\in\R^2|x_2=h\}$. For $V\subset\R^n(n=1,2)$, denote by $BC(V)$ the set of functions bounded and continuous in $V$, which is a Banach space under the norm $\|\psi\|_{\infty,V}:=\sup_{x\in V}|\psi(x)|$. For simplicity, we abbreviate $\|\cdot\|_{\infty,\R^n}$ by $\|\cdot\|_{\infty}$. For $0<\alpha\leq1$, denote by $BC^{0,\alpha}(V)$ the Banach space of functions $\varphi\in BC(V)$ which are uniformly H\"{o}lder continuous with exponent $\alpha$, equipped with the norm 
$$\|\varphi\|_{0,\alpha,V}:=\|\varphi\|_{\infty,V}+\sup\limits_{x,y\in V,x\neq  y}\frac{|\varphi(x)-\varphi(y)|}{|x-y|^\alpha}.$$
For an open set $V\subset\R^2$ and $v\in L^\infty(V)$, denote by $\pa_jv$, $j=1,2$, the (distributional) derivative $\pa v(x)/\pa x_j$. Let $BC^1(V):=\{\varphi\in BC(V)|\pa_j\varphi\in BC(V),j=1,2\}$ under the norm $\|\varphi\|_{1,V}:=\|\varphi\|_{\infty,V}+\|\pa_1\varphi\|_{\infty,V}+\|\pa_2\varphi\|_{\infty,V}$. We further introduce some spaces of functions on the boundary $\Gamma$. Define $BC^{1,\alpha}(\Gamma):=\{\varphi\in BC(\Gamma)|\Grad\varphi\in BC^{0,\alpha}(\Gamma)\}$ with the norm $\|\varphi\|_{1,\alpha,\Gamma}:=\|\varphi\|_{\infty,\Gamma}+\|\Grad\varphi\|_{0,\alpha,\Gamma}$, where Grad is the surface gradient. 

Moreover, for a bounded domain $\Om\subset\R^2$ with a $C^2$ boundary, we introduce the Hilbert spaces  $H^1_\Delta(\Om)$ defined by 
\ben
H^1_\Delta(\Om):=\{u\in\mathcal{D}'(\Om)|u\in H^1(\Om),~\Delta u\in L^2(\Om)\} 
\enn
with the inner product 
\ben
(u,v)_{H^1_\Delta(\Om)}:=(u,v)_{H^1(\Om)}+(\Delta u,\Delta v)_{L^2(\Om)},~\forall u,v\in H^1_\Delta(\Om), 
\enn
where $\mathcal{D}'(\Om)$ denotes the set of distributions defined on $C_0^\infty(\Om)$. 

Given $f\in BC^{1,\alpha}(\R)$, $0<\alpha\leq1$, with $f^-:=\inf_{x_1\in\R}f(x_1)$ and $f^+:=\sup_{x_1\in\R}f(x_1)$, define $D^\pm$ by 
\ben
  &&D^+:=\{x=(x_1,x_2)\in\R^2|x_2>f(x_1)\},\\
  &&D^-:=\{x=(x_1,x_2)\in\R^2|x_2<f(x_1)\}, 
\enn
so that the interface $\Gamma$ is 
$$\pa D^+=\pa D^-=\Gamma:=\{x=(x_1,f(x_1))|x_1\in\R\}.$$
When we want to show the dependence of the regions and interface on the function $f$, we write $D^{\pm}_f$ and $\Gamma_f$. Denote by $n(x)=(n_1(x),n_2(x))$ the unit normal vector to $\Gamma$ at $x\in\Gamma$ pointing out of $D^+$, and $\pa_n=n_1\pa_1+n_2\pa_2$ and $\pa_\tau=n_2\pa_1-n_1\pa_2$ the usual normal and tangent derivatives on $\Gamma$. 

Now we formulate the interface problem which models the scattering of acoustic waves by the penetrable unbounded obstacle $D^-$. The incident plane wave $u^i(x)=e^{ik_+x\cdot d}$, $x\in\R^2$, with $d=(d_1,d_2)\in\Sp^1$ the propagation direction, will produce a scattered wave $u_+$ in $D^+$ and a transmitted wave $u_-$ in $D^-$. Note that one could also consider other types of incident waves, e.g., the so-called point source or hypersingular point source waves. The waves $u_\pm$ satisfy the Helmholtz equations in $D^\pm$, and the transmission boundary conditions on $\Gamma$. To be specific, the forward transmission scattering problem associated with the scatterer $(\Gamma,k_-,k_+,\mu)$ is modeled by 
\be\label{2.1}
\left\{
\begin{array}{ll}
	\Delta u_++k_+^2u_+=0~~~&{\rm in}~D^+,\\[2mm]
	\Delta u_-+k_-^2u_-=0~~~&{\rm in}~D^-,\\[2mm]
	u_++u^i=u_-~~~&{\rm on}~\Gamma,\\[2mm]
	\displaystyle\frac{\pa u_+}{\pa n}+\frac{\pa u^i}{\pa n}=\mu\frac{\pa u_-}{\pa n}~~~&{\rm on}~\Gamma, 
\end{array}
\right.
\en
where $k_\pm>0$ are the wave numbers and $\mu>0$ is the transmission coefficient. The direct problem is to determine the scattered and transmitted fields, while the inverse problem considered here is given $k_+$, determining the scatterer $(\Gamma,k_-,k_+,\mu)$ from the knowledge of the scattered wave $u_+$. 

%The study of the inverse problem based on the well-posedness of the forward problem in the first place. 
In order to guarantee the uniqueness of problem (\ref{2.1}), the scattered and transmitted waves have to satisfy additional conditions at infinity, which is the so-called upward (downward) propagating radiation condition - UPRC (DPRC). To this end, we introduce the following definitions. 

For $k>0$, denote by  $\Phi_{k}(x,y)$ the fundamental solution to the Helmholtz equation in $\R^2$, which is known as $$\Phi_{k}(x,y):=\frac{i}{4}H_0^{(1)}(k|x-y|), ~~~x\neq y, $$ with $H_0^{(1)}$ the Hankel function of the first kind of order zero. We further denote by $\Phi_0(x,y):=(1/2\pi)\ln(1/|x-y|)$ the fundamental solution to the Laplacian in $\R^2$. 
\begin{definition}\label{def2.1}
	Given a domain $G\subset\R^2$, call $v\in C^2(G)\cap L^\infty(G)$ a radiating solution of the Helmholtz equation in $G$ if $\Delta v+k^2v=0$ in $G$ and 
	\be\label{2.2}
	v(x)=O(r^{-1/2}),~~~\frac{\pa v(x)}{\pa r}-ikv(x)=o(r^{-1/2}),~r=|x|, 
	\en
	as $r$ tends to infinity, uniformly in $x/|x|$. 
\end{definition}

The conditions (\ref{2.2}) are just the classical Sommerfeld radiation condition. The set of radiating functions corresponding to the domain $G$ and the parameter $k$ is denoted by Som$(G,k)$. 

\begin{definition}{\rm(\cite{SB98,DTS03})}\label{def2.2}
	Given a domain $G\subset\R^2$, say that $v:G\rightarrow\C$, a solution of the Helmholtz equation $\Delta v+k^2v=0$ in $G$, satisfies the UPRC (DPRC) in $G$ if, for some $h\in\R$ and $\varphi\in L^\infty(\Gamma_h)$, it holds that $U_h^+\subset G$ $(U_h^-\subset G)$ and 
	\be\label{2.3}
	  v(x)=2\theta\int_{\Gamma_h}\frac{\pa\Phi_k(x,y)}{\pa y_2}\varphi(y)ds(y),~x\in U_h^+~(x\in U_h^-), 
	\en
	where $\theta=1$ for the UPRC and $\theta=-1$ for the DPRC. 
\end{definition}

Denote by UPRC$(G,k)$ (DPRC$(G,k)$)  the set of functions satisfying the UPRC (DPRC) in $G$ with the parameter $k$. 

\begin{remark}\label{remark2.3}
	For $x=(x_1,x_2)\in\R^2$ let $x'=(x_1,-x_2)$, and for $G\subset\R^2$ let $G'=\{x'|x\in G\}$. Then $v_-\in$DPRC$(G,k)$ if and only if $v_+\in$UPRC$(G,k)$, provided $v_+(x)=v_-(x')$, $x\in G'$. 
\end{remark}

 Note that the existence of the integral (\ref{2.3}) for arbitatry $\varphi\in L^\infty(\Gamma_h)$ is assured by the bound which follows from asymptotic behavior of the Hankel function for small and large real argument 
\be\label{2.4}
  \left|\frac{\pa\Phi_k(x,y)}{\pa y_2}\right|\leq C|x_2-y_2|\left(|x-y|^{-2}+|x-y|^{-3/2}\right),~x,y\in\R^2,~x\neq y, 
\en
where $C$ is a positive constant depending only on $k$. We further give the following result which states the properties of the UPRC (DPRC). 
\begin{lemma}{\rm(\cite{SB98})}\label{lem2.4}
	Given $H\in\R$ and $v:U_H^+\rightarrow\C$, the following statements are equivalent: 
	\begin{enumerate}[{\em (i)}]
		\item $v\in C^2(U_H^+)$, $v\in L^\infty(U_H^+\setminus U_a^+)$ for all $a>H$, $\Delta v+k^2v=0$ in $U_H^+$ and $v$ satisfies the UPRC in $U_H^+$; 
		\item there exists a sequence $\{v_n\}$ of radiaiting solutions such that $v_n(x)\rightarrow v(x)$ uniformly on compact subsets of $U_H^+$ and $$\sup\limits_{x\in U_H^+\setminus U_a^+, n\in\N}|v_n(x)|<+\infty$$ for all $a>H$; 
		\item $v$ satisfies {\rm (\ref{2.3})} for $h=H$ and some $\varphi\in L^\infty(\Gamma_h)$; 
		\item $v\in L^\infty(U_H^+\setminus U_a^+)$ for some $a>H$ and $v$ satisfies {\rm (\ref{2.3})} for each $h>H$ with $\varphi=v|_{\Gamma_h}$;
		\item $v\in C^2(U_H^+)$, $v\in L^\infty(U_H^+\setminus U_a^+)$ for all $a>H$, $\Delta v+k^2v=0$ in $U_H^+$, and, for every $h>H$ and radiating solution in $U_H^+$, $w$, such that the restriction of $w$ and $\pa_2 w$ to $\Gamma_h$ are in $L^1(\Gamma_h)$, it holds that $$\int_{\Gamma_h}\left(v\frac{\pa w}{\pa n}-w\frac{\pa v}{\pa n}\right)ds=0. $$
	\end{enumerate}
\end{lemma}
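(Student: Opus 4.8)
The plan is to establish the cycle of implications (iii) $\Rightarrow$ (iv) $\Rightarrow$ (i) $\Rightarrow$ (ii) $\Rightarrow$ (iii) (or a convenient permutation), together with (i) $\Leftrightarrow$ (v), so that all five statements become equivalent. Since the lemma is attributed to \cite{SB98}, most of the analytical machinery should already be in place there; the task here is to assemble it. I would begin with the most concrete equivalence, (iii) $\Leftrightarrow$ (iv): given (iii), i.e. the representation \eqref{2.3} for $h=H$, one first uses the bound \eqref{2.4} to see that the integral defines a bounded function on $U_a^+$ for every $a>H$, hence $v\in L^\infty(U_H^+\setminus U_a^+)$; then one invokes a uniqueness/consistency argument for such potential representations to propagate the formula to every plane $\Gamma_h$ with $h>H$ and identify the density as $\varphi=v|_{\Gamma_h}$. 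Conversely (iv) trivially contains (iii) after choosing one admissible $h$.

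Next I would treat (iii)/(iv) $\Rightarrow$ (i). Differentiating under the integral sign in \eqref{2.3} (justified again by \eqref{2.4} and the analogous bounds on higher derivatives of the Hankel function) shows $v\in C^2(U_H^+)$ and $\Delta v+k^2 v=0$ there, since $\Phi_k(x,\cdot)$ solves the Helmholtz equation away from the singularity and $x\in U_h^+$ stays away from $\Gamma_h$. The UPRC is then satisfied essentially by definition. For the reverse direction (i) $\Rightarrow$ (iii) one uses Green's representation theorem in the strip between $\Gamma_H$ (or $\Gamma_h$) and $\Gamma_A$ for large $A$, together with the fact that a bounded solution of the Helmholtz equation in $U_h^+$ whose data on $\Gamma_h$ lie in $L^\infty$ is recovered by the double-layer potential with density $v|_{\Gamma_h}$; the contribution from the upper boundary $\Gamma_A$ must be shown to vanish as $A\to\infty$, which uses the boundedness of $v$ together with the decay of $\partial_{y_2}\Phi_k$ in \eqref{2.4}.

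The equivalence with (ii) is obtained by approximation: given (i), one writes $v$ via \eqref{2.3} with density $\varphi=v|_{\Gamma_h}$, truncates the line of integration to $\Gamma_h\cap B_n(0)$, and notes that each truncated integral is a genuine radiating solution (a double-layer potential over a compact arc) converging to $v$ locally uniformly in $U_H^+$, with a uniform $L^\infty$ bound on $U_H^+\setminus U_a^+$ inherited from \eqref{2.4}; conversely, a locally uniform limit of radiating solutions with the stated uniform bound satisfies the UPRC by a normal-limit/dominated-convergence argument on \eqref{2.3}. Finally (i) $\Leftrightarrow$ (v): the displayed Green's identity in (v) with test function a radiating solution $w$ is exactly what the representation \eqref{2.3} encodes after taking $w=\Phi_k(x,\cdot)$ for $x\in U_H^+$ (a radiating solution in the relevant region), so (iii) forces the identity, and conversely the identity applied to the family $w=\Phi_k(x,\cdot)$ reconstructs \eqref{2.3}. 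The main obstacle throughout is the careful justification that all boundary contributions at $x_2\to+\infty$ vanish: this is where the asymptotics of the Hankel function and the boundedness hypotheses on $v$ must be combined quantitatively, and it is the only step that is not a formal manipulation — everything else reduces to differentiation under the integral sign and standard potential-theoretic identities.
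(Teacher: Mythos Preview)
The paper does not give its own proof of Lemma~\ref{lem2.4}: the lemma is quoted verbatim from \cite{SB98} (Chandler-Wilde and Zhang) and is used as a black box throughout, so there is no in-paper argument to compare your proposal against. Your sketch is a reasonable reconstruction of the strategy in \cite{SB98}, which indeed proceeds by cycling through the implications via potential-theoretic identities and the decay bound \eqref{2.4}.

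Two small points where your outline is slightly loose relative to what actually has to be done. First, you write that ``(iv) trivially contains (iii) after choosing one admissible $h$,'' but (iii) demands the representation at the boundary height $h=H$, whereas (iv) supplies it only for every $h>H$; pushing the formula down to $\Gamma_H$ requires a continuity/limiting argument and is not automatic. Second, in (i) $\Rightarrow$ (iii) you propose a Green's-formula argument in a strip with the top boundary sent to infinity, but (i) already \emph{includes} the UPRC, i.e.\ the representation \eqref{2.3} for \emph{some} $h\ge H$; the real content of (i) $\Rightarrow$ (iii) is therefore the same downward extension to $h=H$, not the vanishing of a far-field contribution. These are not fatal gaps---the argument in \cite{SB98} handles them---but they are precisely the places where the work is, and your sketch treats them as formalities.
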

\begin{remark}\label{remark2.5}
	The above lemma implies that any radiating solution satisfies the UPRC (DPRC), thus this radiation condition is in some extents the generalization of the Sommerfeld radiation condition. 
\end{remark}

Now we formulate the previous transmission scattering problem (\ref{2.1}) as follows (see, e.g., \cite{DTS03}). 

{\bf Transmission Scattering Problem} (TSP). Let $\alpha\in(0,1)$. Given $g_1\in BC^{1,\alpha}(\Gamma)$ and $g_2\in BC^{0,\alpha}(\Gamma)$, determine a pair of functions $(u_+,u_-)$ with $u_+\in C^2(D^+)\cap BC^1(\overline{D^+}\setminus U_{h_+}^+)$ $(h_+>f^+)$ and $u_-\in C^2(D^-)\cap BC^1(\ov{D^-}\setminus U_{h_-}^-)$ $(h_-<f^-)$ such that the following hold: 
\begin{enumerate}[(i)]
	\item $u_\pm$ is the solution of the Helmholtz equation $\Delta u_\pm+k_\pm^2u_\pm=0$ in $D^\pm$; 
	\item $u_+-u_-=g_1$, $\pa_n u_+-\mu\pa_n u_-=g_2$ on $\Gamma$;
	\item $\sup\limits_{x\in D^\pm}|x_2|^\beta|u_\pm(x)|<\infty$ for some $\beta\in\R$;
	\item $u_+\in$UPRC$(D^+,k_+)$ and $u_-\in$DPRC$(D^-,k_-)$. 
\end{enumerate}

Note that the TSP has been studied in {\rm \cite{DTS03}} for the case of $k_+>0$, $\Rt k_->0$, $\I k_->0$, $\Rt\mu>0$, $\I\mu<0$ and $\I(\mu k_-^2)>0$. In this paper, we aim to further establish the well-posedness of the TSP for real $k_\pm$ and $\mu$.

To this end, we first introduce some basic notations and inequalities. By the asymptotic behavior of the Hankel function, we know that 
\begin{align}\label{2.5}
  |\Phi_k(x,y)|\;&\leq C|x-y|^{-1/2},\\ \label{2.6}
  \left|\frac{\pa\Phi_k(x,y)}{\pa x_i}\right|\;&\leq C|x_i-y_i||x-y|^{-3/2},~~i=1,2, 
\end{align}
for $|x-y|\geq\delta>0$ with $C>0$ depending only on $k,\delta$. 

Let $x,y\in U_a^\pm$, $a\in\R$ and $y_a'=(y_1,2a-y_2)$. Denote by $G_k^{\pm(\mathcal{D})}(x,y;a)$ and $G_k^{\pm\mathcal{(I)}}(x,y;a)$ the Dirichlet Green's function and the impedance Green's function for the Helmholtz operator $\Delta+k^2$ in the half planes $U_a^\pm$. It is well known that (c.f., \cite{SCB99,BS03,SB98}) 
\begin{align}\label{2.7}
  G_k^{\pm(\mathcal{D})}(x,y;a)&:=\Phi_{k}(x,y)-\Phi_{k}(x,y_a'),~~x,y\in U_a^\pm, \\ \label{2.8}
  G_k^{\pm(\mathcal{I})}(x,y;a)&:=\Phi_k(x,y)+\Phi_k(x,y_a')+P_k^\pm(x-y_a'),~~x,y\in U_a^\pm, 
\end{align}
where 
$$P_k^\pm(z):=\frac{|z|e^{ik|z|}}{\pi}\int_{0}^{\infty}\frac{t^{-1/2}e^{-k|z|t}[|z|\pm z_2(1+it)]}{\sqrt{t-2i}[|z|t-i(|z|\pm z_2)]^2}dt,~~z\in\ov U_0^\pm, $$
with the square root being taken so that $-\pi/2<\arg\sqrt{t-2i}<0$. By \cite{SCB99}, we know that $P_k^\pm\in C(\ov U_0^\pm)\cap C^\infty(\ov U_0^\pm\setminus\{0\})$. Moreover,  $G_k^{\pm(\mathcal{D})}(x,y;a)$ and $G_k^{\pm\mathcal{(I)}}(x,y;a)$ are radiating in $U_a^\pm$ and 
\be\label{2.9}
&&G_k^{\pm(\mathcal{D})}(x,y;a)=0~~\text{for}~~x\in\Gamma_a, \\ \label{2.10}
&&\frac{\pa}{\pa x_2}G_k^{\pm\mathcal{(I)}}(x,y;a)\pm ikG_k^{\pm\mathcal{(I)}}(x,y;a)=0~~\text{for}~~x\in\Gamma_a. 
\en
Furthermore, for $G(x,y)\in\{G_k^{\pm(\mathcal{D})}(x,y;a), G_k^{\pm\mathcal{(I)}}(x,y;a)\}$ there hold the following inequalities (for details see \cite{SB98, SCB99,SC96}) 
\be\label{2.11}
  \begin{aligned}
  	&|G(x,y)|,~|\na_xG(x,y)|,~|\na_yG(x,y)|\\
	&\quad\leq C\frac{(1+|x_2|)(1+|y_2|)}{|x-y|^{3/2}}~~\text{for}~~|x-y|\geq1, \\ 
	&|G(x,y)|\leq C(1+|\log|x-y||)~~\text{for}~~0<|x-y|\leq1, \\
	&|\na_xG(x,y)|,~|\na_yG(x,y)|\leq C|x-y|^{-1}~~\text{for}~~0<|x-y|\leq1, \\
 	&|G(x,y)|,~|\na_x G(x,y)|,~|\na_y G(x,y)|, |\na_x\pa_{n(y)}G(x,y)|\\
  	&\quad\leq C_1[1+|x_1-y_1|]^{-3/2}~~\text{for}~~x\in\Gamma_H,y\in\Gamma, |x_2-y_2|\geq\delta>0, 
  \end{aligned}
\en
where $C>0$ depending on $a$ and $k$, and $C_1>0$ depending on $a$, $k$, $\delta$, $\Gamma$ and $H$. 

Denote by $G^{(\pm)}(x,y)$, the generalized Dirichlet Green's functions for the domains $D^\pm$: 
\ben
  &&G^{(-)}(x,y)=G_{k_-}^{-(\mathcal{D})}(x,y;h_+)-V^{(-)}(x,y),~~y,x\in\ov{D^-}, \\
  &&G^{(+)}(x,y)=G_{k_+}^{+(\mathcal{D})}(x,y;h_-)-V^{(+)}(x,y),~~y,x\in\ov{D^+}, 
\enn
where $V^{(\pm)}(x,y)$ is a solution of the Helmholtz equation $(\Delta+k_\pm^2)u=0$ in $D^\pm$ satisfying the UPRC (DPRC) and the boundary condition 
\ben
  &&V^{(-)}(x,y)=G_{k_-}^{-(\mathcal{D})}(x,y;h_+),~~y\in D^-,~x\in\Gamma, \\
  &&V^{(+)}(x,y)=G_{k_+}^{+(\mathcal{D})}(x,y;h_-),~~y\in D^+,~x\in\Gamma. 
\enn 
Due to the results in \cite{SCB99,SB98}, the functions $V^{(\pm)}(x,y)$ and $G^{(\pm)}(x,y)$ are uniquely determined and radiating, and admit some bounds similar to (\ref{2.11}) (see \cite{DS03})
\be\label{2.12}
\begin{aligned}
	&|G^{(\pm)}(x,y)|,~|\na_xG^{(\pm)}(x,y)|,~|\na_yG^{(\pm)}(x,y)|\\
	&\quad\leq C_\pm^*\frac{(1+|x_2|)(1+|y_2|)}{|x-y|^{3/2}}~~\text{for}~~|x-y|\geq1, \\ 
	&|G^{(\pm)}(x,y)|\leq C_\pm^*(1+|\log|x-y||)~~\text{for}~~0<|x-y|\leq1, \\
	&|\na_xG^{(\pm)}(x,y)|,~|\na_yG^{(\pm)}(x,y)|\leq C_\pm^*|x-y|^{-1}~~\text{for}~~0<|x-y|\leq1, \\
	&|G^{(\pm)}(x,y)|,~|\na_x G^{(\pm)}(x,y)|,~|\na_y G^{(\pm)}(x,y)|, |\na_x\pa_{n(y)}G^{(\pm)}(x,y)|\\
	&\quad\leq C_\pm^{**}[1+|x_1-y_1|]^{-3/2}~~\text{for}~~x\in\Gamma_{a_\pm},y\in\Gamma, a_-<f^-<f^+<a_+, 
\end{aligned}
\en
where $C_\pm^*>0$ depending on $h_\pm$ and $k_\pm$, and $C_\pm^{**}>0$ depending on $h_\pm$, $k_\pm$, $a_\pm$, $\Gamma$. 

For later use, we here give an integral representation  and a local regularity estimate for the solutions. 
\begin{lemma}{\rm (\cite[Lemma 2.4]{DTS03})}\label{lem2.6}
	Let $u_\pm\in C^2(D^\pm)\cap C^1(\ov{D^\pm})$ be a solution to the equation $(\Delta+k_\pm^2)u_\pm=0$ in $D^\pm$ with $u_+$ and $u_-$ satsifying the UPRC and DPRC, respectively. Then $$u_\pm(x)=\pm\int_{\Gamma}\frac{\pa G^{(\pm)}(x,y)}{\pa n(y)}u_\pm(y)ds(y),~~x\in D^\pm. $$
\end{lemma}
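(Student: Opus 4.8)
The plan is to establish the identity as a Green's representation formula built on the generalized Dirichlet Green's function. What makes this work is that $G^{(\pm)}(x,\cdot)$ is radiating in $D^\pm$, solves $(\Delta+k_\pm^2)G^{(\pm)}(x,\cdot)=-\delta_x$ there, vanishes as soon as one of its arguments lies on $\Gamma$ (by the symmetry $G^{(\pm)}(x,y)=G^{(\pm)}(y,x)$), and at $y=x$ carries exactly the logarithmic singularity of $\Phi_{k_\pm}(x,\cdot)$ modulo a term of class $C^1$ near $y=x$. Accordingly I would apply Green's second identity to $u_\pm$ and $y\mapsto G^{(\pm)}(x,y)$ on a large bounded piece of $D^\pm$ with a small disk about $y=x$ excised, and then let the truncation recede and the disk shrink.

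I would do this in detail for $D^+$ and obtain the $D^-$ case either by rerunning the argument with the DPRC in place of the UPRC, or at once from the reflection in Remark~\ref{remark2.3}. Fix $x\in D^+$ and choose $h$ larger than $f^+$, than $x_2$, and than the height associated with the UPRC of $u_+$, so that Lemma~\ref{lem2.4}(v) is available for $u_+$ on $\Gamma_h$; for $R>|x_1|$ and small $\epsilon>0$ set
\[
\Omega_{R,\epsilon}:=\{y\in D^+:\ y_2<h,\ |y_1|<R\}\setminus\ov{B_\epsilon(x)}.
\]
Both $u_+$ and $G^{(+)}(x,\cdot)$ solve $(\Delta+k_+^2)\cdot=0$ on $\Omega_{R,\epsilon}$ and are regular enough there ($u_+\in C^2(D^+)\cap C^1(\ov{D^+})$, $G^{(+)}(x,\cdot)\in C^2(\ov{\Omega_{R,\epsilon}})$), so the volume term in Green's second identity drops out and only the boundary contributions over $\Gamma\cap\{|y_1|<R\}$, over $\Gamma_h\cap\{|y_1|<R\}$, over the two vertical segments in $\{|y_1|=R\}$, and over $\pa B_\epsilon(x)$ survive.

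These are then disposed of one by one. Along $\Gamma$ the term $G^{(+)}(x,y)\,\pa_{n(y)}u_+(y)$ drops because $G^{(+)}(x,y)=0$ there, leaving only $\int_{\Gamma\cap\{|y_1|<R\}}u_+\,\pa_{n(y)}G^{(+)}(x,\cdot)\,ds$. On the two vertical segments, whose length is at most $h-f^-$, the far-field bounds (\ref{2.12}) give $|G^{(+)}(x,y)|+|\na_yG^{(+)}(x,y)|\lesssim|R-x_1|^{-3/2}$, so, using also that $u_+$ and $\na u_+$ are bounded on the strip $\{f^-<y_2<h\}$ (a consequence of the UPRC and the assumed regularity), these integrals tend to $0$ as $R\to\infty$. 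In that limit the $\Gamma_h$-contribution becomes $\int_{\Gamma_h}\big(u_+\,\pa_2G^{(+)}(x,\cdot)-G^{(+)}(x,\cdot)\,\pa_2u_+\big)\,ds$, which vanishes by Lemma~\ref{lem2.4}(v): since $x_2<h$, $G^{(+)}(x,\cdot)$ is a genuine radiating solution in $U_h^+$, and its trace and normal-derivative trace on $\Gamma_h$ lie in $L^1(\Gamma_h)$ thanks to the $[1+|x_1-y_1|]^{-3/2}$ decay in (\ref{2.12}). Finally, writing $G^{(+)}(x,y)=-(2\pi)^{-1}\ln|x-y|+O(1)$ and $\na_yG^{(+)}(x,y)=-(2\pi)^{-1}(y-x)|y-x|^{-2}+O(|x-y|\,|\log|x-y||)$ near $y=x$, the standard computation on $\pa B_\epsilon(x)$ shows its contribution tends to $u_+(x)$ as $\epsilon\to0^+$ (the $G^{(+)}\,\pa_\nu u_+$ part being $O(\epsilon|\log\epsilon|)$). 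Collecting the surviving terms, and reading off the sign from the fact that the outward normal of $\Omega_{R,\epsilon}$ along $\Gamma$ is $n$ (which points out of $D^+$), gives the representation for $u_+$; for $u_-$ the outward normal of $D^-$ along $\Gamma$ is $-n$, which flips the sign, while the DPRC analogue of Lemma~\ref{lem2.4}(v) kills the corresponding $\Gamma_h$-integral.

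The hard part will be the rigorous handling of the contributions ``from infinity'': showing that the integrals over the receding vertical truncations go to zero and that the $\Gamma_h$-integral is \emph{exactly} zero in the limit. The first is an estimate combining the uniform far-field decay (\ref{2.11})--(\ref{2.12}) of $G^{(\pm)}$ with the regularity and controlled growth of $u_\pm$ on bounded horizontal strips; the second is precisely the orthogonality relation of Lemma~\ref{lem2.4}(v) (and its DPRC version), which makes it essential that $G^{(\pm)}(x,\cdot)$ really be radiating away from $y=x$ and regular up to $\Gamma$ — properties guaranteed by the construction of $V^{(\pm)}$ and the estimates quoted from \cite{SCB99,SB98}. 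Everything else — that the excision limit is legitimate under the modest hypothesis $u_\pm\in C^1(\ov{D^\pm})$, and that the integral over $\Gamma$ converges absolutely — is routine given (\ref{2.12}) and the continuity of $u_\pm$.
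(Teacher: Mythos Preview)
Your argument is essentially correct and is the standard route to such a representation: Green's second identity on a truncated strip with a disk excised about $x$, the Dirichlet condition $G^{(\pm)}(x,\cdot)|_\Gamma=0$ kills one boundary term, the far-field decay (\ref{2.12}) kills the vertical cuts, Lemma~\ref{lem2.4}(v) kills the $\Gamma_h$-contribution, and the local expansion of $G^{(\pm)}$ recovers $u_\pm(x)$ from the shrinking circle. One small point worth tightening: to invoke Lemma~\ref{lem2.4}(v) you need $G^{(+)}(x,\cdot)$ to be a radiating solution in some $U_H^+$ containing $\Gamma_h$, which forces $H>x_2$; you should also pick $H>f^+$ so that $u_+$ satisfies the UPRC there. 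Your choice of $h$ already accommodates this, but it is worth making the intermediate $H$ explicit. Similarly, the claim that $u_+$ and $\nabla u_+$ are bounded on the strip does not follow from $u_+\in C^1(\ov{D^+})$ alone; you are implicitly using that the UPRC forces $u_+\in L^\infty$ on horizontal slabs (Lemma~\ref{lem2.4}(i)) and then Lemma~\ref{lem2.7} for the gradient.

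As for comparison with the paper: the paper does not give its own proof of Lemma~\ref{lem2.6} at all --- it simply quotes the result from \cite[Lemma~2.4]{DTS03}. What you have written is, modulo the minor points above, exactly the argument one finds in that reference (and in the earlier half-plane analogues \cite{SB98,SCB99}), so there is no genuine methodological difference to report.
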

\begin{lemma}{\rm (\cite[Theorem 3.9 and Lemma 4.1]{DN98})}\label{lem2.7}
	If $G\subset\R^2$ is open and bounded, $v\in L^\infty(G)$, and $\Delta v=f\in L^\infty(G)$ (in a distributional sense) then $v\in C^1(G)$ and 
	$$|\na v(x)|\leq C(d(x))^{-1}(\|v\|_{\infty,G}+\|d^2f\|_{\infty,G}),~~~x\in G, $$
	where $C$ is a positive constant and $d(x)=d(x,\pa G)$. 
\end{lemma}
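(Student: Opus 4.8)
The plan is to exploit that the estimate is purely local: I would fix $x_0\in G$, set $r:=d(x_0)/2$, rescale $v$ to the unit disk, and reduce the assertion to a single fixed-scale interior estimate, which I would then prove by splitting the rescaled function into a Newtonian volume potential and a harmonic remainder. Rescaling is exactly what keeps the constant independent of $d(x_0)$ and, more importantly, kills the logarithmic factor that a direct two-dimensional potential estimate on $B_{d(x_0)}(x_0)$ would produce.

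\emph{Reduction by rescaling.} Since $B_{2r}(x_0)=B_{d(x_0)}(x_0)\subset G$, the function $\tilde v(z):=v(x_0+rz)$ is well defined on $B_1(0)$ and satisfies $\Delta\tilde v=\tilde f$ distributionally on $B_1(0)$, where $\tilde f(z):=r^2 f(x_0+rz)$. For $z\in B_1(0)$ the point $x_0+rz$ lies in $B_r(x_0)$, so $d(x_0+rz)\ge d(x_0)-r=r$ (the distance function being $1$-Lipschitz), whence $|\tilde f(z)|\le (d(x_0+rz))^2|f(x_0+rz)|\le\|d^2f\|_{\infty,G}$, and similarly $\|\tilde v\|_{\infty,B_1(0)}\le\|v\|_{\infty,G}$. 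Granting the fixed-scale bound
\[
|\na\tilde v(0)|\le C_0\big(\|\tilde v\|_{\infty,B_1(0)}+\|\tilde f\|_{\infty,B_1(0)}\big)
\]
with an absolute constant $C_0$, the chain rule gives $\na_z\tilde v(0)=r\,\na v(x_0)$, hence $|\na v(x_0)|=r^{-1}|\na_z\tilde v(0)|\le 2C_0\,d(x_0)^{-1}(\|v\|_{\infty,G}+\|d^2f\|_{\infty,G})$, which is the claim with $C=2C_0$.

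\emph{The fixed-scale estimate.} On $B_1(0)$ write $\tilde v=w+h$, where $w(z):=-\int_{B_1(0)}\Phi_0(z,\xi)\tilde f(\xi)\,d\xi$, so that $\Delta w=\tilde f$ in $B_1(0)$ (recall $\Delta_z\Phi_0(z,\xi)=-\delta(z-\xi)$). Since $\Phi_0$ and $\na\Phi_0$ have the locally integrable singularities $|\log|\cdot||$ and $|\cdot|^{-1}$ in $\R^2$, differentiation under the integral together with a routine continuity argument gives $w\in C^1(\R^2)$ and
\[
\|w\|_{\infty,B_1(0)}+\|\na w\|_{\infty,B_1(0)}\le C\|\tilde f\|_{\infty,B_1(0)},
\]
the logarithmic integral contributing only an absolute constant because the radius is now fixed. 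The remainder $h:=\tilde v-w\in L^\infty(B_1(0))$ satisfies $\Delta h=0$ distributionally, so by Weyl's lemma it is a genuine harmonic, hence $C^\infty$, function, and the mean value property applied to $\pa_i h$ yields $|\na h(0)|\le C\|h\|_{\infty,B_{1/2}(0)}\le C(\|\tilde v\|_{\infty,B_1(0)}+\|w\|_{\infty,B_1(0)})$. Summing the two contributions proves the fixed-scale bound. Running the same decomposition around an arbitrary point of $G$ also shows that $v$ is locally the sum of a $C^1$ potential and a smooth harmonic function, so $v\in C^1(G)$.

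\emph{Where the difficulty sits.} No step is deep; the only genuine point is the bookkeeping of constants. Estimating the volume potential directly on $B_{d(x_0)}(x_0)$ gives $\|w\|_{\infty}\lesssim d(x_0)^2(1+|\log d(x_0)|)\|f\|_{\infty}$, and after inserting $\|f\|_{\infty,B_{d(x_0)/2}(x_0)}\le 4\,d(x_0)^{-2}\|d^2f\|_{\infty,G}$ one is left with a spurious factor $1+|\log d(x_0)|$. Passing to the unit disk, where every potential-theoretic integral is an absolute constant, removes this factor, and homogeneity then reproduces exactly the weight $d(x_0)^{-1}$. The distributional hypothesis on $\Delta v$ is dealt with once and for all by Weyl's lemma when identifying $\tilde v-w$ as a classical harmonic function. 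This is the content of \cite[Theorem 3.9 and Lemma 4.1]{DN98}.
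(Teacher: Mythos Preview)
The paper does not supply a proof of this lemma at all: it is merely quoted from Gilbarg--Trudinger \cite{DN98} and used as a black box. So there is no ``paper's own proof'' to compare against beyond the implicit reference to the standard argument in that book.

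Your argument is correct and is, in fact, essentially the way the result is obtained in \cite{DN98}: interior gradient bounds for harmonic functions (their Theorem~3.9) combined with the $C^1$ regularity and pointwise bounds for the Newtonian potential of a bounded density (their Lemma~4.1), glued together exactly via the decomposition $\tilde v=w+h$ you wrote down. Your emphasis on rescaling to the unit disk to avoid the spurious logarithmic factor is the right organizing principle and makes the dependence on $d(x_0)$ transparent. One cosmetic remark: when you invoke the mean value property for $\partial_i h$ to bound $|\nabla h(0)|$ by $\|h\|_{\infty,B_{1/2}(0)}$, you are really using the interior derivative estimate for harmonic functions (e.g., the Poisson kernel or the mean value formula for $h$ itself followed by differentiation under the integral), since $\partial_i h$ is harmonic but its mean value gives $\partial_i h(0)$, not a bound in terms of $\|h\|_\infty$ directly; this is of course exactly Theorem~3.9 in \cite{DN98} and is harmless.
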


\section{Uniqueness of solution}\label{sec3}
\setcounter{equation}{0}
In this section, we will establish the uniqueness theorem for the TSP with real coefficients $k_\pm,\mu>0$. To the best of author's knowledge, this is the first uniqueness result covering both the cases that $\mu\neq1$ and the medium is lossless. 
  
Before going further, we introduce some notations used in the remaining part of the paper. For $A>0$, $h_-<f^-$ and $h_+>f^+$ define 
\ben
  &&D^\pm(A):=\{x\in D^\pm||x_1|<A\},~~~\Gamma_h(A):=\{x\in\Gamma_h||x_1|<A\}, \\
  &&\Gamma(A):=\{x\in\Gamma||x_1|<A\},~~~D^-_{h_-}:=D^-\setminus\ov{U_{h_-}^-}=U_{h_-}^+\setminus\ov{D^+}, \\
  &&D^+_{h_+}:=D^+\setminus\ov{U_{h_+}^+}=U_{h_+}^-\setminus\ov{D^-},~~~D^\pm_{h_\pm}(A):=\{x\in D^\pm_{h_\pm}||x_1|<A\}, \\
  &&\gamma_+(\pm A):=\{x\in D^+_{h_+}|x_1=\pm A\},~~~\gamma_-(\pm A):=\{x\in D^-_{h_-}|x_1=\pm A\}. 
\enn
In the uniqueness proof we also utilize the following two lemmas. 
\begin{lemma}{\rm (\cite[Lemma 5.2]{SB99})}\label{lem3.1}
	If $\phi_\pm\in L^2(\Gamma_h)\cap L^\infty(\Gamma_h)$, and $v_\pm$ are defined by {\rm (\ref{2.3})} with the densities $\phi_\pm$ and $v_-,v_+$ satisfying the DPRC, UPRC, respectively. Then the restrictions of $v_-,\pa_1v_-$ and $\pa_2v_-$ to $\Gamma_a$ are in $L^2(\Gamma_a)\cap BC(\Gamma_a)$ for $a<h$; the restrictions of $v_+,\pa_1v_+$ and $\pa_2v_+$ to $\Gamma_b$ are in $L^2(\Gamma_b)\cap BC(\Gamma_b)$ for $b>h$; and 
	\begin{align}\label{3.1}
	  &\I\int_{\Gamma_a}\ov v_-\pa_2v_-ds\leq0,~~~~\Rt\int_{\Gamma_a}\ov v_-\pa_2v_-ds\geq0, \\ \label{3.2}
	  &\int_{\Gamma_a}[|\pa_2v_-|^2-|\pa_1v_-|^2+k^2|v_-|^2]ds\leq-2k\I\int_{\Gamma_a}\ov v_-\pa_2v_-ds, 
	\end{align}
	and 
	\begin{align}\label{3.3}
	&\I\int_{\Gamma_b}\ov v_+\pa_2v_+ds\geq0,~~~~\Rt\int_{\Gamma_b}\ov v_+\pa_2v_+ds\leq0, \\ \label{3.4}
	&\int_{\Gamma_b}[|\pa_2v_+|^2-|\pa_1v_+|^2+k^2|v_+|^2]ds\leq2k\I\int_{\Gamma_b}\ov v_+\pa_2v_+ds. 
	\end{align}
\end{lemma}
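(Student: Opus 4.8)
The plan is to pass from the double-layer representation \eqref{2.3} to the angular spectrum (plane-wave) representation of $v_\pm$ on horizontal lines and then to extract every assertion from Plancherel's theorem. First I would note that, by Remark \ref{remark2.3}, it suffices to treat $v_+\in\,$UPRC$(U_h^+,k)$ with density $\phi:=\phi_+\in L^2(\Gamma_h)\cap L^\infty(\Gamma_h)$: the reflection $x\mapsto x'$ carries a DPRC function on $U_h^-$ to a UPRC function on $U_{-h}^+$ with a density in $L^2\cap L^\infty$ on $\Gamma_{-h}$, maps $\Gamma_a$ with $a<h$ to $\Gamma_{-a}$ with $-a>-h$, and turns $\pa_2$ into $-\pa_2$, so that \eqref{3.1}--\eqref{3.2} become precisely \eqref{3.3}--\eqref{3.4} and the $\Gamma_a$-regularity becomes the $\Gamma_b$-regularity.

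Next I would establish, for $x=(x_1,x_2)\in U_h^+$, the representation
\[
v_+(x)=\frac{1}{2\pi}\int_\R\hat\phi(\xi)\,e^{i\xi x_1+i\gamma(\xi)(x_2-h)}\,d\xi,
\]
where $\hat\phi$ is the Fourier transform of $\phi(\cdot,h)$ along $\Gamma_h$ and $\gamma(\xi):=\sqrt{k^2-\xi^2}$ is chosen with $\Rt\gamma,\I\gamma\ge0$. For fixed $x$ with $x_2>h$ the kernel $y_1\mapsto\pa\Phi_k(x,(y_1,h))/\pa y_2$ lies in $L^1(\R)\cap L^2(\R)$ by \eqref{2.4}, and the Weyl (Sommerfeld) integral for $\Phi_k$ shows its $y_1$-Fourier transform to be a constant multiple of $e^{-i\xi x_1+i\gamma(\xi)(x_2-h)}$; the convolution theorem then applies to the (absolutely convergent, by the $L^\infty$-hypothesis) $y_1$-integral in \eqref{2.3} and yields the displayed formula, with $\|\hat\phi\|_{L^2}$ controlled by $\|\phi\|_{L^2(\Gamma_h)}$. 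Granting this, for $b>h$ the trace $v_+(\cdot,b)$ has Fourier transform $\hat\phi(\xi)m_b(\xi)$, $m_b(\xi):=e^{i\gamma(\xi)(b-h)}$, so $|m_b|\equiv1$ on $\{|\xi|<k\}$ and $|m_b(\xi)|=e^{-\sqrt{\xi^2-k^2}(b-h)}$ on $\{|\xi|>k\}$. Hence $\widehat{v_+(\cdot,b)}$, $i\xi\,\widehat{v_+(\cdot,b)}$ and $i\gamma(\xi)\widehat{v_+(\cdot,b)}$ all belong to $L^1(\R)\cap L^2(\R)$ (compactly supported and dominated by $(1+k)|\hat\phi|$ on $\{|\xi|<k\}$; exponentially damped, with $\hat\phi\in L^2$, on $\{|\xi|>k\}$), and inverting the Fourier transform gives $v_+(\cdot,b),\pa_1v_+(\cdot,b),\pa_2v_+(\cdot,b)\in L^2(\Gamma_b)\cap BC(\Gamma_b)$, which is the regularity assertion.

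Then I would read off the inequalities by Parseval on $\Gamma_b$, using $\widehat{\pa_2v_+(\cdot,b)}=i\gamma(\xi)\widehat{v_+(\cdot,b)}$ and $\widehat{\pa_1v_+(\cdot,b)}=i\xi\,\widehat{v_+(\cdot,b)}$:
\[
\int_{\Gamma_b}\ov{v_+}\,\pa_2v_+\,ds=\frac{1}{2\pi}\int_\R i\gamma(\xi)\,|\hat\phi(\xi)|^2|m_b(\xi)|^2\,d\xi,
\]
whose imaginary part equals $\frac{1}{2\pi}\int_{|\xi|<k}\sqrt{k^2-\xi^2}\,|\hat\phi|^2\,d\xi\ge0$ (there $i\gamma$ is purely imaginary with positive imaginary part) and whose real part equals $-\frac{1}{2\pi}\int_{|\xi|>k}\sqrt{\xi^2-k^2}\,|\hat\phi|^2|m_b|^2\,d\xi\le0$ (there $i\gamma=-\sqrt{\xi^2-k^2}$); this is \eqref{3.3}. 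Similarly, $|\gamma(\xi)|^2-\xi^2+k^2$ equals $2(k^2-\xi^2)$ on $\{|\xi|<k\}$ and vanishes on $\{|\xi|>k\}$, so
\[
\int_{\Gamma_b}\big[|\pa_2v_+|^2-|\pa_1v_+|^2+k^2|v_+|^2\big]\,ds=\frac{1}{2\pi}\int_{|\xi|<k}2(k^2-\xi^2)\,|\hat\phi(\xi)|^2\,d\xi,
\]
and comparing with $2k\,\I\int_{\Gamma_b}\ov{v_+}\pa_2v_+\,ds=\frac{1}{2\pi}\int_{|\xi|<k}2k\sqrt{k^2-\xi^2}\,|\hat\phi|^2\,d\xi$ through the elementary bound $\sqrt{k^2-\xi^2}\le k$ gives \eqref{3.4}. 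The statements \eqref{3.1}--\eqref{3.2} for $v_-$ follow from the reflection described at the start.

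The part I expect to be the main obstacle is the passage from \eqref{2.3} to the plane-wave integral — the interchange of the $\xi$- and $y_1$-integrations — since the $\xi$-integral representing $\pa\Phi_k/\pa y_2$ is only oscillatory (conditionally convergent) on $\{|\xi|<k\}$, so Fubini cannot be applied naively. The way around it is to argue at $x_2>h$ strictly, where the double-layer kernel is honestly in $L^1\cap L^2$ as a function of $y_1$ and has an $L^1$ Fourier transform, so that the convolution/Plancherel identities are rigorously justified; once the representation is in hand, everything is the bookkeeping of constants and the split $|\xi|<k$ versus $|\xi|>k$.
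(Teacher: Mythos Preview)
Your proposal is correct and follows essentially the same route as the paper: the paper does not reprove the lemma but cites \cite[Lemma~6.1]{SNB98} for the $v_+$ statements and invokes Remark~\ref{remark2.3} for the $v_-$ statements, and the angular-spectrum/Plancherel computation you outline is precisely the argument behind that cited result. Your handling of the reflection to obtain \eqref{3.1}--\eqref{3.2} from \eqref{3.3}--\eqref{3.4} matches the paper's remark verbatim, and your caution about justifying the passage from \eqref{2.3} to the plane-wave integral (working strictly at $x_2>h$, where the double-layer kernel is in $L^1\cap L^2$ in $y_1$ and has Fourier transform $\propto e^{i\gamma(\xi)(x_2-h)}\in L^1\cap L^\infty$, so that the convolution theorem applies with $\phi\in L^2$) is exactly the right way to make the step rigorous.
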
 

%Note that Lemma 6.1 in \cite{SNB98} only contains the conclusion about $v_+$ in Lemma \ref{lem3.1}. The statements concerning $v_-$ follow from Remark \ref{remark2.3}.  

\begin{lemma}{\rm (\cite[Lemma 6.2]{SNB98})}\label{lem3.2}
	Suppose that $F\in L^\infty(\R)$ and that, for some non-negative constants $C,\varepsilon$ and $A_0$, 
	$$\int_{-A}^{A}|F|^2\leq C\int_{\R\setminus[-A,A]}G_A^2+C\int_{-A}^{A}(G_\infty-G_A)G_\infty+\varepsilon,~~~A>A_0, $$
	where, for $A_0<A\leq+\infty$, 
	$$G_A(s)=\int_{-A}^{A}(1+|s-t|)^{-3/2}|F(t)|dt,~~s\in\R. $$
	Then $F\in L^2(\R)$ and 
	$$\int_{-\infty}^{+\infty}|F|^2\leq\varepsilon. $$
\end{lemma}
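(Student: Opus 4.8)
Write $K(u):=(1+|u|)^{-3/2}$ and $\ka:=\Int_{\R}K(u)\,du$ (a finite constant), so that $G_A=K*\big(|F|\chi_{[-A,A]}\big)$ and $G_\infty=K*|F|$, where $\chi$ denotes the indicator function. Since $0\leq|F|\in L^\infty(\R)$, $G_\infty$ is well defined, $0\leq G_A\leq G_{A'}\leq G_\infty$ for $A\leq A'$, $G_A\to G_\infty$ pointwise as $A\to\infty$, and $\|G_\infty\|_{\infty}\leq\ka\|F\|_{\infty}$. Put $I(A):=\Int_{-A}^{A}|F|^2$; this is nondecreasing and, trivially, $I(A)\leq 2A\|F\|_{\infty}^2<\infty$ for each $A$. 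The plan consists of two steps: (i) prove $\sup_{A>A_0}I(A)<\infty$, i.e. $F\in L^2(\R)$; (ii) pass to the limit $A\to\infty$ in the hypothesis and conclude $\Int_\R|F|^2\leq\varepsilon$.

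Step (i) I would carry out as a bootstrap in $A$. By the Cauchy--Schwarz inequality $G_A(s)^2\leq\ka\Int_{-A}^{A}K(s-t)|F(t)|^2\,dt$, so by Fubini's theorem $\Int_{\R\se[-A,A]}G_A^2\leq\ka\Int_{-A}^{A}|F|^2 w_A$, with $w_A(t):=\Int_{\R\se[-A,A]}K(s-t)\,ds\leq 4\,(1+\mathrm{dist}(t,\{-A,A\}))^{-1/2}$; splitting $[-A,A]$ into the bulk $\{|t|\leq A-R\}$, where $w_A$ is as small as we like for $R$ large, and two strips of fixed width $R$, where $|F|^2\leq\|F\|_{\infty}^2$ and the measure is $2R$, bounds $\Int_{\R\se[-A,A]}G_A^2$ by a small multiple of $I(A)$ plus a constant depending only on $R$ and $\|F\|_{\infty}$. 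For the mixed term, set $H_A:=G_\infty-G_A\geq0$ and decompose $\Int_{-A}^{A}(G_\infty-G_A)G_\infty=\Int_{-A}^{A}H_AG_A+\Int_{-A}^{A}H_A^2$; Cauchy--Schwarz and Fubini give $\Int_{-A}^{A}H_A^2\leq\Int_{|t|>A}|F|^2 v_A$ with $v_A(t)=\Int_{-A}^{A}w_A(s)K(s-t)\,ds\leq C(1+|t|-A)^{-1}$ for $A<|t|\leq 2A$ and $v_A(t)\leq C\sqrt A\,|t|^{-3/2}$ for $|t|>2A$, while by Young's inequality $\Int_{-A}^{A}H_AG_A\leq\|G_A\|_{L^2(\R)}\big(\Int_{-A}^{A}H_A^2\big)^{1/2}\leq\ka\,I(A)^{1/2}\big(\Int_{-A}^{A}H_A^2\big)^{1/2}$. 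Using only $|F|\leq\|F\|_{\infty}$ and the decay of $K$, a dyadic splitting of $\{|t|>A\}$ yields $\Int_{-A}^{A}H_A^2\leq C'\|F\|_{\infty}^2(1+\log A)$; inserting all of this into the hypothesis, applying Young's inequality once more to the term $\ka\,I(A)^{1/2}(\,\cdot\,)^{1/2}$, and absorbing a small multiple of $I(A)$ on the left, gives the preliminary bound $I(A)\leq C''\|F\|_{\infty}^2(1+\log A)$ for all $A>A_0$.

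This logarithmic bound is now self-improved. It forces the masses of $|F|^2$ near the endpoints $\pm A$ to decay: for $1\leq 2^j\leq A$ one has $\Int_{A+2^j\leq|t|<A+2^{j+1}}|F|^2=I(A+2^{j+1})-I(A+2^j)=O(2^j/A)$, and $\Int_{2^mA\leq|t|<2^{m+1}A}|F|^2=O(1)$ for $m\geq1$. Re-running the estimate of $\Int_{-A}^{A}H_A^2$ with these refined bounds in place of $|F|\leq\|F\|_{\infty}$ gives $\Int_{-A}^{A}H_A^2=O\big((\log A)/A\big)\to0$, hence $\Int_{-A}^{A}H_AG_A\leq\ka\,I(A)^{1/2}\,O\big(((\log A)/A)^{1/2}\big)=O\big((\log A)\,A^{-1/2}\big)\to0$, so the whole mixed term tends to $0$. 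The hypothesis then collapses to $I(A)\leq\tfrac12 I(A)+C_\ast\|F\|_{\infty}^2+o(1)+\varepsilon$, and since $I$ is nondecreasing this gives $\sup_{A>A_0}I(A)=\lim_{A\to\infty}I(A)\leq 2C_\ast\|F\|_{\infty}^2+2\varepsilon<\infty$, i.e. $F\in L^2(\R)$. Step (ii) is then routine: $F\in L^2(\R)$ and Young's inequality give $G_\infty\in L^2(\R)$ and $\|G_\infty-G_A\|_{2}=\|K*(|F|\chi_{\R\se[-A,A]})\|_{2}\leq\ka\big(\Int_{|t|>A}|F|^2\big)^{1/2}\to0$, so $\Int_{\R\se[-A,A]}G_A^2\leq 2\Int_{\R\se[-A,A]}G_\infty^2+2\|G_\infty-G_A\|_{2}^2\to0$ and $\Int_{-A}^{A}(G_\infty-G_A)G_\infty\leq\|G_\infty-G_A\|_{2}\|G_\infty\|_{2}\to0$; since $I(A)\to\Int_\R|F|^2$ by monotone convergence, passing to the limit $A\to\infty$ in the hypothesis yields $\Int_\R|F|^2\leq\varepsilon$.

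The main obstacle is step (i). A priori the two right-hand integrals involve $F$ over all of $\R$, so one cannot simply let $A\to\infty$, and a single application of Cauchy--Schwarz only produces the logarithmically growing bound $I(A)=O(\log A)$. The decisive point is the self-improvement step, in which this bound is fed back to show that the masses of $|F|^2$ near $\pm A$ decay, which forces the mixed term to vanish and upgrades the estimate to one uniform in $A$; keeping careful track of the auxiliary weights $w_A$ and $v_A$ through the dyadic splittings is the technical heart of the argument.
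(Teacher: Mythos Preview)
The paper does not prove this lemma itself; it is quoted verbatim from \cite{SNB98}. Your Step~(ii) and the preliminary estimate $I(A)=O(\log A)$ are both fine, but the self-improvement step contains a genuine gap.

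You assert that the bound $I(B)\le C''\log B$ (valid for all $B>A_{0}$) forces
\[
\int_{A+2^{j}\le|t|<A+2^{j+1}}|F|^{2}
   \;=\;I(A{+}2^{j+1})-I(A{+}2^{j})
   \;=\;O\!\Big(\frac{2^{j}}{A}\Big),\qquad 1\le 2^{j}\le A,
\]
and $\int_{2^{m}A\le|t|<2^{m+1}A}|F|^{2}=O(1)$ for $m\ge1$. Neither follows. A one-sided upper bound $I(B)\le C''\log B$ on a nondecreasing function gives no control of its increments: subtracting would require a matching \emph{lower} bound on $I(A+2^{j})$, which you do not have. Concretely, if $|F|^{2}=\|F\|_{\infty}^{2}\sum_{n\ge1}\chi_{[2^{n},\,2^{n}+1]}$, then $I(B)\asymp\log B$, yet the mass on each shell $[2^{n},2^{n}+1]$ is of order~$1$, not $O(2^{-n})$. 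Without the increment bounds your re-estimation of $\int_{-A}^{A}H_{A}^{2}$ does not improve on $O(\log A)$, so the upgrade of $I(A)$ to a uniform-in-$A$ bound collapses.

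The device used in \cite{SNB98} to break this circularity is \emph{not} to bound increments but to average the hypothesis in $A$. After your Cauchy--Schwarz step one has, for instance,
\[
\int_{\R\se[-A,A]}G_{A}^{2}\;\le\;\ka\int_{-A}^{A}|F(t)|^{2}\,(1+A-|t|)^{-1/2}\,dt,
\]
and the weight integrates nicely in $A$: $\int_{|t|}^{2B}(1+A-|t|)^{-1/2}\,dA\le 2(1+2B-|t|)^{1/2}\le C\sqrt{B}$. Integrating the hypothesis over $A\in[B,2B]$, using $I(A)\ge I(B)$ on the left, and treating the mixed term in the same way yields a recursion of the form $I(B)\le c\,B^{-1/2}I(2B)+c'$. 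Iterating along $B,2B,4B,\ldots$ and invoking only the trivial bound $I(2^{N}B)\le 2^{N+1}B\|F\|_{\infty}^{2}$ then gives $\sup_{B}I(B)<\infty$, since $\prod_{k<N}c(2^{k}B)^{-1/2}$ decays super-exponentially in $N$. Once $F\in L^{2}(\R)$ is established, your Step~(ii) is correct and completes the proof.
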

\begin{theorem}\label{thm3.3}
	Suppose $(u_+,u_-)$ is the solution to the homogeneous TSP, i.e., $g_1=g_2=0$, then $u_\pm=0$ in $D^\pm$, provided $(\Gamma,k_-,k_+,\mu)$ satisfying $(\mu-1)(k_+^2-k_-^2\mu)\geq0$ and $k_+^2\neq k_-^2\mu$. 
\end{theorem}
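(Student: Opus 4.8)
The plan is to follow the classical Rellich-type argument for rough-surface scattering, adapted to the transmission condition with a genuine coefficient $\mu \neq 1$. First I would fix $A>0$ and apply Green's first identity to $u_+$ on the bounded domain $D^+_{h_+}(A)$ and to $u_-$ on $D^-_{h_-}(A)$, producing boundary integrals over $\Gamma(A)$, over the horizontal cuts $\Gamma_{h_\pm}(A)$, and over the vertical sides $\gamma_\pm(\pm A)$. On $\Gamma(A)$ the homogeneous transmission conditions $u_+ = u_-$ and $\partial_n u_+ = \mu\,\partial_n u_-$ let me combine the two identities: the $\Gamma$-contributions collapse, up to the factor $\mu$, into $\int_{\Gamma(A)}(\overline{u_-}\,\partial_n u_+ - \tfrac{1}{\mu}\, u_+\,\partial_n\overline{u_+})$-type terms, and the key algebraic point is that taking imaginary parts kills the self-interaction on $\Gamma$ \emph{only} when the combination is arranged using the real constants $k_\pm,\mu$ correctly. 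What survives after taking imaginary parts is, schematically,
\begin{eqnarray*}
\I\!\int_{\Gamma_{h_+}(A)}\!\overline{u_+}\,\partial_2 u_+\,ds
\;-\;\frac{1}{\mu}\,\I\!\int_{\Gamma_{h_-}(A)}\!\overline{u_-}\,\partial_2 u_-\,ds
\;+\;(\text{vertical-side terms})\;=\;0 .
\end{eqnarray*}

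Next I would use the growth bound (iii) in TSP together with the representation formula (Lemma \ref{lem2.6}) and the Green's-function estimates (\ref{2.12}) to show that the vertical-side integrals $\int_{\gamma_\pm(\pm A)}(\cdots)$ tend to zero along a suitable sequence $A=A_n\to\infty$; this is the standard device of integrating the side contribution in $A$ and invoking that $u_\pm,\nabla u_\pm$ are $L^2$ in the $x_2$-direction on bounded $x_1$-strips (a consequence of Lemma \ref{lem3.1} applied on $\Gamma_{h_\pm}$, since the UPRC/DPRC give $u_\pm|_{\Gamma_{h_\pm}}\in L^\infty$, and Lemma \ref{lem2.7} upgrades interior regularity). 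Passing to the limit, I get
\begin{eqnarray*}
\I\!\int_{\Gamma_{h_+}}\!\overline{u_+}\,\partial_2 u_+\,ds
\;=\;\frac{1}{\mu}\,\I\!\int_{\Gamma_{h_-}}\!\overline{u_-}\,\partial_2 u_-\,ds .
\end{eqnarray*}
Now Lemma \ref{lem3.1} gives $\I\int_{\Gamma_{h_+}}\overline{u_+}\,\partial_2 u_+\geq 0$ and $\I\int_{\Gamma_{h_-}}\overline{u_-}\,\partial_2 u_-\leq 0$; since $\mu>0$, both sides must vanish. Here is where the sign hypothesis $(\mu-1)(k_+^2-k_-^2\mu)\ge 0$ enters: before discarding the \emph{real} parts of the $\Gamma$-interaction I would need to show the leftover real-part term, which carries a factor proportional to $(\mu-1)$ times a quantity controlled by $k_+^2-k_-^2\mu$ (arising when I eliminate $\partial_n u_-$ in favour of $\partial_n u_+$ and use the PDEs to trade $|\nabla u_\pm|^2$ against $k_\pm^2|u_\pm|^2$), has a definite sign consistent with the inequalities above, so that it too is forced to zero rather than spoiling the balance. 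The strict condition $k_+^2\neq k_-^2\mu$ is what prevents this term from being identically degenerate and is what ultimately yields a strict inequality to exploit.

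Once $\I\int_{\Gamma_{h_\pm}}\overline{u_\pm}\,\partial_2 u_\pm\,ds = 0$, inequalities (\ref{3.2}) and (\ref{3.4}) in Lemma \ref{lem3.1} give $\int_{\Gamma_{h_+}}[\,|\partial_2 u_+|^2 - |\partial_1 u_+|^2 + k_+^2|u_+|^2\,]\,ds\le 0$ and the analogous bound below. Combined with the same Green's-identity bookkeeping and the vanishing of the oscillatory parts, this forces $u_\pm|_{\Gamma_{h_\pm}}\in L^2$ and then, via Lemma \ref{lem3.2} applied to $F = u_+|_{\Gamma_{h_+}}$ (with $\varepsilon=0$, the quadratic self-convolution bound coming from (\ref{2.4})/(\ref{2.12})), that $u_+\equiv 0$ on $\Gamma_{h_+}$; likewise $u_-\equiv 0$ on $\Gamma_{h_-}$. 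Feeding these back into the UPRC/DPRC representation (\ref{2.3}), or into Lemma \ref{lem2.6}, gives $u_\pm\equiv 0$ in the half-planes $U_{h_+}^+$, $U_{h_-}^-$, and then unique continuation for the Helmholtz equation (together with the transmission conditions) propagates the zero through all of $D^\pm$. I expect the main obstacle to be the second paragraph's sign analysis: correctly tracking how the factor $(\mu-1)$ and the quantity $k_+^2-k_-^2\mu$ emerge when one couples the two Green's identities across $\Gamma$ with $\mu\neq1$, and checking that the residual real-part term is sign-definite in exactly the direction the hypothesis provides — in the lossy case of \cite{DTS03} this term is handled for free by $\I\mu<0$, so a genuinely new real-variable estimate is needed here.
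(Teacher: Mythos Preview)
Your outline misses the central technical device of the proof: the Rellich-type identity obtained by multiplying the Helmholtz equations by a weight $\varphi(x_2)\,\partial_2\overline{u_\pm}$ and integrating (the paper's (3.11)--(3.13)). It is \emph{this} identity, not the ordinary Green's identity, that produces on $\Gamma(A)$ the two terms
\[
-\int_{\Gamma(A)}(k_+^2-k_-^2\mu)\,\varphi\,n_2\,|u_-|^2\,ds
\quad\text{and}\quad
-\int_{\Gamma(A)}(\mu-1)\,\varphi\,n_2\bigl(\mu|\partial_n u_-|^2+|\partial_\tau u_-|^2\bigr)\,ds,
\]
and the hypothesis $(\mu-1)(k_+^2-k_-^2\mu)\ge 0$, $k_+^2\ne k_-^2\mu$ is used precisely to choose the linear weight $\varphi(t)=2(t-h'_\pm)$ so that both terms have the right sign and yield an upper bound for $K_A=\int_{\Gamma(A)}|u_-|^2$. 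Your description (``eliminate $\partial_n u_-$ \ldots\ trade $|\nabla u_\pm|^2$ against $k_\pm^2|u_\pm|^2$'') does not generate these boundary terms; the plain Green's identity puts $k_\pm^2|u_\pm|^2$ in the \emph{volume}, not on $\Gamma$. Without (3.14) you have no control of $u_\pm|_\Gamma$, and the imaginary-part balance you derive only forces the \emph{propagating} part of the angular spectrum to vanish --- evanescent modes survive, so $u_+|_{\Gamma_{h_+}}=0$ does not follow.

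There are two further structural issues. First, you apply Lemma~\ref{lem3.1} directly to $u_\pm$ on $\Gamma_{h_\pm}$ and then let $A\to\infty$; but Lemma~\ref{lem3.1} requires the density in (\ref{2.3}) to lie in $L^2\cap L^\infty$, i.e.\ $u_\pm|_{\Gamma_{h_\pm}}\in L^2(\Gamma_{h_\pm})$, which is exactly what you are trying to prove --- this is circular. The paper avoids this by introducing the \emph{truncated} potentials $v_\pm$ in (3.15), whose densities are compactly supported, applying Lemma~\ref{lem3.1} to $v_\pm$, and then using Lemma~\ref{lem3.2} as a bootstrap. Second, Lemma~\ref{lem3.2} is applied in the paper to $F=u_-|_\Gamma$ (the trace on the \emph{interface}), not to $u_+|_{\Gamma_{h_+}}$; this is because the representation of Lemma~\ref{lem2.6} expresses $u_\pm$ via their $\Gamma$-traces, and the kernel bounds (\ref{2.12}) give exactly the $(1+|x_1-y_1|)^{-3/2}$ decay that Lemma~\ref{lem3.2} needs. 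Once $u_\pm|_\Gamma=0$, Lemma~\ref{lem2.6} gives $u_\pm\equiv 0$ immediately, with no unique continuation step required.
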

\begin{proof}
	Applying Green's first theorem to $u_\pm$ and $\ov u_\pm$ in $D^\pm_{h_\pm}(A)$ yields
	\begin{align}\label{3.5}\nonumber
	  \int_{D^-_{h_-}(A)}\left(|\na u_-|^2-k_-^2|u_-|^2\right)dx\;&=-\int_{\Gamma(A)}\ov u_-\pa_nu_-ds-\int_{\Gamma_{h_-}(A)}\ov u_-\pa_2u_-ds\\
	  &\quad+\left(\int_{\gamma_-(A)}-\int_{\gamma_-(-A)}\right)\ov u_-\pa_1u_-ds, \\\nonumber\label{3.6}
	  \int_{D^+_{h_+}(A)}\left(|\na u_+|^2-k_+^2|u_+|^2\right)dx\;&=\int_{\Gamma(A)}\ov u_+\pa_nu_+ds+\int_{\Gamma_{h_+}(A)}\ov u_+\pa_2u_+ds\\
	  &\quad+\left(\int_{\gamma_+(A)}-\int_{\gamma_+(-A)}\right)\ov u_+\pa_1u_+ds. 
	\end{align}
	Multiplying (\ref{3.5}) by $\mu$, adding to (\ref{3.6}), using the transmission boundary conditions and taking the imaginary part of the equation, we obtain that 
	\begin{align}\label{3.7}\nonumber
	  &\;\quad\mu\I\int_{\Gamma_{h_-}(A)}\ov u_-\pa_2u_-ds-\I\int_{\Gamma_{h_+}(A)}\ov u_+\pa_2u_+ds \\
	  &=\mu\I\left(\int_{\gamma_-(A)}-\int_{\gamma_-(-A)}\right)\ov u_-\pa_1u_-ds+\I\left(\int_{\gamma_+(A)}-\int_{\gamma_+(-A)}\right)\ov u_+\pa_1u_+ds.~ 
	\end{align}
	
	Let $\theta\in C^2[h_-,h_+]$ be a real valued function to be decided. Apply Green's first theorem to $u_\pm$ and $\theta(x_2)\ov u_\pm$ in $D^\pm_{h_\pm}(A)$ and take the real part of the results to derive 
	\begin{align}\label{3.8}\nonumber
	  &\quad-\int_{\Gamma(A)}\theta'(x_2)n_2|u_-|^2ds+2\Rt\int_{\Gamma(A)}\theta(x_2)\ov u_-\pa_nu_-ds \\ \nonumber
	  &=-\int_{D^-_{h_-}(A)}2\theta(x_2)|\na u_-|^2dx+\int_{D^-_{h_-}(A)}(2k_-^2\theta(x_2)+\theta''(x_2))|u_-|^2dx \\ \nonumber
	  &\quad-2\theta(h_-)\Rt\int_{\Gamma_{h_-}(A)}\ov u_-\pa_2u_-ds+2\Rt\left(\int_{\gamma_-(A)}-\int_{\gamma_-(-A)}\right)\theta(x_2)\ov u_-\pa_1u_-ds \\
	  &\quad+\theta'(h_-)\int_{\Gamma_{h_-}(A)}|u_-|^2ds, 
	\end{align}
	and
	\begin{align}\label{3.9}\nonumber
	  &\quad\int_{\Gamma(A)}\theta'(x_2)n_2|u_+|^2ds-2\Rt\int_{\Gamma(A)}\theta(x_2)\ov u_+\pa_nu_+ds \\ \nonumber
	  &=-\int_{D^+_{h_+}(A)}2\theta(x_2)|\na u_+|^2dx+\int_{D^+_{h_+}(A)}(2k_+^2\theta(x_2)+\theta''(x_2))|u_+|^2dx \\ \nonumber
	  &\quad+2\theta(h_+)\Rt\int_{\Gamma_{h_+}(A)}\ov u_+\pa_2u_+ds+2\Rt\left(\int_{\gamma_+(A)}-\int_{\gamma_+(-A)}\right)\theta(x_2)\ov u_+\pa_1u_+ds \\
	  &\quad-\theta'(h_+)\int_{\Gamma_{h_+}(A)}|u_+|^2ds. 
	\end{align}
	Then it follows from the boundary conditions that 
	\begin{align}\label{3.10}\nonumber
	  &\quad-\int_{\Gamma(A)}(\mu-1)\theta'(x_2)n_2|u_-|^2ds \\ \nonumber
	  &=-\mu\int_{D^-_{h_-}(A)}2\theta(x_2)|\na u_-|^2dx+\mu\int_{D^-_{h_-}(A)}(2k_-^2\theta(x_2)+\theta''(x_2))|u_-|^2dx \\\nonumber
	  &\quad-\int_{D^+_{h_+}(A)}2\theta(x_2)|\na u_+|^2dx+\int_{D^+_{h_+}(A)}(2k_+^2\theta(x_2)+\theta''(x_2))|u_+|^2dx \\ \nonumber
	  &\quad-2\mu\theta(h_-)\Rt\int_{\Gamma_{h_-}(A)}\ov u_-\pa_2u_-ds+2\mu\Rt\left(\int_{\gamma_-(A)}-\int_{\gamma_-(-A)}\right)\theta(x_2)\ov u_-\pa_1u_-ds \\ \nonumber
	  &\quad+2\theta(h_+)\Rt\int_{\Gamma_{h_+}(A)}\ov u_+\pa_2u_+ds+2\Rt\left(\int_{\gamma_+(A)}-\int_{\gamma_+(-A)}\right)\theta(x_2)\ov u_+\pa_1u_+ds \\ 
	  &\quad+\mu\theta'(h_-)\int_{\Gamma_{h_-}(A)}|u_-|^2ds-\theta'(h_+)\int_{\Gamma_{h_+}(A)}|u_+|^2ds. 
	\end{align}
	
	Denote by $\varphi\in C^1[h_-,h_+]$ a real valued function at our disposal. Multiplying the equations in (\ref{2.1}) by $2\varphi(x_2)\pa_2\ov u_\pm$, integrating over $D^\pm_{h_\pm}(A)$, and taking the real part, we yield that 
	\begin{align}\label{3.11}\nonumber
	  &\quad\int_{\Gamma(A)}\varphi(x_2)[2\Rt(\pa_nu_-\pa_2\ov u_-)-n_2|\na u_-|^2+k_-^2n_2|u_-|^2]ds\\ \nonumber
	  &=-\varphi(h_-)\int_{\Gamma_{h_-}(A)}(|\pa_2u_-|^2-|\pa_1u_-|^2+k_-^2|u_-|^2)ds-\int_{D^-_{h_-}(A)}\varphi'(x_2)|\pa_2u_-|^2dx \\ \nonumber
	  &\quad+\Rt\left(\int_{\gamma_-(A)}-\int_{\gamma_-(-A)}\right)2\varphi(x_2)\pa_1u_-\pa_2\ov u_-ds+\int_{D^-_{h_-}(A)}\varphi'(x_2)|\pa_1 u_-|^2dx \\
	  &\quad-\int_{D^-_{h_-}(A)}k_-^2\varphi'(x_2)|u_-|^2dx, 
	\end{align}
	and 
	\begin{align}\label{3.12}\nonumber
	&\quad-\int_{\Gamma(A)}\varphi(x_2)[2\Rt(\pa_nu_+\pa_2\ov u_+)-n_2|\na u_+|^2+k_+^2n_2|u_+|^2]ds\\ \nonumber
	&=\varphi(h_+)\int_{\Gamma_{h_+}(A)}(|\pa_2u_+|^2-|\pa_1u_+|^2+k_+^2|u_+|^2)ds-\int_{D^+_{h_+}(A)}\varphi'(x_2)|\pa_2u_+|^2dx \\ \nonumber
	&\quad+\Rt\left(\int_{\gamma_+(A)}-\int_{\gamma_+(-A)}\right)2\varphi(x_2)\pa_1u_+\pa_2\ov u_+ds+\int_{D^+_{h_+}(A)}\varphi'(x_2)|\pa_1 u_+|^2dx \\
	&\quad-\int_{D^+_{h_+}(A)}k_+^2\varphi'(x_2)|u_+|^2dx. 
	\end{align}
	Since $u_+=u_-$, $\pa_nu_+=\mu\pa_nu_-$ on $\Gamma$, we have $\pa_2(u_+-u_-)=n_2\pa_n(u_+-u_-)$ and $|\na u_+|^2=\mu^2|\pa_n u_-|^2+|\pa_\tau u_-|^2$ on $\Gamma$. Thus we deduce that 
	\begin{align}\label{3.13}\nonumber
      &\quad-\int_{\Gamma(A)}(k_+^2-k_-^2\mu)\varphi(x_2)n_2|u_-|^2ds-\int_{\Gamma(A)}(\mu-1)\varphi(x_2)n_2(\mu|\pa_nu_-|^2+|\pa_\tau u_-|^2)ds \\ \nonumber
      &=-\mu\varphi(h_-)\int_{\Gamma_{h_-}(A)}(|\pa_2u_-|^2-|\pa_1u_-|^2+k_-^2|u_-|^2)ds-\mu\int_{D^-_{h_-}(A)}\varphi'(x_2)|\pa_2u_-|^2dx \\ \nonumber 
      &\quad+\varphi(h_+)\int_{\Gamma_{h_+}(A)}(|\pa_2u_+|^2-|\pa_1u_+|^2+k_+^2|u_+|^2)ds-\int_{D^+_{h_+}(A)}\varphi'(x_2)|\pa_2u_+|^2dx \\ \nonumber
      &\quad+\mu\Rt\left(\int_{\gamma_-(A)}-\int_{\gamma_-(-A)}\right)2\varphi(x_2)\pa_1u_-\pa_2\ov u_-ds+\mu\int_{D^-_{h_-}(A)}\varphi'(x_2)|\pa_1 u_-|^2dx \\ \nonumber
      &\quad+\Rt\left(\int_{\gamma_+(A)}-\int_{\gamma_+(-A)}\right)2\varphi(x_2)\pa_1u_+\pa_2\ov u_+ds+\int_{D^+_{h_+}(A)}\varphi'(x_2)|\pa_1 u_+|^2dx \\ 
      &\quad-\mu\int_{D^-_{h_-}(A)}k_-^2\varphi'(x_2)|u_-|^2dx-\int_{D^+_{h_+}(A)}k_+^2\varphi'(x_2)|u_+|^2dx. 
	\end{align}
	
	For $A>0$, $h_-<f^-$ and $h_+>f^+$, define 
	\ben
	  &&J_{\pm,A}=\I\int_{\Gamma_{h_\pm}(A)}\ov u_\pm\pa_2u_\pm ds,~~~L_{\pm,A}=\Rt\int_{\Gamma_{h_\pm}(A)}\ov u_\pm\pa_2u_\pm ds, \\
	  &&I_{\pm,A}=\int_{\Gamma_{h_\pm}(A)}(|\pa_2u_\pm|^2-|\pa_1u_\pm|^2+k_\pm
	  ^2|u_\pm|^2)ds,~~~K_A=\int_{\Gamma(A)}|u_-|^2ds. 
	\enn
	
	Now we set $\varphi(t)=2(t-h_+')$ if $1\geq\mu>0$, $k_-^2\mu>k_+^2$, and set $\varphi(t)=2(t-h_-')$ if $\mu\geq1$, $k_-^2\mu<k_+^2$, where $h_-<h_-'<f^-$ and $f^+<h_+'<h_+$. We further let $\theta(t)=1$. Also we note that $-n_2=(1+|f'|^2)^{-1/2}\geq\varepsilon$ on $\Gamma$ for some $\varepsilon>0$ since $f\in BC^1(\R)$. Then combinbing (\ref{3.10}) and (\ref{3.13}), by calculation it yields that 
	\be\label{3.14}
	  K_A\leq C_1I_{+,A}+C_2I_{-,A}+C_3L_{+,A}-C_4L_{-,A}+C_5(R_-(A)+R_+(A)), 
	\en
	where 
	\ben
	 R_\pm(A)=\left(\int_{\gamma_\pm(A)}+\int_{\gamma_\pm(-A)}\right)\left(|\pa_1u_\pm\pa_2\ov u_\pm|+|\ov u_\pm\pa_1u_\pm|\right)ds, 
	\enn
	and $C_i$, $i=1,2,3,4,5$ are positive constants depending only on $h_-,h_-',h_+',h_+,\varepsilon,\mu$. 
	
	In view of Lemma \ref{lem2.6}, to utilize Lemma \ref{lem3.2} and the inequality (\ref{3.14}), we define 
	\be\label{3.15}
	  v_\pm(x)=\pm\int_{\Gamma(A)}\frac{\pa G^{(\pm)}(x,y)}{\pa n(y)}u_\pm(y)ds(y),~~x\in D^\pm.
	\en
	Since $\ov{\Gamma(A)}$ is compact, $v_\pm$ is radiating in $D^\pm$. Due to the bounds (\ref{2.12}) (cf. \cite[Lemma 6.1]{SNB98}), we have $v_\pm|_{\Gamma_{h_\pm}}\in L^2(\Gamma_{h_\pm})\cap BC(\Gamma_{h_\pm})$. We further set 
	\ben
	  &&J_{\pm,A}'=\I\int_{\Gamma_{h_\pm}(A)}\ov v_\pm\pa_2v_\pm ds,~~~J_{\pm,A}''=\I\int_{\Gamma_{h_\pm}}\ov v_\pm\pa_2v_\pm ds, \\
	  &&L_{\pm,A}'=\Rt\int_{\Gamma_{h_\pm}(A)}\ov v_\pm\pa_2v_\pm ds,~~~L_{\pm,A}''=\Rt\int_{\Gamma_{h_\pm}}\ov v_\pm\pa_2v_\pm ds, \\
	  &&I_{\pm,A}'=\int_{\Gamma_{h_\pm}(A)}(|\pa_2v_\pm|^2-|\pa_1v_\pm|^2+k_\pm^2|v_\pm|^2)ds, \\
	  &&I_{\pm,A}''=\int_{\Gamma_{h_\pm}}(|\pa_2v_\pm|^2-|\pa_1v_\pm|^2+k_\pm^2|v_\pm|^2)ds. 
	\enn
	Then, by Lemma \ref{lem3.1}, 
	\ben
	  &&J_{-,A}''\leq0,~~~L_{-,A}''\geq0,~~~I_{-,A}''\leq-2k_-J_{-,A}'', \\
	  &&J_{+,A}''\geq0,~~~L_{+,A}''\leq0,~~~I_{+,A}''\leq2k_+J_{+,A}''. 
	\enn
	Hence, from (\ref{3.7}), (\ref{3.14}) and the preceding inequalities we derive that 
	\begin{align}\label{3.16}\nonumber
	  K_A\leq&\;C_1(I_{+,A}-I_{+,A}'')+C_2(I_{-,A}-I_{-,A}'')+C_3(L_{+,A}-L_{+,A}'')+C_4(L_{-,A}''-L_{-,A}) \\
	  &\;+C_1'(J_{+,A}''-J_{+,A})+\mu C_1'(J_{-,A}-J_{-,A}'')+C_5'(R_-(A)+R_+(A)), 
	\end{align}
	where $C_1'=\max\{2C_1k_+,2C_2k_-/\mu\}$ and $C_5'>0$ is a constant depending on $C_5,C_1',\mu$. 
	
	Let 
	\be\label{3.17}
	  w(x_1):=u_-(x_1,f(x_1))=u_-(x)|_{\Gamma}=u_+(x)|_\Gamma. 
	\en
	Clearly, $w\in BC(\R)$ and 
	\be\label{3.18}
	  \int_{-A}^{A}|w(x_1)|^2dx_1\leq K_A\leq C\int_{-A}^A|w(x_1)|^2dx_1 
	\en
	for some positive constant $C$ independent of $A$. Moreover, we define 
	\ben
	  W_A(x_1):=\int_{-A}^{A}(1+|x_1-y_1|)^{-3/2}|w(y_1)|dy_1 
	\enn
	with $0\leq A\leq+\infty$. By (\ref{2.12}) and (\ref{3.15}) we get that for $x\in\Gamma_{h_\pm}$, 
	\begin{align*}
	  |v_\pm(x)|,~~~~|\na v_\pm(x)|&\;\leq CW_A(x_1), \\
	  |u_\pm(x)-v_\pm(x)|,~~~~|\na u_\pm(x)-\na v_\pm(x)|&\;\leq C(W_\infty(x_1)-W_A(x_1)), 
	\end{align*}
    where $C>0$ is a constant independent of $A$. It then follows that 
    \begin{align*}
      |I_{\pm,A}'-I_{\pm,A}''|, |J_{\pm,A}'-J_{\pm,A}''|, |L_{\pm,A}'-L_{\pm,A}''|&\;\leq C\int_{\R\setminus[-A,A]}(W_A(x_1))^2dx_1, \\
      |I_{\pm,A}-I_{\pm,A}'|, |J_{\pm,A}-J_{\pm,A}'|, |L_{\pm,A}-L_{\pm,A}'|&\;\leq C\int_{-A}^A(W_\infty(x_1)-W_A(x_1))W_A(x_1)dx_1, 
    \end{align*}
    so that, from (\ref{3.16}) and (\ref{3.18}) for some constant $C_0>0$ and all $A>0$, 
    \begin{align*}
      \int_{-A}^{A}|w(x_1)|^2dx_1\leq&\;C_0\left\{\int_{-A}^A(W_\infty(x_1)-W_A(x_1))W_A(x_1)dx_1\right. \\
      &\quad\quad\left.+\int_{\R\setminus[-A,A]}(W_A(x_1))^2dx_1+R_-(A)+R_+(A)\right\}. 
    \end{align*}
    Applying Lemma \ref{lem3.2} it is obtained that $w\in L^2(\R)$, i.e., $u_-|_\Gamma,u_+|_\Gamma\in L^2(\Gamma)$ and for all $A_0>0$, 
    \ben
      \int_{\Gamma}|u_\pm|^2ds\leq C_0\sup\limits_{A>A_0}(R_-(A)+R_+(A)),~~j=1,2. 
    \enn
    Following the similar arguments in \cite{DTS03} (see the proof of Theorem 3.1, Step  5-7) and by Lemma \ref{lem2.7}, we get that $R_\pm(A)\rightarrow0$ as $A\rightarrow\infty$. Thus $u_\pm=0$ on $\Gamma$, and hence, by Lemma \ref{lem2.6}, $u_\pm=0$ in $D^\pm$. The proof is finally complete. 
\end{proof}
\begin{remark}\label{remark3.4}
	The conditions on the scatterer $(\Gamma,k_-,k_+,\mu)$ in some extents correspond to the diffraction grating case (see, e.g., the conditions in {\rm \cite[Theorem 3.1]{BS99}, \cite[Theorem 2.2]{JB11} and \cite[Theorem 2.2]{JB12}}). But we here obtain the uniqueness without any a priori assumption of quasi-periodicity. 
\end{remark}

\section{Existence of solution}\label{sec4}
\setcounter{equation}{0}
In this section, we equivalently reduce the TSP to a system of boundary integral equation on $\Gamma$ and show the existence of the unique solution to the TSP by proving the uniqueness result of the corresponding boundary integral equations and combining some existing work. 
	
In this section we always assume that $\Gamma\in BC^{1,1}$ if not otherwise stated. For $a<f^-$ define the single- and double-layer potentials: 
\begin{align*}
(\mathcal{S}_{k_+,a}^+\varphi)(x)&:=\int_{\Gamma}G_{k_+}^{+\mathcal{(I)}}(x,y;a)\varphi(y)ds(y),~~~x\in U_a^+\setminus\Gamma, \\
(\mathcal{D}_{k_+,a}^+\varphi)(x)&:=\int_{\Gamma}\frac{\pa}{\pa n(y)}G_{k_+}^{+\mathcal{(I)}}(x,y;a)\varphi(y)ds(y),~~~x\in U_a^+\setminus\Gamma. 
\end{align*}
And define the boundary integral operators: 
\begin{align*}
(S_{k_+,a}^+\varphi)(x)&:=\int_{\Gamma}G_{k_+}^{+\mathcal{(I)}}(x,y;a)\varphi(y)ds(y),~~~x\in\Gamma, \\
(K_{k_+,a}^+\varphi)(x)&:=\int_{\Gamma}\frac{\pa}{\pa n(y)}G_{k_+}^{+\mathcal{(I)}}(x,y;a)\varphi(y)ds(y),~~~x\in\Gamma, \\
(K_{k_+,a}^{\prime+}\varphi)(x)&:=\int_{\Gamma}\frac{\pa}{\pa n(x)}G_{k_+}^{+\mathcal{(I)}}(x,y;a)\varphi(y)ds(y),~~~x\in\Gamma, \\
(T_{k_+,a}^+\varphi)(x)&:=\frac{\pa}{\pa n(x)}\int_{\Gamma}\frac{\pa}{\pa n(y)}G_{k_+}^{+\mathcal{(I)}}(x,y;a)\varphi(y)ds(y),~~~x\in\Gamma. 
\end{align*}
Further, for $a>f^+$ similarly define the layer potential operators $\mathcal{S}_{k_-,a}^-,\mathcal{D}_{k_-,a}^-$ for $x\in U_a^-\setminus\Gamma$ and the boundary integral operators $S_{k_-,a}^-,K_{k_-,a}^-,K_{k_-,a}^{\prime-},T_{k_-,a}^-$ for $x\in\Gamma$. For detailed properties of these operators, we refer the readers to \cite[Lemma 4.1-4.4]{DTS03}.

We seek for the solution to the TSP in the form of 
\begin{align}\label{4.12}
u_+(x)&=(\mathcal{D}_{k_+,h_-}^+\varphi)(x)+(\mathcal{S}_{k_+,h_-}^+\psi)(x),~~x\in D^+, \\ \label{4.13}
u_-(x)&=\mu^{-1}[(\mathcal{D}_{k_-,h_+}^-\varphi)(x)+(\mathcal{S}_{k_-,h_+}^-\psi)(x)],~~x\in D^-. 
\end{align}
Then by \cite[Lemma 4.1-4.4]{DTS03} we can equivalently reduce the TSP to the system of boundary integral equation on $\Gamma$: 
\be\label{4.17}
  \mathcal{M}\chi=G
\en
where $\chi=(\varphi,\psi)^T$ are unknown density functions, $G=(g_1,g_2)^T$ with 
\be\label{4.16}
g_1\in BC^{1,\alpha}(\Gamma),~g_2\in BC^{0,\alpha}(\Gamma),~ \alpha\in(0,1), 
\en
and
\ben
\mathcal{M}=\left(
\begin{array}{cc}
	K_{k_+,h_-}^+-\mu^{-1}K_{k_-,h_+}^-+(1+\mu)(2\mu)^{-1}I & S_{k_+,h_-}^+-\mu^{-1}S_{k_-,h_+}^-  \\
	T_{k_-,h_+}^--T_{k_+,h_-}^+ & K_{k_-,h_+}^{\prime-}-K_{k_+,h_-}^{\prime+}+I \\
\end{array}
\right) 
\enn
We note that by the known results in \cite[Section 4.3]{DTS03}, to show the existence of solution to the equation (\ref{4.17}), it suffices to prove that it has at most one solution, which is the Theorem \ref{thm4.6} below. 

%For some $b>a$, $M>\max\{|a|,|b|\}$, $\eta\in(0,1]$ and $k\geq1$ integer, define 
%\ben
%\mathcal{B}(k,\eta,a,b,M):=\{f\in C^{k,\eta}(\R)|\inf\limits_{\R}f(x_1)\geq a,\sup\limits_{\R}f(x_1)\leq b~\text{and}~\|f\|_{k,\eta,\R}\leq M\}. 
%\enn
%To prove Theorem \ref{thm4.6}, we first refer the following lemma. 
\begin{lemma}{\rm (\cite[Lemma 4.5]{DTS03})}\label{lem4.5}
    Suppose conditions {\rm (\ref{4.16})} be fulfilled and $\chi=(\varphi,\psi)^T$ be a solution of the equation {\rm (\ref{4.17})}. Then $\varphi\in BC^{1,\alpha}(\Gamma)$ and $\psi\in BC^{0,\alpha}(\Gamma)$ with the same $\alpha$ as in {\rm (\ref{4.16})}. 
\end{lemma}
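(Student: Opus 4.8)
The statement is a regularity (bootstrap) result for the second-kind system $(\ref{4.17})$, so the plan is to exploit the fact that each diagonal block of $\mathcal{M}$ carries a nonzero multiple of the identity, which lets me solve the two scalar equations for $\varphi$ and $\psi$ separately with the remaining operators acting as smoothing perturbations. Writing $(\ref{4.17})$ out componentwise gives
\ben
  \frac{1+\mu}{2\mu}\,\varphi &=& g_1 - (K_{k_+,h_-}^+-\mu^{-1}K_{k_-,h_+}^-)\varphi - (S_{k_+,h_-}^+-\mu^{-1}S_{k_-,h_+}^-)\psi, \\
  \psi &=& g_2 - (T_{k_-,h_+}^--T_{k_+,h_-}^+)\varphi - (K_{k_-,h_+}^{\prime-}-K_{k_+,h_-}^{\prime+})\psi.
\enn
The existence framework of \cite[Section 4.3]{DTS03}, in which $(\ref{4.17})$ is posed, already furnishes a solution $\chi=(\varphi,\psi)^T\in BC^{1,\alpha'}(\Gamma)\times BC^{0,\alpha'}(\Gamma)$ for some fixed $\alpha'\in(0,1)$ determined by the operator theory. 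Thus the order (one derivative on $\varphi$) is present from the outset, and the task is only to raise the H\"older exponent from $\alpha'$ to the exponent $\alpha$ dictated by the data $(\ref{4.16})$. I would feed the two identities back into themselves and iterate.

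The engine of the bootstrap is the set of mapping properties recorded in \cite[Lemma 4.1--4.4]{DTS03}. On the $BC^{1,1}$ interface $\Gamma$ the single-layer operators gain one order, $S_{k_\pm,\cdot}^\pm:BC^{0,\alpha}(\Gamma)\to BC^{1,\alpha}(\Gamma)$, while the double-layer operators $K_{k_\pm,\cdot}^\pm$, $K_{k_\pm,\cdot}^{\prime\pm}$ are bounded on each H\"older class. The crucial point is that, because $\Gamma\in BC^{1,1}$, the kernels (and their first derivatives) inherit H\"older regularity of every exponent below $1$; consequently these operators deliver output of the \emph{data} exponent $\alpha$ even when the input carries only the lower exponent $\alpha'$, which is precisely what makes the exponent improvable. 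The delicate block is the hypersingular difference $T_{k_-,h_+}^--T_{k_+,h_-}^+$, since neither $T$ is bounded on these spaces alone. Here I would stress that only the difference occurs, and that its $k$-independent geometric principal part $\pa_{n(x)}\pa_{n(y)}\Phi_0(x,y)$ cancels; what remains comes from $\pa_{n(x)}\pa_{n(y)}[\Phi_{k_-}(x,y)-\Phi_{k_+}(x,y)]$ together with the reflected and $P_{k_\pm}^\pm$ contributions, all weakly singular near the diagonal (the reflected terms are in fact smooth there, since for $x,y\in\Gamma$ the mirror point $y_a'$ stays bounded away from $x$). Hence $T_{k_-,h_+}^--T_{k_+,h_-}^+$ is weakly singular and maps into $BC^{0,\alpha}(\Gamma)$ without costing a derivative. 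The second equation then forces $\psi\in BC^{0,\alpha}(\Gamma)$; substituting into the first, both $g_1$ and $(S_{k_+,h_-}^+-\mu^{-1}S_{k_-,h_+}^-)\psi$ lie in $BC^{1,\alpha}(\Gamma)$, and the double-layer term $(K_{k_+,h_-}^+-\mu^{-1}K_{k_-,h_+}^-)\varphi$ lands in $BC^{1,\alpha}(\Gamma)$ by the kernel regularity, so the identity term yields $\varphi\in BC^{1,\alpha}(\Gamma)$ with the claimed exponent.

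Because $\Gamma$ is unbounded, every estimate must be uniform under horizontal translation, i.e.\ in the global norms of $BC^{0,\alpha}(\Gamma)$ and $BC^{1,\alpha}(\Gamma)$ rather than their local analogues. Here the far-field bounds $(\ref{2.12})$ are decisive: away from the diagonal the kernels and their derivatives decay like $[1+|x_1-y_1|]^{-3/2}$, which is integrable along $\Gamma$, so the nonlocal tails contribute bounded, uniformly H\"older pieces and do not spoil the local potential-theoretic estimates. I expect the main obstacle to be exactly the hypersingular block: verifying rigorously that the $k$-independent principal parts of $T_{k_-,h_+}^-$ and $T_{k_+,h_-}^+$ genuinely coincide and cancel, so that the difference is only weakly singular, and that this cancellation is compatible with the translation-uniform H\"older estimates dictated by $(\ref{2.12})$. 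Once this is secured, the bootstrap of the exponent up to $\alpha$ is a finite iteration and causes no further difficulty.
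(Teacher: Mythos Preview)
The paper does not prove this lemma at all; it is simply quoted from \cite[Lemma~4.5]{DTS03}. So there is no ``paper's proof'' to compare against beyond the reference itself. Your sketch is, in outline, the standard regularity bootstrap that underlies such a result, and the identification of the cancellation in $T_{k_-,h_+}^- - T_{k_+,h_-}^+$ (the $k$-independent hypersingular principal part drops out, leaving a weakly singular remainder plus smooth reflected contributions) is exactly the point that makes the second row tractable.

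There is, however, a genuine gap in your starting hypothesis. You assume the solution already lies in $BC^{1,\alpha'}(\Gamma)\times BC^{0,\alpha'}(\Gamma)$, attributing this to ``the existence framework of \cite[Section~4.3]{DTS03}''. But the lemma is stated for \emph{any} solution of $(\ref{4.17})$, and in the paper it is invoked in the proof of Theorem~\ref{thm4.6} for an arbitrary $\chi\in[BC(\Gamma)]^2$ solving the \emph{homogeneous} system, \emph{before} existence is established. Using the existence theory to supply the initial H\"older regularity is therefore circular: that theory itself relies on (a version of) this lemma.

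The fix is routine but must be made explicit. Start the bootstrap from $\varphi,\psi\in BC(\Gamma)$. The same mapping properties you cite show that the weakly singular operators $S^\pm$, $K^\pm$, $K^{\prime\pm}$ and the difference $T^-_{k_-,h_+}-T^+_{k_+,h_-}$ send $BC(\Gamma)$ into $BC^{0,\beta}(\Gamma)$ for every $\beta\in(0,1)$; taking $\beta=\alpha$, the second row gives $\psi\in BC^{0,\alpha}(\Gamma)$ immediately, and the first row gives $\varphi\in BC^{0,\alpha}(\Gamma)$. Only after this initial step can you feed $\varphi\in BC^{0,\alpha}$ back into the first row and use the one-derivative gain of $K^\pm$ and $S^\pm$ on H\"older classes to conclude $\varphi\in BC^{1,\alpha}(\Gamma)$. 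No further iteration of the exponent is needed: since the kernels on a $BC^{1,1}$ interface deliver any exponent below $1$ in a single application, the target $\alpha$ is reached directly rather than by successive improvement.
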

\begin{theorem}\label{thm4.6}
	If $(\mu-1)(k_+^2-k_-^2\mu)\geq0$ and $k_+^2\neq k_-^2\mu$, then equation {\rm (\ref{4.17})} has at most one solution, i.e., the operator $\mathcal{M}$ is injective. 
\end{theorem}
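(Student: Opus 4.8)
The plan is to prove injectivity of $\mathcal{M}$ by showing that any solution $\chi=(\varphi,\psi)^T$ of the homogeneous equation $\mathcal{M}\chi=0$ must vanish. The natural route is to go back through the equivalence between the boundary integral equations and the TSP: if $\chi$ solves $\mathcal{M}\chi=0$, then by Lemma~\ref{lem4.5} we have $\varphi\in BC^{1,\alpha}(\Gamma)$ and $\psi\in BC^{0,\alpha}(\Gamma)$, so the potentials $u_\pm$ defined by (\ref{4.12})--(\ref{4.13}) have the regularity required in the definition of the TSP; by the jump relations and mapping properties collected in \cite[Lemma 4.1--4.4]{DTS03}, the pair $(u_+,u_-)$ is then a solution of the \emph{homogeneous} TSP ($g_1=g_2=0$): indeed, $\mathcal{D}_{k_\pm,\cdot}^\pm,\mathcal{S}_{k_\pm,\cdot}^\pm$ produce functions satisfying the Helmholtz equations in $D^\pm$ with the correct UPRC/DPRC (since the impedance Green's functions are radiating in the respective half-planes), the polynomial growth bound (iii) of the TSP follows from the bounds (\ref{2.11}), and the transmission conditions (ii) with $g_1=g_2=0$ are exactly the two rows of $\mathcal{M}\chi=0$ after inserting the appropriate single- and double-layer jump relations.

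Once $(u_+,u_-)$ is identified as a solution of the homogeneous TSP, I would invoke Theorem~\ref{thm3.3}: under the hypotheses $(\mu-1)(k_+^2-k_-^2\mu)\geq0$ and $k_+^2\neq k_-^2\mu$ — which are precisely the hypotheses assumed here — we conclude $u_\pm=0$ in $D^\pm$. It remains to deduce $\varphi=\psi=0$ from $u_\pm\equiv0$. This is the step where one must be careful: from $u_+=0$ in $D^+$ and $u_-=0$ in $D^-$ one takes traces and normal derivatives on $\Gamma$ from both sides and uses the jump relations for the single- and double-layer potentials built on the impedance Green's functions. The difference of the exterior and interior traces of $\mathcal{D}^+_{k_+,h_-}\varphi$ and the jump in the normal derivative of $\mathcal{S}^+_{k_+,h_-}\psi$ (together with the analogous relations on the $D^-$ side, which carry the factor $\mu^{-1}$) yield a linear system in $(\varphi,\psi)$ whose only solution is the trivial one; concretely, subtracting the limits from $D^+$ and from $D^-$ recovers $\varphi$ and $\psi$ (up to nonzero constants) in terms of the boundary values of $u_\pm$ and their normal derivatives, all of which are zero.

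The main obstacle I anticipate is not Theorem~\ref{thm3.3} itself (that work is already done) but the bookkeeping in the reduction: one must verify that the potentials (\ref{4.12})--(\ref{4.13}) genuinely land in the TSP solution classes $C^2(D^+)\cap BC^1(\overline{D^+}\setminus U^+_{h_+})$ and $C^2(D^-)\cap BC^1(\overline{D^-}\setminus U^-_{h_-})$, and that the parameters $h_\pm$ used in the Green's functions are compatible with $a<f^-$, $a>f^+$ in the definitions of $\mathcal{S}^\pm,\mathcal{D}^\pm$. These are exactly the mapping and regularity properties proved in \cite[Lemma 4.1--4.4]{DTS03}, so the proof will consist largely of citing them correctly and checking that the combination appearing in $\mathcal{M}$ reproduces the transmission conditions with the right coefficients $(1+\mu)(2\mu)^{-1}$ and the $\mu^{-1}$ factors; the analytical heart — the uniqueness of the PDE problem — has been relocated to Theorem~\ref{thm3.3}. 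A secondary point worth stating explicitly is that $\Gamma\in BC^{1,1}$ is assumed throughout this section, which is what licenses the use of $T^\pm$ and the hypersingular jump relations and the regularity upgrade in Lemma~\ref{lem4.5}.
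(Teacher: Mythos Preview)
Your first half is fine and matches the paper: Lemma~\ref{lem4.5} upgrades the regularity of $(\varphi,\psi)$, the potentials (\ref{4.12})--(\ref{4.13}) land in the correct solution classes by \cite[Lemma~4.1--4.4]{DTS03}, and Theorem~\ref{thm3.3} then kills $u_\pm$ in $D^\pm$.

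The gap is in your final step. Knowing $u_+=0$ in $D^+$ and $u_-=0$ in $D^-$ does \emph{not} let the jump relations alone force $\varphi=\psi=0$. The jumps give
\[
[u_+]^+-[u_+]^-=\varphi,\qquad [\pa_nu_+]^+-[\pa_nu_+]^-=-\psi,
\]
and similarly for $u_-$ with the factor $\mu^{-1}$. From $u_+\equiv 0$ in $D^+$ you learn $[u_+]^+=[\pa_nu_+]^+=0$, hence $\varphi=-[u_+]^-$ and $\psi=[\pa_nu_+]^-$. But $[u_+]^-$ is the trace of the potential $u_+$ \emph{from the $D^-$ side}, where nothing has been proved about $u_+$; likewise $[u_-]^+$ is unknown. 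Since $u_+$ and $u_-$ are built from different Green's functions (wave numbers $k_+$ versus $k_-$), there is no algebraic relation linking $[u_+]^-$ to $[u_-]^-$ or to the known zeros that would close the system. Your sentence ``all of which are zero'' is precisely the unjustified assertion.

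The paper handles this by studying $u_+$ in the strip $D^-_{h_-}$ and $u_-$ in the strip $D^+_{h_+}$. Because the layer potentials are built from the \emph{impedance} Green's functions $G^{\pm(\mathcal{I})}_{k_\pm}$, these restrictions inherit impedance conditions $\pa_2 v_\pm=\mp ik_\pm v_\pm$ on $\Gamma_{h_\mp}$; together with the matching $[u_+]^-=-\mu[u_-]^+$, $[\pa_nu_+]^-=-\mu[\pa_nu_-]^+$ coming from the already-established vanishing on the other sides, one obtains a coupled boundary value problem (the paper's (\ref{4.22})) in the finite-height strip. A separate uniqueness argument for this auxiliary problem---Green's identities yielding $L^2$ control on $\Gamma_{h_\pm}$, decay at $|x_1|\to\infty$, and finally Holmgren---is what actually forces $u_+=0$ in $D^-_{h_-}$ and $u_-=0$ in $D^+_{h_+}$. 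Only then do the jump relations give $\varphi=\psi=0$. This auxiliary step is the substantive content of the theorem beyond Theorem~\ref{thm3.3}, and it is missing from your proposal.
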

\begin{proof}
	It suffices to show the homogeneous version of equation (\ref{4.17}) has only the trivial solution. Suppose $\chi=(\varphi,\psi)^T\in[BC(\Gamma)]^2$ solve the equation 
	\be\label{4.18}
	\mathcal{M}\chi=0, 
	\en
	Define $u_-$ and $u_+$ by (\ref{4.12}) and (\ref{4.13}), respectively, with the corresponding $\varphi$ and $\psi$. By Lemma \ref{lem4.5} we know $\varphi\in BC^{1,\beta}(\Gamma)$ and $\psi\in BC^{0,\beta}(\Gamma)$ for all $\beta\in(0,1)$. It then follows from \cite[Lemma 4.1,4.4]{DTS03} and Theorem \ref{thm3.3} that $u_\pm=0$ in $D^\pm$.  
	
	We now further consider the same functions $u_-$ and $u_+$ in the domains $D^+_{h_+}$ and $D^-_{h_-}$, respectively. Applying \cite[Lemma 4.1]{DTS03} we see that for $x\in\Gamma$
	\begin{align}\label{4.19}
	  [u_-(x)]^+-[u_-(x)]^-=\mu^{-1}\varphi(x)&,~[\pa_nu_-(x)]^+-[\pa_nu_-(x)]^-=-\mu^{-1}\psi(x), \\ \label{4.20}
	  [u_+(x)]^+-[u_+(x)]^-=\varphi(x)&,~[\pa_nu_+(x)]^+-[\pa_nu_+(x)]^-=-\psi(x),
	\end{align}
	where $[\cdot]^+$ and $[\cdot]^-$ denote the limits on $\Gamma$ from $D^+$ and $D^-$, respectively. Since $u_\pm=0$ in $D^\pm$, we derive that 
	\be\label{4.21}
	  -\varphi(x)=[u_+(x)]^-=-\mu[u_-(x)]^+,~\psi(x)=[\pa_nu_+(x)]^-=-\mu[\pa_nu_-(x)]^+ 
	\en
	for $x\in\Gamma$. Set 
	\ben
	  &&D^-_*:=D^+_{h_+},~~~D^+_*:=D^-_{h_-}, \\
	  &&v_-(x):=-\mu u_-(x)~~\text{in}~D^-_*,~~~v_+(x):=u_+(x)~~\text{in}~D^+_*. 
	\enn
	It can be verified from (\ref{2.10}) and (\ref{4.21}) that $v_-$ and $v_+$ satisfy the following boundary value problem 
	\be\label{4.22}
	\left\{
	\begin{array}{ll}
		\Delta v_\pm+k_\pm^2v_\pm=0~~~&{\rm in}~D^\pm_*, \\[2mm]
		[v_-]^+=[v_+]^-,~~[\pa_nv_-]^+=[\pa_nv_+]^-~~~&{\rm on}~\Gamma,\\[2mm]
		\pa_2v_-=ik_-v_-~~~&{\rm on}~\Gamma_{h_+}, \\[2mm]
		\pa_2v_+=-ik_+v_+~~~&{\rm on}~\Gamma_{h_-}, 
	\end{array}
	\right.
	\en
	and by \cite[Lemma 4.4]{DTS03} we have $v_\pm\in BC^1(\ov{D^\pm_*})$. 
	
	Define 
	\ben
	  &&D^\pm_*(A,B):=\{x\in D^\pm_*|A<x_1<B\},~~\Gamma(A,B):=\{x\in\Gamma|A<x_1<B\}, \\
	  &&\Gamma_{h_\pm}(A,B):=\{x\in\Gamma_{h_\pm}|A<x_1<B\},~~l_\pm(A):=\{x\in D^\pm_*|x_1=A\}. 
	\enn
	By (\ref{4.22}) and Green's formula, the same as in (\ref{3.5})-(\ref{3.7}), we obtain that 
	\begin{align}\label{4.23}\nonumber
	  &k_-\int_{\Gamma_{h_+}(A,B)}|v_-|^2ds+k_+\int_{\Gamma_{h_-}(A,B)}|v_+|^2ds \\
	  &=\I\left(\widetilde{R}_-(A)-\widetilde{R}_-(B)+\widetilde{R}_+(A)-\widetilde{R}_+(B)\right), 
	\end{align}
	where 
	\ben
	  \wid R_\pm(P):=\int_{l_\pm(P)}\ov v_\pm\pa_1v_\pm ds,~P=A,B. 
	\enn
	Since $v_\pm\in BC^1(\ov{D^\pm_*})$, we have $|\wid R_\pm(P)|$ are uniformly bounded for $P\in(-\infty,+\infty)$. Then by (\ref{4.23}) we deduce that $v_-|_{\Gamma_{h_+}}\in L^2(\Gamma_{h_+})$ and $v_+|_{\Gamma_{h_-}}\in L^2(\Gamma_{h_-})$. Again due to $v_\pm\in BC^1(\ov{D^\pm_*})$, we further yield that 
	\be\label{4.24}
	  &&v_-(x_1,h_+)\rightarrow0~~\text{and}~~v_+(x_1,h_-)\rightarrow0~~\text{as}~|x_1|\rightarrow+\infty. 
	\en
	
	Now integrating by parts in $D^\pm_*$, the same as in (\ref{3.8})-(\ref{3.14}), we derive that 
	\begin{align}\label{4.25}\nonumber
	  \int_{\Gamma(A,B)}|v_-|^2\leq&\;C\left(\int_{\Gamma_{h_+}(A,B)}|v_-|^2ds+\int_{\Gamma_{h_-}(A,B)}|v_+|^2ds\right. \\
	  &\qquad\left.+\hat R_-(A)+\hat R_-(B)+\hat R_+(A)+\hat R_+(B)\right), 
	\end{align}
	where 
	\ben
	  \hat R_\pm(P):=\int_{l_\pm(P)}|\pa_1v_\pm\pa_2\ov v_\pm|+|\ov v_\pm\pa_1v_\pm|ds,~P=A,B. 
	\enn
	Also we have $|\hat R_\pm(P)|$ are uniformly bounded for $P\in(-\infty,+\infty)$. It then follows that $-\varphi=v_-|_\Gamma\in L^2(\Gamma)$ and thus 
	\be\label{4.26}
	  \varphi(x)\rightarrow0~~\text{as}~|x_1|\rightarrow+\infty. 
	\en
	From (\ref{4.24}) and (\ref{4.26}), by the same arguments in the proof of \cite[Lemma 4.6]{DTS03}, we can prove that $v_\pm(x)\rightarrow0$ as $|x|\rightarrow\infty$ uniformly in $\ov{D^\pm_*}$, which implies 
	\ben
	  \lim\limits_{P\rightarrow\pm\infty}|\wid R_\pm(P)|=0,~~j=1,2. 
	\enn
	In view of (\ref{4.23}), we get $v_-=0$ on $\Gamma_{h_+}$ and $v_+=0$ on $\Gamma_{h_-}$. Moreover, by the boundary condition on $\Gamma_{h_\pm}$ in (\ref{4.22}), we obtain that $\pa_2v_-=0$ on $\Gamma_{h_+}$ and $\pa_2v_+=0$ on $\Gamma_{h_-}$, which indicates by Holmgren's uniqueness theorem that $v_\pm=0$ in $D^\pm_*$. Hence, $u_-=0$ in $D^+_{h_+}$ and $u_+=0$ in $D^-_{h_-}$. Finally, by the jump relations (\ref{4.19}) and (\ref{4.20}), it follows that $\varphi=\psi=0$ on $\Gamma$, which ends the proof. 
\end{proof}

With Theorems \ref{thm3.3}, \ref{thm4.6} and the results in \cite[Section 4.3]{DTS03}, we now state our main theorem for the TSP. 
\begin{theorem}\label{thm4.7}
	Suppose the scatterer $(\Gamma,k_-,k_+,\mu)$ satisfies $k_\pm,\mu>0$, $(\mu-1)(k_+^2-k_-^2\mu)\geq0$ and $k_+^2\neq k_-^2\mu$. Given $g_1\in BC^{1,\alpha}(\Gamma)$ and $g_2\in BC^{0,\alpha}(\Gamma)$, for $h_-,h_+\in\R$ with $h_-<f^-<f^+<h_+$, the TSP has exactly one solution $(u_+,u_-)$ in the form of
	\begin{align*}
	u_+(x)&=(\mathcal{D}_{k_+,h_-}^+\varphi)(x)+(\mathcal{S}_{k_+,h_-}^+\psi)(x),~~x\in D^+, \\ 
	u_-(x)&=\mu^{-1}[(\mathcal{D}_{k_-,h_+}^-\varphi)(x)+(\mathcal{S}_{k_-,h_+}^-\psi)(x)],~~x\in D^-, 
	\end{align*}
	such that $u_\pm\in C^2(D^\pm)\cap C^1(\ov D^\pm)\cap BC^1(\ov D^\pm_{h_\pm})$. Here, $\chi:=(\varphi,\psi)^T\in BC^{1,\alpha}(\Gamma)\times BC^{0,\alpha}(\Gamma)$ is the unique solution to the boundary integral equation 
	\be\label{4.14}
	\mathcal{M}\chi=G 
	\en
	with 
	\ben
	\mathcal{M}=\left(
	\begin{array}{cc}
		K_{k_+,h_-}^+-\mu^{-1}K_{k_-,h_+}^-+(1+\mu)(2\mu)^{-1}I & S_{k_+,h_-}^+-\mu^{-1}S_{k_-,h_+}^-  \\
		T_{k_-,h_+}^--T_{k_+,h_-}^+ & K_{k_-,h_+}^{\prime-}-K_{k_+,h_-}^{\prime+}+I \\
	\end{array}
	\right) 
	\enn
	and $G=(g_1,g_2)^T$, where the integral operator $\mathcal{M}$ is bijective (hence boundedly invertible) in $[BC(\Gamma)]^2$. Moreover, in $D^\pm_{h_\pm}$, $u_\pm$ depend continuously on $\|g_1\|_{\infty,\Gamma}$ and $\|g_2\|_{\infty,\Gamma}$, while $\na u_\pm$ depend continuously on $\|g_1\|_{1,\alpha,\Gamma}$ and $\|g_2\|_{0,\alpha,\Gamma}$. 
\end{theorem}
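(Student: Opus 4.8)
The plan is to assemble Theorem \ref{thm4.7} by combining three ingredients that are, by this point, essentially in place: (i) the equivalence between the TSP and the boundary integral equation $\mathcal{M}\chi=G$, which is asserted via \cite[Lemma 4.1--4.4]{DTS03} and the ansatz (\ref{4.12})--(\ref{4.13}); (ii) the injectivity of $\mathcal{M}$ on $[BC(\Gamma)]^2$, which is exactly Theorem \ref{thm4.6}, resting in turn on the uniqueness Theorem \ref{thm3.3}; and (iii) the surjectivity (and hence bounded invertibility) of $\mathcal{M}$, which the excerpt explicitly defers to \cite[Section 4.3]{DTS03} by remarking that once at most one solution is known, existence follows. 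So the first step I would take is to spell out that $\mathcal{M}$ is a Fredholm-type operator of the form ``invertible operator plus compact perturbation'' on $[BC(\Gamma)]^2$ — this is the content of \cite[Lemma 4.1--4.4]{DTS03} together with the general solvability theory for second-kind integral equations on unbounded domains from \cite{SB97,SB02,SBC00} — so that the Riesz-theory-type dichotomy applies and injectivity upgrades to bijectivity with a bounded inverse.

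Next I would unwind the equivalence carefully in both directions. Given $G=(g_1,g_2)^T$ with the regularity (\ref{4.16}), bounded invertibility of $\mathcal{M}$ produces a unique $\chi=(\varphi,\psi)^T\in[BC(\Gamma)]^2$ with $\|\chi\|\le C\|G\|$; Lemma \ref{lem4.5} then promotes this to $\varphi\in BC^{1,\alpha}(\Gamma)$, $\psi\in BC^{0,\alpha}(\Gamma)$. Defining $u_\pm$ by (\ref{4.12})--(\ref{4.13}) and invoking the mapping and jump properties of the layer potentials $\mathcal{S}^\pm,\mathcal{D}^\pm$ from \cite[Lemma 4.1--4.4]{DTS03}, one checks that $u_\pm\in C^2(D^\pm)\cap C^1(\ov{D^\pm})\cap BC^1(\ov{D^\pm_{h_\pm}})$, that $u_\pm$ solve the Helmholtz equations with the correct wave numbers, that the transmission conditions (ii) of the TSP hold on $\Gamma$, and that the radiation conditions (iv) are automatic because $G_{k_+}^{+(\mathcal{I})}(\cdot,y;h_-)$ is radiating in $U_{h_-}^+\supset D^+$ and $G_{k_-}^{-(\mathcal{I})}(\cdot,y;h_+)$ is radiating in $U_{h_+}^-\supset D^-$; condition (iii), the polynomial growth in $x_2$, follows from the bounds (\ref{2.11}) together with $\varphi,\psi\in BC(\Gamma)$. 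Conversely, any TSP solution $(u_+,u_-)$, via the Green's representation in the manner of \cite[Lemma 4.1]{DTS03}, can be written in the form (\ref{4.12})--(\ref{4.13}) for some densities solving $\mathcal{M}\chi=G$; uniqueness of that $\chi$ then forces uniqueness of $(u_+,u_-)$. The continuous dependence statement is read off from $\|\chi\|_{1,\alpha}\lesssim\|G\|$ and the continuity of the potential operators: $\|u_\pm\|_{\infty,D^\pm_{h_\pm}}\lesssim\|\varphi\|_{\infty,\Gamma}+\|\psi\|_{\infty,\Gamma}\lesssim\|g_1\|_{\infty,\Gamma}+\|g_2\|_{\infty,\Gamma}$, while the gradient estimate needs the sharper norms $\|g_1\|_{1,\alpha,\Gamma},\|g_2\|_{0,\alpha,\Gamma}$ because differentiating the double-layer and the hypersingular part introduces the $\alpha$-H\"older seminorms.

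The main obstacle, and the only genuinely new part here, is the injectivity step already carried out in Theorem \ref{thm4.6}; everything downstream of that is bookkeeping over the cited results in \cite{DTS03}. Within the present theorem the subtle points are therefore organizational: (a) making sure the equivalence ``TSP $\Leftrightarrow$ $\mathcal{M}\chi=G$'' is genuinely two-sided and not just ``ansatz solves TSP'', since the uniqueness of $(u_+,u_-)$ in the TSP class is claimed, not merely uniqueness among potentials — this is where one must appeal to the representation lemma (Lemma \ref{lem2.6} / \cite[Lemma 4.1]{DTS03}) to show every TSP solution is of potential form; and (b) verifying that the solvability theory of \cite{SB97,SB02,SBC00} applies to $\mathcal{M}$ in $[BC(\Gamma)]^2$ under $\Gamma\in BC^{1,1}$, i.e., that the non-compact constant-multiple-of-identity part of $\mathcal{M}$ is invertible — here the relevant diagonal term $(1+\mu)(2\mu)^{-1}I$ in the first row and $I$ in the second are both nonzero multiples of the identity since $\mu>0$, so this causes no trouble. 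I would keep the write-up short, stating that ``the bijectivity of $\mathcal{M}$ follows from Theorem \ref{thm4.6}, Lemma \ref{lem4.5}, \cite[Lemma 4.1--4.4]{DTS03} and the solvability results of \cite[Section 4.3]{DTS03}'', and then devoting the remaining lines to the back-and-forth translation between $\chi$ and $(u_+,u_-)$ and to reading off the stability bounds.
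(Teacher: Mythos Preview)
Your proposal is correct and follows essentially the same assembly as the paper: the paper simply states that Theorem \ref{thm4.7} follows from Theorem \ref{thm3.3}, Theorem \ref{thm4.6}, and the results of \cite[Section 4.3]{DTS03}, and you have spelled out precisely how these pieces fit together.

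One small point of economy: your ``converse'' step --- arguing that every TSP solution can be written in the combined-layer form (\ref{4.12})--(\ref{4.13}) via a Green representation, and then deducing uniqueness of $(u_+,u_-)$ from uniqueness of $\chi$ --- is unnecessary and not quite the paper's route. Uniqueness of the TSP solution in the full class (not merely among potentials) is already Theorem \ref{thm3.3}; the integral-equation machinery is used only for existence. So the logic is simply: Theorem \ref{thm3.3} gives at most one TSP solution; Theorem \ref{thm4.6} plus \cite[Section 4.3]{DTS03} gives bijectivity of $\mathcal{M}$, hence a $\chi$, hence via (\ref{4.12})--(\ref{4.13}) and \cite[Lemma 4.1--4.4]{DTS03} a TSP solution. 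You do not need to show that an arbitrary TSP solution is of potential form, and indeed that claim is not established in the paper. Dropping that paragraph tightens the argument without losing anything.
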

\begin{remark}\label{remark4.8}
	From {\rm \cite{TSK03}} we see that the unique solvability of the integral equation {\rm (\ref{4.14})} in $L^p$ $(1\leq p\leq\infty)$ spaces can be obtained from the unique solvability in the space of bounded and continuous functions. In other words, the integral opeator $\mathcal{M}$ is also bijective and boundedly invertible in $[L^p(\Gamma)]^2$. 
\end{remark}

\section{Singularity of solution}\label{sec5}
\setcounter{equation}{0}
In this section, we aim to derive the singularity of the solutions to the TSP corresponding to the incident point source or hypersingular point source in the $H^1$ sense, which is of great significance in the uniqueness proof of the inverse problem and also interesting on their own rights. Here we give a novel and simple perspective to investigate on the singularity of the solutions with singular incidence, which can be applied to many other cases. 

\begin{lemma}\label{lem5.1}
	For fixed $y\in\R^2$ and $k,k_1\geq0$, $\Phi_{k_1}(\cdot,y)-\Phi_{k}(\cdot,y)\in W^{2,p}(B_1(y))$ with $1\leq p<\infty$. 
\end{lemma}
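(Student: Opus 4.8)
The plan is to show that the singularities of the two fundamental solutions cancel, leaving a remainder with enough regularity. Recall that in $\R^2$ one has $\Phi_k(x,y) = \frac{i}{4}H_0^{(1)}(k|x-y|)$, and the well-known expansion $H_0^{(1)}(t) = 1 + \frac{2i}{\pi}\log(t/2) + \frac{2i}{\pi}\gamma_E + O(t^2\log t)$ for small $t>0$, where $\gamma_E$ is Euler's constant. Consequently $\Phi_k(x,y) = -\frac{1}{2\pi}\log|x-y| + a_k + R_k(x,y)$, where $a_k$ is a constant (depending on $k$ only, through $-\frac{1}{2\pi}\log(k/2)$ plus a fixed constant) and $R_k(\cdot,y)$ is a remainder. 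The key point is that the leading logarithmic term $-\frac{1}{2\pi}\log|x-y|$ is \emph{independent of} $k$, so upon subtracting,
\[
  \Phi_{k_1}(x,y) - \Phi_k(x,y) = (a_{k_1} - a_k) + \big(R_{k_1}(x,y) - R_k(x,y)\big),
\]
and the singular parts have disappeared. (The case $k=0$ is literally included since $\Phi_0(x,y) = \frac{1}{2\pi}\log(1/|x-y|)$ has exactly this leading term and no further correction.)

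First I would make the remainder estimate precise. Write $w(x) := \Phi_{k_1}(x,y) - \Phi_k(x,y)$. Away from $y$, $w$ is smooth, so only the behaviour near $x=y$ matters; fix the ball $B_1(y)$. I would use that $v(x) := \Phi_k(x,y) + \frac{1}{2\pi}\log|x-y|$ satisfies $\Delta v = \Delta\Phi_k(\cdot,y) + \frac{1}{2\pi}\Delta\log|x-y| = -k^2\Phi_k(\cdot,y) - \delta_y + \delta_y = -k^2\Phi_k(\cdot,y)$ in the distributional sense on $B_1(y)$ — the Dirac masses cancel. Hence $w = v_{k_1} - v_k$ solves, in the distributional sense on $B_1(y)$,
\[
  \Delta w = -k_1^2\,\Phi_{k_1}(\cdot,y) + k^2\,\Phi_k(\cdot,y) =: F,
\]
where $F \in L^p(B_1(y))$ for every $p<\infty$, because each $\Phi_k(\cdot,y)$ is in $L^p_{loc}$ (logarithmic singularities are in every $L^p$, $p<\infty$). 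Moreover $w$ itself is in $L^p(B_1(y))$, indeed bounded, since the log singularities cancel as noted above.

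Then the conclusion is immediate from interior elliptic regularity: if $w \in L^p$ and $\Delta w \in L^p$ on a bounded domain, then $w \in W^{2,p}_{loc}$, and in particular $w \in W^{2,p}(B_{1/2}(y))$; combined with smoothness on $B_1(y)\setminus B_{1/2}(y)$ this gives $w\in W^{2,p}(B_1(y))$ (or on a slightly smaller ball, which suffices for the intended application; if exact statement on $B_1(y)$ is wanted, one works on $B_2(y)$ and restricts). Alternatively, one can avoid elliptic regularity entirely and argue directly: represent $w$ near $y$ via the Bessel expansion of $H_0^{(1)}$, giving an explicit series $w(x) = c_0 + \sum_{m\ge1}(c_m + d_m\log|x-y|)|x-y|^{2m}$ with rapidly decaying coefficients, and check termwise that $|x-y|^{2m}$ and $|x-y|^{2m}\log|x-y|$ lie in $W^{2,p}(B_1(y))$ for $m\ge1$ (the $m=0$ term is constant). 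Either route is routine; the only mild subtlety — the ``main obstacle'' such as it is — is the bookkeeping that the Dirac masses from $\Delta\log|x-y|$ and from $\Delta\Phi_k(\cdot,y)$ cancel exactly, so that $F$ has no distributional singular part and genuinely lies in $L^p$.
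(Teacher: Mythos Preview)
Your argument is correct, but it proceeds quite differently from the paper. The paper reduces to $\Phi_k-\Phi_0$, then \emph{computes} $\pa_i(\Phi_k-\Phi_0)$ and $\pa_{ij}(\Phi_k-\Phi_0)$ explicitly via the Hankel recurrences $H_0^{(1)\prime}=-H_1^{(1)}$ and $H_1^{(1)}+zH_1^{(1)\prime}=zH_0^{(1)}$, and uses the small-argument asymptotics of $H_0^{(1)},H_1^{(1)}$ to obtain the pointwise bounds $|\Phi_k-\Phi_0|\le C$, $|\pa_i(\Phi_k-\Phi_0)|\le C$, $|\pa_{ij}(\Phi_k-\Phi_0)|\le C|\ln|x-y||$, from which $W^{2,p}$ follows for every $p<\infty$. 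Your route instead exploits the distributional identity $\Delta w=-k_1^2\Phi_{k_1}+k^2\Phi_k\in L^p$ (the $\delta_y$'s cancel) together with $w\in L^\infty$ and interior Calder\'on--Zygmund regularity; this is cleaner and avoids any Hankel bookkeeping. Two small remarks: (i) the $L^p$ elliptic estimate you invoke is standard only for $1<p<\infty$, but $p=1$ then follows trivially on a bounded ball by inclusion; (ii) the paper later (in the proof of the hypersingular-source result) actually \emph{reuses} the explicit pointwise second-derivative bound $|\pa_{ij}(\Phi_k-\Phi_0)|\le C|\ln|x-y||$ obtained in its proof, so the computational approach buys information that is recycled downstream --- your Bessel-series alternative would recover this too, but the bare elliptic-regularity version does not.
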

\begin{proof}
	Since $\Phi_{k_1}-\Phi_{k}=\Phi_{k_1}-\Phi_0+\Phi_0-\Phi_{k}$, it suffices to show that $\Phi_{k}(\cdot,y)-\Phi_{0}(\cdot,y)\in W^{2,p}(B_1(y))$ for $k>0$. First we remind two recurrence formulas for the Hankel functions, i.e., 
	\ben
	  H^{(1)\prime}_0(z)=-H^{(1)}_1(z)~~~{\rm and}~~~H^{(1)}_1(z)+zH^{(1)\prime}_1(z)=zH^{(1)}_0(z). 
	\enn
	Then by direct calculation we see that
	\ben
	  \pa_i(\Phi_{k}(x,y)-\Phi_{0}(x,y))=\frac{x_i-y_i}{|x-y|}\left(\frac{1}{2\pi}\frac{1}{|x-y|}-\frac{i}{4}kH_1^{(1)}(k|x-y|)\right), 
	\enn
	and 
	\ben
	  &&\quad\pa_{ij}(\Phi_{k}(x,y)-\Phi_{0}(x,y)) \\ [1mm]
	  &&=\frac{\delta_{ij}|x-y|^2-2(x_i-y_i)(x_j-y_j)}{|x-y|^3}\left(\frac{1}{2\pi}\frac{1}{|x-y|}-\frac{i}{4}kH_1^{(1)}(k|x-y|)\right) \\
	  &&\quad-\frac{i}{4}\frac{(x_i-y_i)(x_j-y_j)}{|x-y|^2}k^2H_0^{(1)}(k|x-y|), 
	\enn
	where $i,j=1,2$ and $\delta_{ij}$ is the Kronecker symbol. Further, we recall that as $z\rightarrow0^+$(see details in \cite{NL72})
	\ben
	  \left|\frac{i}{4}H_0^{(1)}(z)-\frac{1}{2\pi}\ln\frac{1}{z}\right|\leq C,~~{\rm and }~~\left|\frac{1}{2\pi z}-\frac{i}{4}kH_1^{(1)}(kz)\right|\leq Cz|\ln(z)|, 
	\enn
	which implies that for $x\in B_1(y)$ and $i,j=1,2$, 
	\ben
	  &&|\Phi_{k}(x,y)-\Phi_{0}(x,y)|\leq C,~~~|\pa_i(\Phi_{k}(x,y)-\Phi_{0}(x,y))|\leq C, \\
	  &&|\pa_{ij}(\Phi_{k}(x,y)-\Phi_{0}(x,y))|\leq C|\ln|x-y||. 
	\enn
	Thus, the assertion follows. 
\end{proof}
\begin{theorem}\label{thm3.1}
	Suppose the scatterer $(\Gamma,k_-,k_+,\mu)$ is under the assumptions in Theorem {\rm \ref{thm4.7}}. Suppose further $\Gamma\in C^{2,\lambda}$ $(or~f\in C^{2,\lambda}(\R))$ with $\lambda\in (0,1]$.  For $x_0\in\Gamma$ and $\delta>0$ small enough, define $x_j:=x_0-(\delta/j)n(x_0)\in D^+$, $j\in\N$. Then the unique solutions $(u^{(j)}_+,u^{(j)}_-)$ to the TSP with the incident point source $u^i_j(x)=\Phi_{k_+}(x,x_j),j\in\N$, satisfy that 
	\ben
	\left\|u^{(j)}_--\frac{2}{\mu+1}\Phi_0(x,x_j)\right\|_{H^{1}(D_0)}\leq C, 
	\enn
	where the constant $C>0$ is independent of $j\in\N$, and $D_0$ is a $C^{2,\lambda}$ bounded domain such that $B_{2\delta}(x_0)\cap D^-\subset D_0\subset D^-$.  
\end{theorem}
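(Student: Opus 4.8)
The plan is to peel off from $(u^{(j)}_+,u^{(j)}_-)$ an explicit reference pair that already carries the expected singularity, to show that the remainder solves a transmission scattering problem (for the same scatterer) whose interface data is bounded uniformly in $j$, and then to read off the $H^1(D_0)$ bound from the well-posedness of Theorem \ref{thm4.7} together with the boundary–integral representation of the remainder. For $j$ large let $\hat x_j:=x_0+(\delta/j)n(x_0)\in D^-$ be the mirror image of $x_j$ in the tangent line to $\Gamma$ at $x_0$, and put
\[
\widetilde u^{(j)}_+:=\tfrac{1-\mu}{1+\mu}\,\Phi_{k_+}(\cdot,\hat x_j)\ \text{in }D^+,\qquad
\widetilde u^{(j)}_-:=\tfrac{2}{1+\mu}\,\Phi_{k_-}(\cdot,x_j)\ \text{in }D^-.
\]
Since $\hat x_j\notin\ov{D^+}$ and $x_j\notin\ov{D^-}$, each $\widetilde u^{(j)}_\pm$ is a radiating solution of the corresponding Helmholtz equation in $D^\pm$, hence satisfies the UPRC/DPRC by Remark \ref{remark2.5}, is bounded on $\ov{D^\pm}$, and $(\widetilde u^{(j)}_+,\widetilde u^{(j)}_-)$ is an admissible pair. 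The two prefactors are precisely the reflection and transmission coefficients for $\Delta u=0$ across a straight line with transmission factor $\mu$: trying $u_\pm=r\,\Phi_0(\cdot,\hat x_j),\ t\,\Phi_0(\cdot,x_j)$ in the two interface conditions forces $r+1=t$ and $1-r=\mu t$, i.e. $t=\tfrac{2}{1+\mu}$, $r=\tfrac{1-\mu}{1+\mu}$. Put $v^{(j)}_\pm:=u^{(j)}_\pm-\widetilde u^{(j)}_\pm$; by linearity $(v^{(j)}_+,v^{(j)}_-)$ solves the TSP for $(\Gamma,k_-,k_+,\mu)$ with data $\widetilde g^{(j)}_1=-(\Phi_{k_+}(\cdot,x_j)+\widetilde u^{(j)}_+-\widetilde u^{(j)}_-)|_\Gamma$ and $\widetilde g^{(j)}_2=-(\pa_n\Phi_{k_+}(\cdot,x_j)+\pa_n\widetilde u^{(j)}_+-\mu\,\pa_n\widetilde u^{(j)}_-)|_\Gamma$, which lie in $BC^{1,\alpha}(\Gamma)\times BC^{0,\alpha}(\Gamma)$ for each fixed $j$.

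\emph{Uniform bound on the reduced data.} Away from $x_0$ the source and the two reference waves are smooth with bounded derivatives, so, using (\ref{2.5})--(\ref{2.6}), $\widetilde g^{(j)}_i$ are controlled uniformly on $\Gamma\se B_{2\delta}(x_0)$. Near $x_0$, by Lemma \ref{lem5.1} and translation invariance, $\Phi_{k_+}(\cdot,x_j)-\Phi_0(\cdot,x_j)$, $\Phi_{k_+}(\cdot,\hat x_j)-\Phi_0(\cdot,\hat x_j)$ and $\Phi_{k_-}(\cdot,x_j)-\Phi_0(\cdot,x_j)$ are bounded in $W^{2,p}(B_1(x_0))$ uniformly in $j$, so their traces and normal traces on $\Gamma$ are bounded in $BC^{1,\alpha}$, resp. $BC^{0,\alpha}$. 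Substituting $\Phi_0$ for every $\Phi_{k_\pm}$ and using the identities for $r,t$ collapses the data to
\[
\widetilde g^{(j)}_1=\tfrac{1-\mu}{1+\mu}\big[\Phi_0(\cdot,x_j)-\Phi_0(\cdot,\hat x_j)\big]\big|_\Gamma+R^{(j)}_1,\qquad
\widetilde g^{(j)}_2=\tfrac{\mu-1}{1+\mu}\big[\pa_n\Phi_0(\cdot,x_j)+\pa_n\Phi_0(\cdot,\hat x_j)\big]\big|_\Gamma+R^{(j)}_2,
\]
with $\|R^{(j)}_1\|_{1,\alpha,\Gamma}+\|R^{(j)}_2\|_{0,\alpha,\Gamma}\le C$. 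Each bracket vanishes identically when $\Gamma$ is replaced by its tangent line at $x_0$ — this is the whole point of the mirror point $\hat x_j$ and of the exact cancellation afforded by $r$ and $t$. Expanding $|x-x_j|^2$ and $|x-\hat x_j|^2$ for $x\in\Gamma$ in the arc length $s$ and using $\Gamma\in C^{2,\lambda}$ (hence $(x-x_0)\cdot n(x_0)=O(|x-x_0|^2)$ on $\Gamma$), one checks that, after rescaling $s=h_j\sigma$ with $h_j:=\delta/j$, each bracket approaches a fixed smooth limit profile (tending to a constant at $\sigma\to\pm\infty$), with the first bracket carrying an extra factor $h_j$; consequently the brackets are bounded in $L^p(\Gamma)$ for every $p\in[1,\infty]$ and the first one is bounded in $H^1(\Gamma\cap B_R)$. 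Hence $\{\widetilde g^{(j)}_1\}$ is bounded in $L^\infty(\Gamma)\cap H^1(\Gamma\cap B_R)$ and $\{\widetilde g^{(j)}_2\}$ in $L^\infty(\Gamma)$, uniformly in $j$, for any fixed ball $B_R\supset\ov{D_0}$.

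\emph{The $H^1$ estimate, and conclusion.} By Theorem \ref{thm4.7} and Remark \ref{remark4.8}, $v^{(j)}_-=\mu^{-1}(\mathcal{D}^-_{k_-,h_+}\varphi_j+\mathcal{S}^-_{k_-,h_+}\psi_j)$ with $(\varphi_j,\psi_j)=\mathcal{M}^{-1}(\widetilde g^{(j)}_1,\widetilde g^{(j)}_2)$ bounded in $[L^\infty(\Gamma)]^2$. Feeding $(\varphi_j,\psi_j)\in[L^\infty]^2\subset[L^2_{\mathrm{loc}}]^2$ back into the first line of $\mathcal{M}(\varphi_j,\psi_j)=(\widetilde g^{(j)}_1,\widetilde g^{(j)}_2)$, and using that the boundary operators $S^\pm_{k_\pm},K^\pm_{k_\pm}$ (weakly singular, continuous kernels on the $C^{2,\lambda}$ curve) are smoothing of order one, hence map $L^2_{\mathrm{loc}}(\Gamma)$ into $H^1_{\mathrm{loc}}(\Gamma)$, together with $\widetilde g^{(j)}_1\in H^1(\Gamma\cap B_R)$, yields $\{\varphi_j\}$ bounded in $H^{1/2}(\Gamma\cap B_R)$. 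The standard mapping properties $\mathcal{S}^-_{k_-,h_+}\!:H^{-1/2}(\Gamma)\to H^1_{\mathrm{loc}}(D^-)$ and $\mathcal{D}^-_{k_-,h_+}\!:H^{1/2}(\Gamma)\to H^1_{\mathrm{loc}}(D^-)$, applied after a smooth cutoff localizing the densities near $B_R$ (the far part being $W^{1,\infty}(D_0)$-bounded by the kernel decay in (\ref{2.12}) and $(\varphi_j,\psi_j)\in[L^\infty]^2$), then give $\|v^{(j)}_-\|_{H^1(D_0)}\le C$ uniformly. Finally, since $x_j\notin\ov{D_0}$ and $D_0$ is bounded, Lemma \ref{lem5.1} gives $\|\Phi_{k_-}(\cdot,x_j)-\Phi_0(\cdot,x_j)\|_{H^1(D_0)}\le C$, whence
\[
\Big\|u^{(j)}_--\tfrac{2}{\mu+1}\Phi_0(\cdot,x_j)\Big\|_{H^1(D_0)}\le\|v^{(j)}_-\|_{H^1(D_0)}+\tfrac{2}{\mu+1}\big\|\Phi_{k_-}(\cdot,x_j)-\Phi_0(\cdot,x_j)\big\|_{H^1(D_0)}\le C .
\]

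\emph{Main obstacle.} The essential difficulty is the uniform control of the reduced data: one must quantify how much ``reflection in $\Gamma$'' deviates from ``reflection in the tangent line at $x_0$'' and verify that, after the sharp cancellation built into $r$ and $t$, what remains is bounded — this is exactly where $\Gamma\in C^{2,\lambda}$ enters. One must also be careful that, although $\{\widetilde g^{(j)}_i\}$ is bounded in $L^p(\Gamma)$ and in $H^1(\Gamma\cap B_R)\times L^\infty(\Gamma)$, it is \emph{not} bounded in $BC^{1,\alpha}(\Gamma)\times BC^{0,\alpha}(\Gamma)$ (the relevant H\"older seminorms grow like $h_j^{-\alpha}$), so Lemma \ref{lem4.5} and the gradient part of Theorem \ref{thm4.7} cannot be applied verbatim; the $H^1(D_0)$ bound has to be extracted either from the Sobolev mapping properties of the layer operators as above, or equivalently from a Rellich-type local energy identity across $\Gamma$ in the spirit of the proof of Theorem \ref{thm3.3}, using that $\{\widetilde g^{(j)}_i\}$ is bounded in $L^2(\Gamma\cap B_R)$.
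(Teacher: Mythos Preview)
Your argument is correct at the level of a detailed sketch, and it takes a genuinely different route from the paper's proof.

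The paper does \emph{not} introduce the mirror point $\hat x_j$ or the Fresnel--type coefficients $r=\tfrac{1-\mu}{1+\mu}$, $t=\tfrac{2}{1+\mu}$. Instead it subtracts only $\mu^{-1}\Phi_{k_-}(\cdot,x_j)$ from $u^{(j)}_-$, which (via Lemma~\ref{lem5.1}) makes the Neumann datum $g_{2,j}$ uniformly $BC^{0,\alpha}$ but leaves in $g_{1,j}$ a logarithmically singular piece $(\mu^{-1}-1)\Phi_{k_+}(\cdot,x_j)$. That singular piece is then isolated by a cutoff; the resulting densities are controlled only in $L^p(\Gamma)$ (Remark~\ref{remark4.8}), and the paper bootstraps through the first row of $\mathcal{M}\chi=G$ to show $\varphi_j-\tfrac{2(1-\mu)}{1+\mu}\Phi_{k_+}(\cdot,x_j)$ is bounded in $H^{1/2}$ locally. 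A further harmonic--function argument on $D_0$ (the auxiliary function $\widetilde f_j$) then converts this into the $H^1(D_0)$ bound, and the coefficient $\tfrac{2}{1+\mu}$ only appears at the very end as $\tfrac1\mu-\tfrac{1-\mu}{\mu(1+\mu)}$.

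Your approach front--loads the algebra: by building the flat--interface reflected/transmitted pair into the ansatz, both residual data $\widetilde g^{(j)}_1,\widetilde g^{(j)}_2$ are already bounded in $L^\infty(\Gamma)$, and the target coefficient is manifest. The price is the curved--boundary estimate for the brackets $[\Phi_0(\cdot,x_j)-\Phi_0(\cdot,\hat x_j)]$ and $[\pa_n\Phi_0(\cdot,x_j)+\pa_n\Phi_0(\cdot,\hat x_j)]$ on $\Gamma$, which is where you genuinely need $\Gamma\in C^{2,\lambda}$ (so that $(x-x_0)\cdot n(x_0)=O(|x-x_0|^2)$); your rescaling argument handles this correctly. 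After that, both proofs run through essentially the same machinery: $L^p$--invertibility of $\mathcal{M}$, the smoothing of $S^\pm,K^\pm$ from $L^2\hookrightarrow H^{-1/2}$ into $H^{1/2}$ on $\pa\Omega$ (the paper cites \cite[Corollary~3.7]{DR13} for this), localisation of the densities near $B_R$ with the far part controlled by the kernel decay (\ref{2.11})/(\ref{2.12}), and the mapping $\mathcal D^-:H^{1/2}\to H^1_{\rm loc}$, $\mathcal S^-:H^{-1/2}\to H^1_{\rm loc}$. Your ``Main obstacle'' paragraph correctly flags that the $BC^{1,\alpha}\times BC^{0,\alpha}$ part of Theorem~\ref{thm4.7} is unavailable uniformly in $j$; the paper faces the same issue and resolves it the same way, through the Sobolev mapping properties rather than the H\"older theory.

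In short: your mirror--point decomposition is cleaner and more geometric, and it reproduces the paper's conclusion with less bookkeeping; the paper's approach is more mechanical but does not require guessing the right reference pair in advance, and its bootstrap through the integral equation is what gets reused verbatim in Theorem~\ref{thm3.2} for the hypersingular source.
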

\begin{proof}
%	By Lemma \ref{lem2.4} and Remark \ref{remark2.5}, we see that $\Phi_{k_+}(x,y_j)$ satisfies the UPRC in $D^+$ and $\Phi_{k_-}(x,x_j)$ satisfies the DPRC for $k_-$ in $D^-$, which implies that $(u^{(j)}_+-(1-\mu)(\mu+1)^{-1}\Phi_{k_+}(x,y_j),u^{(j)}_--\mu^{-1}\Phi_{k_-}(x,x_j))$ is the unique solution to the problem TSP with the boundary data $(g_{1,j},g_{2,j})$, where $g_{m,j}=\wid g_{m,j}+\hat g_{m,j}$, $m=1,2$ with
%	\ben
%	&&\wid g_{1,j}=\frac{2}{\mu+1}\left(\Phi_{k_-}(x,x_j)-\Phi_{k_+}(x,x_j)\right), \\
%	&&\wid g_{2,j}=\frac{2\mu}{\mu+1}\frac{\pa}{\pa n(x)}\left(\Phi_{k_-}(x,x_j)-\Phi_{k_+}(x,x_j)\right), 
%	\enn
%	and 
%	\ben
%	&&\hat g_{1,j}=\frac{1-\mu}{\mu+1}\left(\Phi_{k_+}(x,x_j)-\Phi_{k_+}(x,y_j)\right), \\
%	&&\hat g_{2,j}=\frac{\mu-1}{\mu+1}\frac{\pa}{\pa n(x)}\left(\Phi_{k_+}(x,x_j)+\Phi_{k_+}(x,y_j)\right).
%	\enn
%	Denote by $(v^{(j)}_+,v^{(j)}_-)$ and $(w^{(j)}_+,w^{(j)}_-)$ the unique solutions to the TSP with the boundary data $(\wid g_{1,j},\wid g_{2,j})$ and $(\hat g_{1,j},\hat g_{2,j})$, respectively. By the regularity derived in Lemma \ref{lem5.1}, the Sobolev embeddings and the decay properties of the Hankel functions at infinity, we see that $\|\wid g_{1,j}\|_{1,\alpha,\Gamma}+\|\wid g_{2,j}\|_{0,\alpha,\Gamma}\leq C$ uniformly for $j\in\N$, which implies by Theorem \ref{thm4.7} that $\|v^{(j)}_+\|_{1,\ov D^+_{h_+}}+\|v^{(j)}_-\|_{1,\ov D^-_{h_-}}\leq C$
%	uniformly for $j\in\N$. 
By Lemma \ref{lem2.4} and Remark \ref{remark2.5}, we see that $\Phi_{k_-}(x,x_j)$, $j\in\N$, satisfy the DPRC for $k_-$ in $D^-$, which implies that $(u^{(j)}_+,u^{(j)}_--\mu^{-1}\Phi_{k_-}(x,x_j))$ is the unique solution to the TSP with the boundary data 
\ben
g_{1,j}=\mu^{-1}\Phi_{k_-}(x,x_j)-\Phi_{k_+}(x,x_j), \\
g_{2,j}=\frac{\pa\Phi_{k_-}(x,x_j)}{\pa n(x)}-\frac{\pa\Phi_{k_+}(x,x_j)}{\pa n(x)}.
\enn
By the regularity derived in Lemma \ref{lem5.1}, the Sobolev embedding and the decay property of the Hankel functions at infinity, we have $\|g_{2,j}\|_{0,\alpha,\Gamma}\leq C$ uniformly for $j\in\N$. Choose $R>0$ large enough such that $\ov D_0\subset B_R(x_0)$. Then there exists a cut off function $\chi\in C_0^\infty(\R^2)$, $0\leq\chi\leq1$ and 
\begin{align*}
	\chi(x)=\left\{
	\begin{array}{ll}
		0, & {\rm in}~\R^2\setminus\ov B_{R+1}(x_0), \\[2mm]
		1, & {\rm in}~B_R(x_0). 
	\end{array}
	\right.
\end{align*}
We can also find a $C^{2,\alpha}$ bounded domain $\Om$ satisfying that $B_{R+1}(x_0)\cap D^-\subset\Om\subset D^-$ and $\Gamma\cap\pa\Om=\Gamma\cap B_{R+1}(x_0)$. Define $(w_{1,j},w_{2,j})$ as the solutions to the TSP with $\tilde g_{1,j}=(\mu^{-1}-1)\chi(x)\Phi_{k_+}(x,x_j)$ and $\tilde g_{2,j}=0$. Clearly, $\|g_{1,j}-\tilde g_{1,j}\|_{1,\alpha,\Gamma}\leq C$ uniformly for $j\in\N$, which yields that $\|u^{(j)}_--\mu^{-1}\Phi_{k_-}(x,x_j)-w_{2,j}\|_{H^1(D_0)}\leq C$ by Theorem \ref{thm4.7}. 

Further, we know that $w_{2,j}$ has the form 
$$w_{2,j}(x)=\mu^{-1}[(\mathcal{D}_{k_-,h}^-\varphi_j)(x)+(\mathcal{S}_{k_-,h}^-\psi_j)(x)],~~x\in D^-, $$
for some $h\in\R$ with $-h<f^-<f^+<h$, where $\Phi_j=(\varphi_j,\psi_j)^T$ satisfies $\mathcal{M}\Phi_j=G_j$ with $G_j=(\tilde g_{1,j},\tilde g_{2,j})^T$. Direct calculation shows that $\|\tilde g_{1,j}\|_{L^p(\Gamma)}\leq C$ for $1\leq p<\infty$, which indicates by remark \ref{remark4.8} that $\varphi_j,\psi_j$ are uniformly bounded in $L^p(\Gamma)$ with $1\leq p<\infty$ for $j\in\N$. Now define $\widetilde w_{2,j}(x)=\mu^{-1}[\mathcal{D}_{k_-,h}^-(\chi\varphi_j)(x)+\mathcal{S}_{k_-,h}^-(\chi\psi_j)](x)$, $x\in D^-$. For a moment, we extend $\varphi_j,\psi_j$ to be zero in $\R^2\setminus\Gamma$ and still denote by $\varphi_j,\psi_j$. Then to be specific, 
\begin{align*}
\widetilde w_{2,j}(x)\;&=\frac{1}{\mu}\left(\int_{\pa\Om}\frac{\pa}{\pa n(y)}G_{k_-}^-(x,y;h)\chi(y)\varphi_j(y)ds(y)\right.\\
&\qquad\quad\left.+\int_{\pa\Om}G_{k_-}^-(x,y;h)\chi(y)\psi_j(y)ds(y)\right), ~~x\in D^-. 
\end{align*}
We can also obtain that 
\begin{align*}
w_{2,j}(x)-\widetilde w_{2,j}(x)\;&=\frac{1}{\mu}\left(\int_{\Gamma\setminus B_R(x_0)}G_{k_-}^-(x,y;h)(1-\chi(y))\varphi_j(y)ds(y)\right.\\
&\qquad\quad\left.+\int_{\Gamma\setminus B_R(x_0)}\frac{\pa}{\pa n(y)}G_{k_-}^-(x,y;h)(1-\chi(y))\psi_j(y)ds(y)\right)
\end{align*}
for $x\in D^-$. Thus, by (\ref{2.8}) and the positive distance between $\Gamma\setminus B_R(x_0)$ and $D_0$, we deduce that for $x\in\ov D_0$ 
\begin{align*}
|w_{2,j}(x)-\widetilde w_{2,j}(x)|\;&\leq C\int_{\Gamma\setminus B_R(x_0)}(1+|x_1-y_1|)^{-3/2}(|\varphi_j(y)|+|\psi_j(y)|)ds(y) \\
&\leq C\int_{\Gamma\setminus B_R(x_0)}(|\varphi_j(y)|+|\psi_j(y)|)ds(y)\\
&\leq C(\|\varphi_j\|_{L^1(\Gamma)}+\|\psi_j\|_{L^1(\Gamma)})\leq C
\end{align*}
with the constant $C>0$ independent of $x$ and $j\in\N$. Using (\ref{2.8}) again and arguing analogously as above, we can also obtain that $|\na( w_{2,j}-\widetilde w_{2,j})(x)|\leq C$ for $x\in\ov D_0$, which implies $\|w_{2,j}-\widetilde w_{2,j}\|_{H^1(D_0)}\leq C$ uniformly for $j\in\N$. 

Now we investigate $\widetilde w_{2,j}$. By the bounded embedding from $L^2(\pa\Om)$ into $H^{-1/2}(\pa\Om)$ and \cite[Corollary 3.8]{DR13}, we derive that 
$$\left\|\int_{\pa\Om}\Phi_{k_-}(x,y)\chi(y)\psi_j(y)ds(y)\right\|_{H^1(D_0)}\leq C. $$
Since $|x-y_h'|\geq\varepsilon>0$ for all $x\in D_0$ and $y\in \Gamma\cap B_{R+1}(x_0)$, we have 
\ben
&&\left\|\int_{\Gamma\cap B_{R+1}(x_0)}\frac{\pa}{\pa {n(y)}}\left(\Phi_{k_-}(x,y_h')+P_{k_-}^-(x-y_h')\right)\chi(y)\varphi_j(y)ds(y)\right\|_{H^1(D_0)}\\
&&\;+\left\|\int_{\Gamma\cap B_{R+1}(x_0)}\left(\Phi_{k_-}(x,y_h')+P_{k_-}^-(x-y_h')\right)\chi(y)\psi_j(y)ds(y)\right\|_{H^1(D_0)}\leq C  
\enn
from the regularity of $\Phi_{k_-}(x,y_h')$ and $P_{k_-}^-(x-y_h')$, which further implies that 
$$\left\|\widetilde w_{2,j}(x)-\frac{1}{\mu}\int_{\pa\Om}\frac{\pa\Phi_{k_-}(x,y)}{\pa n(y)}\chi(y)\varphi_j(y)ds(y)\right\|_{H^1(D_0)}\leq C$$
uniformly for $j\in\N$. Due to the positive distance between $\pa\Om\setminus B_R(x_0)$ and $D_0$, there exists a $\varepsilon_0>0$ small enough such that $\pa\Om\setminus B_{R-\varepsilon_0}(x_0)$ also has a positive distance from $D_0$. In other words, $D_0\subset\subset B_{R-\varepsilon_0}(x_0)$. Then by the first equation in $\mathcal{M}\Phi_j=G_j$, we see that 
\begin{align*}
\varphi_j(x)-\frac{2(1-\mu)}{1+\mu}\chi(x)\Phi_{k_+}(x,x_j)&=\frac{2\mu}{1+\mu}\left(\mu^{-1}K_{k_-,h}^--K_{k_+,-h}^+\right)(\varphi_j)(x)\\
&\quad+\frac{2\mu}{1+\mu}\left(\mu^{-1}S_{k_-,h}^--S_{k_+,-h}^+\right)(\psi_j)(x),~x\in\Gamma. 
\end{align*}
Analyzing similarly as $w_{2,j}-\widetilde w_{2,j}$, it follows from the positive distance between $\Gamma\setminus B_R(x_0)$ and $\Gamma\cap B_{R-\varepsilon_0}(x_0)$ that 
\ben
&&\left\|\left(\mu^{-1}K_{k_-,h}^--K_{k_+,-h}^+\right)[(1-\chi)\varphi_j]\right\|_{H^{1/2}(\Gamma\cap B_{R-\varepsilon_0}(x_0))}\\
&&\;+\left\|\left(\mu^{-1}S_{k_-,h}^--S_{k_+,-h}^+\right)[(1-\chi)\psi_j]\right\|_{H^{1/2}(\Gamma\cap B_{R-\varepsilon_0}(x_0))}\leq C. 
\enn
By the bounded embedding from $L^2(\pa\Om)$ into $H^{-1/2}(\pa\Om)$ and \cite[Corollary 3.7]{DR13}, we obtain that 
\ben
&&\left\|\int_{\pa\Om}\left(\frac{1}{\mu}\frac{\pa\Phi_{k_-}(x,y)}{\pa n(y)}-\frac{\pa\Phi_{k_+}(x,y)}{\pa n(y)}\right)\chi(y)\varphi_j(y)ds(y)\right\|_{H^{1/2}(\pa\Om)}\\
&&\;+\left\|\int_{\pa\Om}\left(\mu^{-1}\Phi_{k_-}(x,y)-\Phi_{k_+}(x,y)\right)\chi(y)\psi_j(y)ds(y)\right\|_{H^{1/2}(\pa\Om)}\leq C. 
\enn
Moreover, because of $|x-y_{\pm h}'|\geq\varepsilon>0$ for all $x\in\Gamma\cap B_{R-\varepsilon_0}(x_0)$ and $y\in\Gamma\cap B_{R+1}(x_0)$, it is deduced that 
\ben
&&\left\|\int_{\Gamma\cap B_{R+1}(x_0)}\frac{\pa}{\pa {n(y)}}\left(\Phi_{k_\pm}(x,y_{\mp h}')+P_{k_\pm}^\pm(x-y_{\mp h}')\right)\chi(y)\varphi_j(y)ds(y)\right\|_{H^{1/2}(\Gamma\cap B_{R-\varepsilon_0}(x_0))}\\
&&\;+\left\|\int_{\Gamma\cap B_{R+1}(x_0)}\left(\Phi_{k_\pm}(x,y_{\mp h}')+P_{k_\pm}^\pm(x-y_{\mp h}')\right)\chi(y)\psi_j(y)ds(y)\right\|_{H^{1/2}(\Gamma\cap B_{R-\varepsilon_0}(x_0))}\leq C, 
\enn
which indicates that 
$$\left\|\varphi_j(x)-\frac{2(1-\mu)}{1+\mu}\Phi_{k_+}(x,x_j)\right\|_{H^{1/2}(\Gamma\cap B_{R-\varepsilon_0}(x_0))}\leq C$$
uniformly for $j\in\N$. Then from \cite[Theorem 6.13]{WM00}, we have 
\ben
&&\left\|\int_{\pa\Om}\frac{\pa\Phi_{k_-}(x,y)}{\pa n(y)}\chi(y)\left(\varphi_j(y)-\frac{2(1-\mu)}{1+\mu}\Phi_{k_+}(y,x_j)\right)ds(y)\right\|_{H^1(D_0)}\\
&&\leq C\left(\left\|\varphi_j-\frac{2(1-\mu)}{1+\mu}\Phi_{k_+}(\cdot,x_j)\right\|_{H^{1/2}(\Gamma\cap B_{R-\varepsilon_0}(x_0))}\right.\\
&&\qquad\quad+\left.\left\|\chi\left(\varphi_j-\frac{2(1-\mu)}{1+\mu}\Phi_{k_+}(\cdot,x_j)\right)\right\|_{H^{-2}(\pa\Om)}\right)\leq C, 
\enn
and hence 
$$\left\|w_{2,j}(x)-\frac{2(1-\mu)}{\mu(1+\mu)}\int_{\pa\Om}\frac{\pa\Phi_{k_-}(x,y)}{\pa n(y)}\chi(y)\Phi_{k_+}(y,x_j)ds(y)\right\|_{H^1(D_0)}\leq C. $$
Since $\{x_j\}_{j\in\N}$ has a positive distance from $\pa\Om\setminus\ov D_0$ and $\pa D_0\setminus\Gamma$, we derive 
$$\left\|w_{2,j}(x)-\frac{2(1-\mu)}{\mu(1+\mu)}\int_{\pa D_0}\frac{\pa\Phi_0(x,y)}{\pa n(y)}\Phi_0(y,x_j)ds(y)\right\|_{H^1(D_0)}\leq C$$ 
by the regularity of $\Phi_{k_\pm}-\Phi_0$. 

Now define
\ben
\wid f_j(x)=2\int_{\pa D_0}\frac{\pa\Phi_0(x,y)}{\pa n(y)}\Phi_0(y,x_j)ds(y),~~~x\in D_0.
\enn
Set $h_j(x)=\Phi_0(x,x_j)$. Then we see that
\ben
\left\{
\begin{array}{ll}
	\Delta (\wid f_j+h_j)=0~~~&{\rm in}~D_0,\\[3mm]
	\displaystyle\wid f_j(x)+h_j(x)=2\int_{\pa D_0}\frac{\pa\Phi_0(x,y)}{\pa n(y)}\Phi_0(y,x_j)ds(y)~~~&{\rm on}~\pa D_0.
\end{array}
\right.
\enn
Again by \cite[Corollary 3.7]{DR13} we obtain that the trace of $\wid f_j+h_j$ on $\pa D_0$ is uniformly bounded in $H^{1/2}(\pa D_0)$. Thus the classic elliptic regularity gives $\|\wid f_j+h_j\|_{H^1(D_0)}\leq C$ uniformly for $j\in\N$. It immediately follows that 
$$\left\|w_{2,j}(x)+\frac{1-\mu}{\mu(1+\mu)}\Phi_0(x,x_j)\right\|_{H^1(D_0)}\leq C.$$
Therefore, finally we deduce that 
$$\left\|u^{(j)}_--\left(\frac{1}{\mu}-\frac{1-\mu}{\mu(1+\mu)}\right)\Phi_0(x,x_j)\right\|_{H^1(D_0)}\leq C$$
uniformly for $j\in\N$, which is the desired estimate. 
\end{proof}
\begin{theorem}\label{thm3.2}
	Suppose the scatterer $(\Gamma,k_-,k_+,\mu)$ is under the assumptions in Theorem {\rm \ref{thm4.7}}. Suppose further $\Gamma\in C^{2,\lambda}$ $(or~f\in C^{2,\lambda}(\R))$ with $\lambda\in (0,1]$. Follow the notations $x_0,x_j$ and $D_0$ in Theorem {\rm \ref{thm3.1}}. Define $y_j:=x_0+(\delta/j)n(x_0)\in D^-$, $j\in\N$. Then the unique solutions $(\wid u^{(j)}_+,\wid u^{(j)}_-)$ to the TSP corresponding to the incident hypersingular point source $\wid u^i_j=\na_x\Phi_{k_+}(x,x_j)\cdot n(x_0)$ satisfy the estimate 
	$$\left\|\wid u^{(j)}_--\frac{2}{\mu+1}\na_x\Phi_0(x,x_j)\cdot n(x_0)-v_j\right\|_{H^1(D_0)}\leq C$$
	uniformly for $j\in\N$, where 
	$$v_j(x)=\frac{2\mu(1-\mu)}{(\mu+1)^2}\int_{\pa D_0}\frac{\pa\Phi_0(x,y)}{\pa n(y)}(\na_x\Phi_0(y,x_j)+\na_x\Phi_0(y,y_j))\cdot n(x_0)ds(y),~x\in D_0, $$
	with $\|v_j\|_{L^2(D_0)}\leq C$ uniformly in $j\in\N$. 
\end{theorem}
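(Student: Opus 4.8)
The plan is to run the argument of Theorem \ref{thm3.1} almost verbatim, with the hypersingular source $\wid u^i_j=\na_x\Phi_{k_+}(\cdot,x_j)\cdot n(x_0)$ in place of the point source $\Phi_{k_+}(\cdot,x_j)$; the only substantive difference is that the incidence is one derivative more singular, and this is exactly what produces the correction term $v_j$. First, since $\na_x\Phi_{k_-}(\cdot,x_j)\cdot n(x_0)$ is a directional derivative of $\Phi_{k_-}(\cdot,x_j)$ it still satisfies the DPRC for $k_-$ in $D^-$ (Lemma \ref{lem2.4}, Remark \ref{remark2.5}), so $\bigl(\wid u^{(j)}_+,\,\wid u^{(j)}_--\mu^{-1}\na_x\Phi_{k_-}(\cdot,x_j)\cdot n(x_0)\bigr)$ solves the TSP with boundary data obtained from those of Theorem \ref{thm3.1} by applying $\na_x(\cdot)\cdot n(x_0)$ (and then $\pa_n$). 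To keep the $g_1$-type datum bounded one sharpens Lemma \ref{lem5.1} via the small-argument expansion of the Hankel function, $\Phi_k(x,y)=\Phi_0(x,y)+c_k+O(|x-y|^2\log|x-y|)$ with $c_k$ a constant, so that $\na(\Phi_{k_-}-\Phi_{k_+})=O(|x-y|\log|x-y|)$ remains bounded and, after cutting off with $\chi$, the $g_1$-datum equals $(\mu^{-1}-1)\chi(x)\na_x\Phi_{k_+}(x,x_j)\cdot n(x_0)$ up to a remainder bounded in $BC^{1,\alpha}(\Gamma)$ uniformly in $j$; the $g_2$-datum carries a second derivative of $\Phi_{k_-}-\Phi_{k_+}$, hence $O(\log|x-x_j|)$ terms growing logarithmically in $j$ on $\Gamma$, but these are bounded in $L^2(\Gamma)$ and concentrated near $x_0$, so they are regularized by the layer operators and absorbed into an $H^1(D_0)$-bounded remainder.

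Next, following Theorem \ref{thm3.1}, one represents the localized part of $\wid u^{(j)}_-$, modulo a term bounded in $H^1(D_0)$ uniformly in $j$, by $\mu^{-1}\bigl(\mathcal D^-_{k_-,h}\varphi_j+\mathcal S^-_{k_-,h}\psi_j\bigr)$; reads off from the first scalar equation of $\mathcal M\chi_j=G_j$ that the singular part of $\varphi_j$ is $\tfrac{2(1-\mu)}{1+\mu}$ times the surface restriction of $\na_x\Phi_{k_+}(\cdot,x_j)\cdot n(x_0)$; localizes with $\chi$, replaces $\pa\Om$ by $\pa D_0$ and $k_\pm$ by $0$ using the regularity of $\Phi_{k_\pm}-\Phi_0$; and arrives, up to $H^1(D_0)$-bounded terms, at
\ben
\wid u^{(j)}_-&=&\mu^{-1}\na_x\Phi_0(x,x_j)\cdot n(x_0)\\
&&+\,\frac{2(1-\mu)}{\mu(1+\mu)}\int_{\pa D_0}\frac{\pa\Phi_0(x,y)}{\pa n(y)}\,\na_x\Phi_0(y,x_j)\cdot n(x_0)\,ds(y),
\enn
with the same constants as in the corresponding step of Theorem \ref{thm3.1}.

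The genuinely new point is the treatment of this double-layer potential. In Theorem \ref{thm3.1} the analogous object was $\tfrac12\wid f_j$ with $\wid f_j=2\int_{\pa D_0}\pa_{n(y)}\Phi_0(x,y)\Phi_0(y,x_j)\,ds(y)$, and the jump relation showed $\wid f_j+\Phi_0(\cdot,x_j)$ to be harmonic in $D_0$ with trace $2K_0[\Phi_0(\cdot,x_j)]$ (with $K_0$ the boundary double-layer operator), which stays bounded in $H^{1/2}(\pa D_0)$ by \cite[Corollary 3.7]{DR13} because $\Phi_0(\cdot,x_j)|_{\pa D_0}$ is only logarithmically singular; elliptic regularity then gave $\wid f_j=-\Phi_0(\cdot,x_j)+O_{H^1(D_0)}(1)$ and the coefficient $\mu^{-1}-\tfrac{1-\mu}{\mu(1+\mu)}=\tfrac{2}{\mu+1}$. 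Here the surface restriction $\na_x\Phi_0(\cdot,x_j)\cdot n(x_0)\big|_{\pa D_0}=-\tfrac1{2\pi}\tfrac{(y-x_j)\cdot n(x_0)}{|y-x_j|^2}\big|_{\pa D_0}$ is one derivative worse: near $x_0$ it behaves like a Poisson kernel concentrating at $x_0$, so the boundary double-layer operator applied to it is no longer bounded in $H^{1/2}(\pa D_0)$ and the clean argument breaks. The remedy is the image $y_j=x_0+(\delta/j)n(x_0)$ of $x_j$ across $x_0$: since $(y-x_j)\cdot n(x_0)=\delta/j+O(|y-x_0|^2)$ while $(y-y_j)\cdot n(x_0)=-\delta/j+O(|y-x_0|^2)$ over denominators that agree to leading order, the sum $\bigl(\na_x\Phi_0(\cdot,x_j)+\na_x\Phi_0(\cdot,y_j)\bigr)\cdot n(x_0)\big|_{\pa D_0}$ is bounded in $H^{1/2}(\pa D_0)$ uniformly in $j$ (indeed, in local coordinates its leading part is $\propto \tfrac{t^2(t^2-(\delta/j)^2)}{(t^2+(\delta/j)^2)^2}$, which differs from a constant by a fixed $\dot H^{1/2}$ function). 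One therefore adds and subtracts the double-layer potential of this sum — which up to the scalar $\tfrac{2\mu(1-\mu)}{(\mu+1)^2}$ is precisely $v_j$ — so that the remaining difference is harmonic in $D_0$ with uniformly bounded $H^{1/2}(\pa D_0)$ trace, hence $O_{H^1(D_0)}(1)$ by the same \cite[Corollary 3.7]{DR13} and elliptic regularity as before. (In fact the same estimate then shows $v_j$ itself to be bounded in $H^1(D_0)$; it is kept explicit only because this is the natural endpoint of the reduction.) Collecting the explicit terms and bookkeeping the $\mu$-dependent constants yields the stated estimate.

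The step I expect to be the main obstacle is this last paragraph: showing, uniformly in $j$, that the boundary integral operators applied to densities which stay $O(1)$ in $L^1(\Gamma)$ but blow up in $L^p(\Gamma)$ for every $p>1$ produce only $H^1(D_0)$-bounded errors once the image-symmetrized term $v_j$ has been split off — i.e., that the Poisson-kernel-type surface singularity of $\na_x\Phi_0(\cdot,x_j)\cdot n(x_0)$ is controlled by exactly the cancellation with $y_j$ and nothing else survives. The logarithmically growing pieces of the $g_2$-type datum add bookkeeping but, being $L^2(\Gamma)$-bounded and localized near $x_0$, present no essential difficulty.
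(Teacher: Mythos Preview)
Your overall strategy---subtract the expected singularity, localize, identify the leading part of the density from the first row of $\mathcal M\chi=G$, and reduce the remaining double-layer potential---is the same machine as in Theorem~\ref{thm3.1}, but the way you feed it is not the paper's and this creates a real gap you yourself flag.

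The paper does \emph{not} subtract $\mu^{-1}\na_x\Phi_{k_-}(\cdot,x_j)\cdot n(x_0)$ from $\wid u^{(j)}_-$ and then bring in $y_j$ only at the end. Instead it introduces the image point already in the \emph{ansatz}: since $y_j\in D^-$, the function $\na_x\Phi_{k_+}(\cdot,y_j)\cdot n(x_0)$ is a legitimate UPRC Helmholtz solution in $D^+$, so one considers the pair
\[
\Bigl(\wid u^{(j)}_+ + \tfrac{1-\mu}{\mu+1}\,\na_x\Phi_{k_+}(\cdot,y_j)\cdot n(x_0),\ \ \wid u^{(j)}_- - \tfrac{2}{\mu+1}\,\na_x\Phi_{k_-}(\cdot,x_j)\cdot n(x_0)\Bigr).
\]
With these coefficients the boundary data split as $g_{m,j}=\wid g_{m,j}+\hat g_{m,j}$ where $\wid g_{m,j}$ involves only $\na_x(\Phi_{k_-}-\Phi_{k_+})(\cdot,x_j)$ (controlled by Lemma~\ref{lem5.1} exactly as in Theorem~\ref{thm3.1}), and
\[
\hat g_{1,j}\ \propto\ \na_x\bigl(\Phi_{k_+}(\cdot,x_j)+\Phi_{k_+}(\cdot,y_j)\bigr)\cdot n(x_0),\qquad
\hat g_{2,j}\ \propto\ \pa_n\na_x\bigl(\Phi_{k_+}(\cdot,x_j)-\Phi_{k_+}(\cdot,y_j)\bigr)\cdot n(x_0).
\]
The point of the reflection is that both of these are \emph{uniformly bounded in $L^\infty(\Gamma)$}: the Poisson-kernel pieces cancel. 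Hence the right-hand side of the integral equation lives in the same spaces as in Theorem~\ref{thm3.1}, the densities $\varphi_j,\psi_j$ stay bounded in $L^p(\Gamma)$ for every $p<\infty$, and the entire chain of estimates there carries over verbatim; the term $v_j$ then drops out as the double-layer of the singular part $\tfrac{2\mu}{1+\mu}\hat g_{1,j}$ of $\varphi_j$, with the coefficient $\tfrac{2\mu(1-\mu)}{(\mu+1)^2}$.

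By contrast, in your route the $g_1$-datum $(\mu^{-1}-1)\chi\,\na_x\Phi_{k_+}(\cdot,x_j)\cdot n(x_0)$ is bounded only in $L^1(\Gamma)$ and blows up in every $L^p(\Gamma)$, $p>1$. Several steps that you borrow from Theorem~\ref{thm3.1} then fail as written: the bound on $\mathcal S^-_{k_-,h}(\chi\psi_j)$ in $H^1(D_0)$ used $L^2\hookrightarrow H^{-1/2}$; the identification $\|\varphi_j-\tfrac{2(1-\mu)}{1+\mu}\,\na_x\Phi_{k_+}(\cdot,x_j)\cdot n(x_0)\|_{H^{1/2}}\le C$ used $H^{1/2}$-mapping properties of $K$ and $S$ on $\chi\varphi_j,\chi\psi_j\in L^2$; and you cannot separate $\psi_j$ from $\varphi_j$ in $\mathcal M$ to recover $L^2$-control on $\psi_j$ alone. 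Your final paragraph correctly names this as the obstacle, but it is not just ``bookkeeping''---with $L^1$-only densities the argument of Theorem~\ref{thm3.1} does not go through, and the remedy is precisely to symmetrize with $y_j$ \emph{before} invoking the integral equation, i.e.\ in the ansatz, which is what the paper does.
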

\begin{proof}
	We know from Lemma \ref{lem2.4} and Remark \ref{remark2.5} that $\na_x\Phi_{k_+}(x,y_j)\cdot n(x_0)$ satisfy the UPRC for $k_+$ in $D^+$ and $\na_x\Phi_{k_-}(x,x_j)\cdot n(x_0)$ satisfy the DPRC for $k_-$ in $D^-$. Then it can be verified that $(\wid u^{(j)}_++(1-\mu)(\mu+1)^{-1}\na_x\Phi_{k_+}(x,y_j)\cdot n(x_0),\wid u^{(j)}_--2(\mu+1)^{-1}\na_x\Phi_{k_-}(x,x_j)\cdot n(x_0))$ solves the TSP with the boundary data $g_{m,j}=\wid g_{m,j}+\hat g_{m,j}$, $m=1,2$, where 
	\ben
	&&\wid g_{1,j}=\frac{2}{\mu+1}\na_x\left(\Phi_{k_-}(x,x_j)-\Phi_{k_+}(x,x_j)\right)\cdot n(x_0), \\
	&&\wid g_{2,j}=\frac{2\mu}{\mu+1}\frac{\pa}{\pa n(x)}\na_x\left(\Phi_{k_-}(x,x_j)-\Phi_{k_+}(x,x_j)\right)\cdot n(x_0), 
	\enn
	and 
	\ben
	&&\hat g_{1,j}=\frac{1-\mu}{\mu+1}\na_x\left(\Phi_{k_+}(x,x_j)+\Phi_{k_+}(x,y_j)\right)\cdot n(x_0), \\
	&&\hat g_{2,j}=\frac{\mu-1}{\mu+1}\frac{\pa}{\pa n(x)}\na_x\left(\Phi_{k_+}(x,x_j)-\Phi_{k_+}(x,y_j)\right)\cdot n(x_0).
	\enn
	Direct calculation shows that $\|\hat g_{1,j}\|_{\infty,\Gamma}+\|\hat g_{2,j}\|_{\infty,\Gamma}\leq C$ uniformly for $j\in\N$. From the proof of Lemma \ref{lem5.1}, we see that $\|\wid g_{1,j}\|_{0,\alpha,\Gamma}+\|\wid g_{1,j}\|_{H^{1/2}(\pa D_0\cap\Gamma)}+\|\wid g_{2,j}\|_{L^p(\Gamma)}\leq C$ uniformly for $j\in\N$ with $1\leq p<\infty$. Therefore, arguing analogously as in the proof of Theorem \ref{thm3.1}, we would obtain the desired estimate. The detailed proof is thus omitted. 
	
	To show the estimate for $v_j$, we first set $\phi_j(x)=(\na_x\Phi_0(x,x_j)+\na_x\Phi_0(x,y_j))\cdot n(x_0)$, $x\in\pa D_0$. It then can be derived that 
	\ben
	&&\left\|\int_{\pa D_0}\frac{\pa\Phi_0(x,y)}{\pa n(y)}\phi_j(y)ds(y)\right\|_{L^2(D_0)}\\
	&&=\sup\limits_{g\in L^2,\|g\|_{L^2(D_0)}=1}\left|\int_{D_0}\left(\int_{\pa D_0}\frac{\pa\Phi_0(x,y)}{\pa n(y)}\phi_j(y)ds(y)\right)g(x)dx\right|\\
	&&=\sup\limits_{g\in L^2,\|g\|_{L^2(D_0)}=1}\left|\int_{\pa D_0}\left(\frac{\pa}{\pa n(y)}\int_{D_0}\Phi_0(x,y)g(x)dx\right)\phi_j(y)ds(y)\right|\\
	&&\leq\sup\limits_{g\in L^2,\|g\|_{L^2(D_0)}=1}\left\|\frac{\pa}{\pa n(y)}\int_{D_0}\Phi_0(x,y)g(x)dx\right\|_{L^1(\pa D_0)}\|\phi_j\|_{L^\infty(\pa D_0)}\\
	&&\leq C\sup\limits_{g\in L^2,\|g\|_{L^2(D_0)}=1}\|g\|_{L^2(D_0)}\|\phi_j\|_{L^\infty(\pa D_0)} \\
	&&\leq C 
	\enn
	uniformly for $j\in\N$, since the volume potential operator is bounded from $L^2(D_0)$ into $H^2(D_0)$ and the boundary trace operator is bounded from $H^2(D_0)$ into $L^1(\pa D_0)$. 
\end{proof}
%\begin{remark}\label{remark3.3}
%	Here, for the convenience of the proof of inverse problem, we note that $v_j=0$ when $\mu=1$. 
%\end{remark}
\begin{remark}\label{remark5.5}
	It should be noticed that in Theorems {\rm \ref{thm3.1}} and {\rm \ref{thm3.2}}, the assumptions in Theorem {\rm \ref{thm4.7}} are  used to only ensure the existence of the singular solutions and have nothing to do with the proof of the singularity. 
\end{remark}

\section{The inverse problem}\label{sec6}
\setcounter{equation}{0}
This section is devoted to the uniqueness proof of the inverse problem. More precisely, we want to identify the scatterer $(\Gamma,k_-,k_+,\mu)$ given $k_+$ and the measurements of the scattered field with the incident point source. In particular, the measurements are taken only on a line segment
$$\Gamma_{c,d}:=\{x=(x_1,x_2)\in\Gamma_c||x_1|\leq d\}$$
with the measurement width $d>0$ and measurement height $c>f^+$. With the previous results on well-posedness and singularity, our proof would be a standard application of the novel technique first proposed in \cite{JBH18}, which is mainly based on constructing a well-posed interior transmission problem in a sufficiently small domain and then using the singularity of the solutions to derive a contradiction. 

Let $D^+_m, D^-_m$ be the upper and lower half planes, respectively, separated by the interface $\Gamma_{f_m}$, $m=1,2$. Denote by $(u_{m,+}(\cdot,y),u_{m,-}(\cdot,y))$ the scattered wave and transmitted wave to the TSP associated with the scatterer $(\Gamma_{f_m},k_{m,-},k_+,\mu_m)$ and the incident point source $u^i(\cdot,y)=\Phi_{k_+}(\cdot,y)$ for $y\in D^+_m$, $m=1,2$. Now we present our main theorem for the inverse problem in this paper. 
\begin{theorem}\label{thm4.1}
	Suppose the scatterers $(\Gamma_{f_m},k_{m,-},k_+,\mu_m)$, $m=1,2$ are under the assumptions in Theorem {\rm \ref{thm3.1}}. If $u_{1,+}(x,y)=u_{2,+}(x,y)$ for all $x,y\in\Gamma_{c,d}$, then $(\Gamma_{f_1},k_{1,-},\mu_1)=(\Gamma_{f_2},k_{2,-},\mu_2)$. 
\end{theorem}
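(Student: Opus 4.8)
\emph{Step 1: extending the data coincidence.} Set $D^\ast:=D^+_1\cap D^+_2$; since $f_1,f_2$ are graphs, $D^\ast$ is a connected open set containing $U_c^+$, and on $D^\ast$ both $u_{m,+}(\cdot,y)$ (for any source $y\in D^+_m$) are real analytic and satisfy the UPRC. Fix $y\in\Gamma_{c,d}$; the difference $u_{1,+}(\cdot,y)-u_{2,+}(\cdot,y)$ is a Helmholtz solution with wave number $k_+$ in $D^\ast$, it is real analytic along the whole line $\Gamma_c\subset D^\ast$, and it vanishes on the segment $\Gamma_{c,d}$, hence on all of $\Gamma_c$; by the UPRC representation (Definition~\ref{def2.2}) with density $0$ it vanishes in $U_c^+$, and by unique continuation it vanishes in $D^\ast$. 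Using the reciprocity relation $u_{m,+}(x,y)=u_{m,+}(y,x)$ to interchange the roles of source and receiver, and repeating the same argument in the source variable, one obtains $u_{1,+}(x,y)=u_{2,+}(x,y)$ for all $x,y\in D^\ast$; equivalently, the total fields $\mathcal U^{(y)}_m:=\Phi_{k_+}(\cdot,y)+u_{m,+}(\cdot,y)$ coincide on $D^\ast$ for every source $y\in D^\ast$.

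\emph{Step 2: localization and an interior transmission problem.} Suppose, for contradiction, that the two scatterers differ. If $\Gamma_{f_1}\neq\Gamma_{f_2}$, say $\{f_1<f_2\}\neq\emptyset$, choose $x_0\in\Gamma_{f_2}$ lying strictly in $D^+_1$ and a ball $B=B_\rho(x_0)\subset D^+_1$ split by $\Gamma_{f_2}$ into $B^\pm=B\cap D^\pm_2$ with $B^+\subset D^\ast$ (case (a)); if $\Gamma_{f_1}=\Gamma_{f_2}=:\Gamma$, take any $x_0\in\Gamma$ (case (b)). With $\delta$ small put $x_j:=x_0-(\delta/j)n(x_0)\in D^+$ (the normal of the surface through $x_0$), and let $(u^{(j)}_{m,\pm})$ be the TSP solutions for $u^i_j=\Phi_{k_+}(\cdot,x_j)$ — replaced by the hypersingular source $\widetilde u^i_j=\nabla_x\Phi_{k_+}(\cdot,x_j)\cdot n(x_0)$ when $\mu_m=1$. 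For $j$ large $x_j\in D^\ast$, so by Step 1 the two total fields agree on $B^+$ (resp. on $D^+$). Comparing the two sets of transmission conditions across $\Gamma_{f_2}$ (resp. $\Gamma$), and using in case (a) that scatterer~$1$'s total field is smooth across $\Gamma_{f_2}$, one gets on the part $\Sigma_0\subset\Gamma$ of the boundary of a small $C^{2,\lambda}$ domain $D_0$ with $B_{2\delta}(x_0)\cap D^-\subset D_0\subset D^-$ (as in Theorems~\ref{thm3.1}--\ref{thm3.2}) the homogeneous coupling $V_j=W_j$, $\partial_n V_j=\mu_\ast^{-1}\partial_n W_j$, where $(V_j,W_j):=(u^{(j)}_{2,-},\mathcal U^{(j)}_1)$ with $\mu_\ast=\mu_2$ and $\Delta W_j+k_+^2W_j=0$ in case (a), and $(V_j,W_j):=(u^{(j)}_{2,-},u^{(j)}_{1,-})$ with $\mu_\ast=\mu_2/\mu_1$ and $\Delta W_j+k_{1,-}^2W_j=0$ in case (b). On $\partial D_0\setminus\Sigma_0$, which stays a fixed positive distance from $x_j$, the Cauchy data of $V_j$ and $W_j$ are bounded uniformly in $j$ by Theorems~\ref{thm3.1}--\ref{thm3.2} and interior elliptic estimates. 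Thus $(V_j,W_j)$ solves a homogeneous interior transmission problem on $D_0$ with interface $\Sigma_0$ and $j$-uniformly bounded data on the remaining boundary.

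\emph{Step 3: contradiction.} Because $k_{2,-}\neq k_+$ in case (a), and in case (b) either $k_{1,-}\neq k_{2,-}$ or $\mu_1\neq\mu_2$ (together with $k_+^2\neq k_{m,-}^2\mu_m$), one can take $\delta$ — hence $D_0$ — small enough that this interior transmission problem is well posed, whence $\|V_j\|_{H^1(D_0)}\le C$ uniformly in $j$. On the other hand, Theorem~\ref{thm3.1} (respectively Theorem~\ref{thm3.2} together with Remark~\ref{remark3.3} when $\mu_2=1$) gives $\|V_j-\tfrac{2}{\mu_2+1}s_j\|_{H^1(D_0)}\le C$ with $s_j=\Phi_0(\cdot,x_j)$ (respectively $s_j=\nabla_x\Phi_0(\cdot,x_j)\cdot n(x_0)$), while $\|s_j\|_{H^1(D_0)}\to\infty$ as $x_j\to x_0\in\partial D_0$ — for the point source $\|\nabla_x\Phi_0(\cdot,x_j)\|_{L^2(D_0)}\ge c\,(\log(1/\mathrm{dist}(x_j,D_0)))^{1/2}$, and the hypersingular source blows up faster. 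Hence $\|V_j\|_{H^1(D_0)}\to\infty$, a contradiction, so $\Gamma_{f_1}=\Gamma_{f_2}$, $\mu_1=\mu_2$ and $k_{1,-}=k_{2,-}$.

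\emph{Main obstacle.} The heart of the matter is Steps 2--3: choosing the smooth localizing domain $D_0$ compatibly with $B_{2\delta}(x_0)$ and with the interface; proving that the (possibly $\mu$-modified, partial-boundary-data) interior transmission problem on a sufficiently small $D_0$ is well posed, which requires a case analysis of the degenerate parameter combinations and, when $\mu=1$, the use of hypersingular sources so that the singularity still dominates a possibly weaker a priori bound; and checking that the boundary data on $\partial D_0\setminus\Sigma_0$ is genuinely uniformly bounded in $j$. Step 1 also hinges on the connectedness and analyticity properties of $D^\ast$ and on the reciprocity relation for point sources.
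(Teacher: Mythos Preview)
Your outline follows the same strategy as the paper: extend the data identity to all of $D^+_1\cap D^+_2$, pick $x_0$ on one interface, push point (or hypersingular) sources $x_j\to x_0$, set up an interior transmission problem in a small $C^{2,\lambda}$ domain $D_0$ below that interface, and play off well-posedness against the singularity results of Theorems~\ref{thm3.1}--\ref{thm3.2}. Two points, however, are handled differently in the paper and are not quite right as you have written them.

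\textbf{When to switch to hypersingular sources.} Your rule ``replace by $\widetilde u^i_j$ when $\mu_m=1$'' is the wrong trigger. What matters is the \emph{effective} contrast $\mu_\ast$ in the ITP you construct, not the original $\mu_m$. In particular, once $\Gamma_{f_1}=\Gamma_{f_2}$ and $\mu_1=\mu_2=:\mu$ have been established, the pair $(u^{(j)}_{1,-},u^{(j)}_{2,-})$ satisfies an ITP with $\mu_\ast=\mu_1/\mu_2=1$ even when $\mu\neq1$. With point sources the classical ITP gives only an $L^2$ bound, and $\Phi_0(\cdot,x_j)$ is uniformly bounded in $L^2(D_0)$, so no contradiction results. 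The paper therefore proves $k_{1,-}=k_{2,-}$ (its case~(iii)) \emph{always} with hypersingular sources, and invokes Remark~\ref{remark3.3} precisely to control the extra term $v_j$ in $L^2$ when $\mu\neq1$. Your parenthetical ``Remark~\ref{remark3.3} when $\mu_2=1$'' has this backwards: when $\mu=1$ one has $v_j\equiv0$ and the remark is unnecessary; it is needed exactly when $\mu\neq1$.

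\textbf{The a priori estimate when $\mu_\ast=1$.} For the classical ITP (no contrast in the Neumann data, two different wave numbers), the well-posedness theory you invoke for small $D_0$ yields solutions in $L^2(D_0)\times L^2(D_0)$, with the bound $\|V_j\|_{L^2}+\|W_j\|_{L^2}\le C\|u_j\|_{H^1_\Delta(D_0)}$ for a lift $u_j$ of the Cauchy data (this is how the paper writes it, via \cite{FDH10}). Your claim ``whence $\|V_j\|_{H^1(D_0)}\le C$'' is not what that theory gives; the contradiction in this regime is in $L^2$, using $\|\nabla_x\Phi_0(\cdot,x_j)\cdot n(x_0)\|_{L^2(D_0)}\to\infty$. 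For the \emph{modified} ITP ($\mu_\ast\neq1$) the paper does obtain an $H^1$ bound, but not by shrinking $D_0$: it rewrites the equations as $(\Delta-1)u=-(k^2+1)u$ so that the principal part is coercive for all $D_0$ (\cite[Theorem~6.7]{FD06}), yielding the G{\aa}rding-type estimate \eqref{4.2} with the $L^2$ norms of $V_j,W_j$ on the right, which are then bounded via Theorem~\ref{thm3.1} and $\Phi_0\in L^2$. Your smallness argument can be made to work too, but it is not automatic and you would still need to supply the $L^2$ control on the right-hand side.

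In short: organise the case analysis as the paper does --- first determine $\Gamma$ and $\mu$ (using point sources and the modified ITP when the effective contrast is $\neq1$, hypersingular sources and the classical ITP otherwise), then determine $k_-$ with hypersingular sources and Remark~\ref{remark3.3} regardless of the value of $\mu$ --- and keep straight which ITP gives an $H^1$ bound and which only $L^2$.
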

\begin{proof}
	Following the same procedure in the proof of \cite[Theorem 4.1]{JJB22}, from $u_{1,+}(x,y)=u_{2,+}(x,y)$ for all $x,y\in\Gamma_{c,d}$ we can get that $u_{1,+}(x,y)=u_{2,+}(x,y)$ for all $x,y\in D^+_1\cap D^+_2$. In the following proof, we show $k_{1,-}=k_{2,-}$ in the last part. 
	
	(i)We first consider the case that $\mu_1\neq1$ and $\mu_2\neq1$. We want to prove $\Gamma_{f_1}=\Gamma_{f_2}$ and $\mu_1=\mu_2$ in this case. Suppose $\Gamma_{f_1}\neq\Gamma_{f_2}$, we can find a point $x_0\in\Gamma_{f_1}\setminus\Gamma_{f_2}$. There is no loss of generality if we further let $f_1(x_0)>f_2(x_0)$. Then there exists $\delta>0$ small enough such that $B_{2\delta}(x_0)\cap\Gamma_{f_2}=\emptyset$. Define $$x_j:=x_0-\frac{\delta}{j}n(x_0)\in B_{2\delta}(x_0),~j\in\N. $$
	Since $f_1\in C^{2,\alpha}(\R)$, there exists a small $C^{2,\lambda}$ bounded domain $D_0$ such that $B_{2\delta}(x_0)\cap D_1^-\subset D_0\subset D_1^-\setminus\ov D_2^-$. 
	
	For simplicity, we set $(u^{(j)}_{m,+},u^{(j)}_{m,-})=(u_{m,+}(\cdot,x_j),u_{m,-}(\cdot,x_j))$ and $u_{m,j}=\Phi_{k_+}(\cdot,x_j)+u^{(j)}_{m,+}$ in $D^+_m$ for $m=1,2$ and $j\in\N$. It is seen that $(u^{(j)}_{1,-},u_{2,j})$ satisfy the following interior transmission problem 
	\be\label{4.1}
	\left\{
	\begin{array}{ll}
		\Delta V_j-V_j=h_{1,j}~~~&{\rm in}~D_0,\\[2mm]
		\Delta W_j-W_j=h_{2,j}~~~&{\rm in}~D_0,\\[2mm]
		V_j-W_j=g_{1,j}~~~&{\rm on}~\pa D_0,\\[3mm]
		\displaystyle\rho\frac{\pa V_j}{\pa n}-\frac{\pa W_j}{\pa n}=g_{2,j}~~~&{\rm on}~\pa D_0,
	\end{array}
	\right.
	\en
	with $(V_j,W_j)=(u^{(j)}_{1,-},u_{2,j})$, $\rho=\mu_1$ and 
	\ben
	&&h_{1,j}=-(k_{1,-}^2+1)u^{(j)}_{1,-},\quad h_{2,j}=-(k_+^2+1)u_{2,j}, \\
	&&g_{1,j}=(u^{(j)}_{1,-}-u_{2,j})|_{\pa D_0},\quad g_{2,j}=\left.\left(\displaystyle\mu_1\frac{\pa u^{(j)}_{1,-}}{\pa n}-\frac{\pa u_{2,j}}{\pa n}\right)\right|_{\pa D_0}. 
	\enn
	Clearly, $g_{1,j}=g_{2,j}=0$ on $\widetilde\Gamma:=\pa D_0\cap\Gamma$ by the boundary conditions in the TSP. Since $\mu_1\neq1$, by \cite[Theorem 6.7]{FD06} we obtain that 
	\begin{align}\nonumber
	\|u^{(j)}_{1,-}\|_{H^1(D_0)}+\|u_{2,j}\|_{H^1(D_0)}\leq &\;C\left(\|u^{(j)}_{1,-}\|_{L^2(D_0)}+\|u_{2,j}\|_{L^2(D_0)}\right.\\ \label{4.2}
	&\qquad\left.+\|g_{1,j}\|_{H^{1/2}(\pa D_0)}+\|g_{2,j}\|_{H^{-1/2}(\pa D_0)}\right)
	\end{align}
	with $C>0$ independent of $j\in\N$. It follows from Theorem \ref{thm3.1} and the uniform boundedness of $\Phi_0(x,x_j)$ in $L^2(D_0)$ that $\|u^{(j)}_{1,-}\|_{L^2(D_0)}\leq C$. Due to the positive distance between $B_{2\delta}(x_0)$ and $\Gamma_{f_2}$, we deduce that $\|\Phi_{k_+}(x,x_j)\|_{1,\alpha,\Gamma_{f_2}}+\|\pa_n\Phi_{k_+}(x,x_j)\|_{0,\alpha,\Gamma_{f_2}}\leq C$ and thus $\|u^{(j)}_{2,+}\|_{H^1(D_0)}\leq C$ uniformly for $j\in\N$ by Theorem \ref{thm4.7}, which implies $\|u_{2,j}\|_{L^2(D_0)}\leq C$. Then we show that $g_{1,j}$ and $g_{2,j}$ are uniformly bounded in $H^{1/2}(\pa D_0)$ and $H^{-1/2}(\pa D_0)$ for $j\in\N$, respectively. In fact, by Theorem \ref{thm3.1}, it can be derived that 
	\begin{align*}
	\|g_{1,j}\|_{H^{1/2}(\pa D_0)}+\|g_{2,j}\|_{H^{-1/2}(\pa D_0)}&=\|g_{1,j}\|_{H^{1/2}(\pa D_0\setminus\widetilde{\Gamma})}+\|g_{2,j}\|_{H^{-1/2}(\pa D_0\setminus\widetilde{\Gamma})}\\
	&\leq C\left(\|u^{(j)}_{1,-}\|_{H^1(D_0\setminus\ov{B_{2\delta}(x_0)})}+\|u^{(j)}_{2,+}\|_{H^1(D_0)}\right.\\
	&\qquad\quad\left.+\|\Phi_0(x,x_j)\|_{H^1(D_0\setminus\ov{B_{2\delta}(x_0)})}\right)\\
	&\leq C. 
	\end{align*}
	with $C>0$ independent of $j\in\N$. 
	
	Therefore, by (\ref{4.2}) we have $\|u_{2,j}\|_{H^1(D_0)}\leq C$ uniformly for $j\in\N$, which is a contradiction since $\|u^{(j)}_{2,+}\|_{H^1(D_0)}\leq C$ and $\|\Phi_{k_+}(x,x_j)\|_{H^1(D_0)}\rightarrow\infty$ as $j\rightarrow\infty$. Thus $\Gamma_{f_1}=\Gamma_{f_2}=\Gamma$. 
	
	Next we show $\mu_1=\mu_2$. Suppose not, then $\mu_1/\mu_2\neq1$.  It can be verified that $(u^{(j)}_{1,-},u^{(j)}_{2,-})$ is the solution to the interior tranmission problem \eqref{4.1} with $(V_j,W_j)=(u^{(j)}_{1,-},u^{(j)}_{2,-})$, $\rho=\mu_1/\mu_2$ and 
%	\be\label{4.3}
%	\left\{
%	\begin{array}{ll}
%		\Delta u^{(j)}_{1,-}-u^{(j)}_{1,-}=-(k_{1,-}^2+1)u^{(j)}_{1,-}~~~&{\rm in}~D_0,\\[2mm]
%		\Delta u^{(j)}_{2,-}-u^{(j)}_{2,-}=-(k_{2,-}^2+1)u^{(j)}_{2,-}~~~&{\rm in}~D_0,\\[2mm]
%		u^{(j)}_{1,-}-u^{(j)}_{2,-}=\tilde g_{1,j}~~~&{\rm on}~\pa D_0,\\[3mm]
%		\displaystyle\frac{\mu_1}{\mu_2}\frac{\pa u^{(j)}_{1,-}}{\pa n}-\frac{\pa u^{(j)}_{2,-}}{\pa n}=\tilde g_{2,j}~~~&{\rm on}~\pa D_0,
%	\end{array}
%	\right.
%	\en
%	with the boundary data 
	\ben
	&&h_{1,j}=-(k_{1,-}^2+1)u^{(j)}_{1,-},\quad h_{2,j}=-(k_{2,-}^2+1)u^{(j)}_{2,-}, \\
	&&g_{1,j}=(u^{(j)}_{1,-}-u^{(j)}_{2,-})|_{\pa D_0},\quad g_{2,j}=\left.\left(\displaystyle\frac{\mu_1}{\mu_2}\frac{\pa u^{(j)}_{1,-}}{\pa n}-\frac{\pa u^{(j)}_{2,-}}{\pa n}\right)\right|_{\pa D_0}. 
	\enn
	Also, $g_{1,j}=g_{2,j}=0$ on $\widetilde{\Gamma}$. Arguing analogously as above, again from \cite[Theorem 6.7]{FD06} and Theorem \ref{thm3.1}, it is obtained that $\|u^{(j)}_{1,-}\|_{H^1(D_0)}\leq C$ uniformly for $j\in\N$, which however contradicts with Theorem \ref{thm3.1}. Hence, $\mu_1=\mu_2$. 
	
	(ii)Now we consider the case $\mu_1=1$ or $\mu_2=1$. Without loss of generality, we let $\mu_1=1$. Suppose $\Gamma_{f_1}\neq\Gamma_{f_2}$. Then we can find a $x_0$ such that one of the following is satisfied: \\ (a)$x_0\in\Gamma_{f_1}\setminus\Gamma_{f_2}$ and  $f_1(x_0)>f_2(x_0)$; 
	(b)$x_0\in\Gamma_{f_2}\setminus\Gamma_{f_1}$ and  $f_2(x_0)>f_1(x_0)$. \\
	As in (i) we also have $x_0,\delta,x_j$ and $D_0$ with the same properties. Denote by $(\widetilde u^{(j)}_{m,+},\widetilde u^{(j)}_{m,-})$ the solutions to the TSP with respect to the incident hypersingular point source $\widetilde u^i_j=\na_x\Phi_{k_+}(x,x_j)\cdot n(x_0)$ for $m=1,2$ and $j\in\N$. Set $\wid u_{m,j}=\na_x\Phi_{k_+}(x,x_j)\cdot n(x_0)+\wid u^{(j)}_{m,+}$ in $D^+_m$.  Again from the proof of \cite[Theorem 4.1]{JJB22}, we can get that $\wid u^{(j)}_{1,+}(x)=\wid u^{(j)}_{2,+}(x)$ for all $x\in D^+_1\cap D^+_2$ and $j\in\N$. Note that this also holds when $\mu_1=\mu_2\neq1$. 
	
	In case (a), analog to (\ref{4.1}), $(\wid u^{(j)}_{1,-},\wid u_{2,j})$ solves the following another type interior transmission problem 
	\be\label{4.4}
	\left\{
	\begin{array}{ll}
		\Delta\wid u^{(j)}_{1,-}+k_{1,-}^2\wid u^{(j)}_{1,-}=0~~~&{\rm in}~D_0,\\[2mm]
		\Delta\wid  u_{2,j}+k_+^2\wid u_{2,j}=0~~~&{\rm in}~D_0,\\[2mm]
		\wid u^{(j)}_{1,-}-\wid u_{2,j}=\hat g_{1,j}~~~&{\rm on}~\pa D_0,\\[3mm]
		\displaystyle\frac{\pa\wid u^{(j)}_{1,-}}{\pa n}-\frac{\pa\wid u_{2,j}}{\pa n}=\hat g_{2,j}~~~&{\rm on}~\pa D_0,
	\end{array}
	\right.
	\en
	with $\hat g_{1,j}=u^{(j)}_{1,-}-u_{2,j}$ and $\hat g_{2,j}=\pa_n u^{(j)}_{1,-}-\pa_n u_{2,j}$ satisfying $\hat g_{1,j}=\hat g_{2,j}=0$ on $\wid\Gamma$. Define $u_j=(1-\chi_1)(\wid u^{(j)}_{1,-}-\wid u_{2,j})$, where $\chi_1$ is a cut off function such that $\chi_1\in C_0^\infty(\R^2)$, $0\leq\chi_1\leq1$ and 
	\begin{align*}
		\chi_1(x)=\left\{
		\begin{array}{ll}
			0, & {\rm in}~\R^2\setminus\ov{B_{2\delta}(x_0)}, \\[2mm]
			1, & {\rm in}~B_{3\delta/2}(x_0). 
		\end{array}
		\right.
	\end{align*}
	Then we see that $u_j\in H^1_\Delta(D_0)$ and $u_j=\hat g_{1,j}$, $\pa_n u_j=\hat g_{2,j}$ on $\pa D_0$. Since $k_+\neq k_{1,-}$ under the conditions in Theorem \ref{thm4.7}, by the classic results in \cite{FDH10}, we know that we can let $D_0$ be sufficiently small such that the problem (\ref{4.4}) is well-posed in $L^2(D_0)\times L^2(D_0)$ and 
	\ben
	\|\wid u^{(j)}_{1,-}\|_{L^2(D_0)}+\|\wid u_{2,j}\|_{L^2(D_0)}\leq C\|u_j\|_{H^1_\Delta(D_0)}. 
	\enn
	We claim now that $\|u_j\|_{H^1_\Delta(D_0)}\leq C$ uniformly for $j\in\N$. It suffices to show that $\|\wid u^{(j)}_{1,-}\|_{H^1(D_0\setminus\ov{ B_{3\delta/2}(x_0)})}+\|\wid u_{2,j}\|_{H^1(D_0\setminus\ov{ B_{3\delta/2}(x_0)})}\leq C$, which is a direct application of Theorem \ref{thm3.2} and the positive distance between $B_{2\delta}(x_0)$ and $\Gamma_{f_2}$. Hence, we get $\|\wid u^{(j)}_{1,-}\|_{L^2(D_0)}\leq C$ and thus $\|\na_x\Phi_0(x,x_j)\cdot n(x_0)\|_{L^2(D_0)}\leq C$ by Theorem \ref{thm3.2}, which is a contradiction. Therefore, $\Gamma_{f_1}=\Gamma_{f_2}=\Gamma$. Then following similarly as the arguments in (i), we can obtain that $\mu_2=\mu_1=1$. 
	
	In case (b), if $\mu_2\neq 1$, for $(u_{1,j},u_{2,-}^{(j)})$ we can construct interior transmission problem in the form of (\ref{4.1}) to derive a contradiction. Thus in this case we still have $\mu_2=\mu_1=1$. Then arguing analogously as in case (a) we can deduce that $\Gamma_{f_1}=\Gamma_{f_2}=\Gamma$. The detailed proof is omitted. 
	
	(iii)Finally we show $k_{1,-}=k_{2,-}$. Suppose $k_{1,-}\neq k_{2,-}$. Again we see that $(\wid u^{(j)}_{1,-},\wid u^{(j)}_{2,-})$ satisfy the interior transmission problem similar to \eqref{4.4} 
%	\be\label{4.5}
%	\left\{
%	\begin{array}{ll}
%		\Delta\wid u^{(j)}_{1,-}+k_{1,-}^2\wid u^{(j)}_{1,-}=0~~~&{\rm in}~D_0,\\[2mm]
%		\Delta\wid  u^{(j)}_{2,-}+k_{2,-}^2\wid u^{(j)}_{2,-}=0~~~&{\rm in}~D_0,\\[2mm]
%		\wid u^{(j)}_{1,-}-\wid u^{(j)}_{2,-}=h_{1,j}~~~&{\rm on}~\pa D_0,\\[3mm]
%		\displaystyle\frac{\pa\wid u^{(j)}_{1,-}}{\pa n}-\frac{\pa\wid u^{(j)}_{2,-}}{\pa n}=h_{2,j}~~~&{\rm on}~\pa D_0,
%	\end{array}
%	\right.
%	\en
	where the boundary data $\hat g_{1,j}=\wid u^{(j)}_{1,-}-\wid u^{(j)}_{2,-}$ and $\hat g_{2,j}=\pa_n\wid u^{(j)}_{1,-}-\pa_n\wid u^{(j)}_{2,-}$ with $\hat g_{1,j}=\hat g_{2,j}=0$ on $\widetilde{\Gamma}$. Following the previous argument in (ii), we then still get the contradiction that $\|\na_x\Phi_0(x,x_j)\cdot n(x_0)\|_{L^2(D_0)}\leq C$ uniformly for $j\in\N$ by Theorem \ref{thm3.2}. 
	%and Remark \ref{remark3.3}. 
	Thus, $k_{1,-}=k_{2,-}$. The proof is finally complete. 
\end{proof}

\section*{Acknowledgements}
This work was supported by the NNSF of China with grant 12122114.

\end{document}